\documentclass[10pt]{article}
\usepackage{graphicx}
\usepackage{latexsym}
\usepackage{xcolor}
\usepackage{amsmath,amsthm,amssymb}
\usepackage{mathrsfs}
\usepackage[hypcap]{caption}
\usepackage[all]{xy}
\usepackage[textwidth=14.5cm]{geometry}

\usepackage{tabularx}
\makeatletter
 
 \@addtoreset{equation}{section}
\makeatother
\makeatletter
\newcommand{\subjclass}[2][1991]{%
  \let\@oldtitle\@title%
  \gdef\@title{\@oldtitle\footnotetext{#1 \emph{Mathematics subject classification.} #2}}%
}
\newcommand{\keywords}[1]{%
  \let\@@oldtitle\@title%
  \gdef\@title{\@@oldtitle\footnotetext{\emph{Key words and phrases.} #1.}}%
}
\makeatother

\theoremstyle{plain}
\newtheorem{df}{\sc \bf Definition}[section]
\newtheorem{thm}[df]{\sc \bf Theorem}

\newtheorem{prop}[df]{\sc \bf Proposition}
\newtheorem{lem}[df]{\sc \bf Lemma}
\newtheorem{cor}[df]{\sc \bf Corollary}
\newtheorem*{conj}{\sc \bf Conjecture}
\theoremstyle{remark}
\newtheorem{rem}[df]{\sc Remark}


\newcommand{\R}{\boldsymbol{R}}

\newcommand{\C}{\boldsymbol{C}}

\newcommand{\m}{\mathfrak{m}}
\newcommand{\g}{\mathfrak{g}}

\newcommand{\da}{\mathfrak{a}}
\newcommand{\db}{\mathfrak{b}}
\newcommand{\sll}{\mathfrak{sl}}

\newcommand{\PL}{PL}
\newcommand{\GL}{GL}

\usepackage{enumerate}
\usepackage{multirow,bigdelim}
\usepackage{delarray}

\setcounter{page}{1}            
\begin{document}
\title{Castling transformations of projective structures}


\subjclass[2010]{Primary 53B10, 11S90; Secondary 53C10} 
\keywords{projective structure; Grassmannian structure; prehomogeneous vector space}
\date{\vspace{-5ex}}
\author{Hironao Kato   
\thanks{The author is supported by JSPS and JSPS Strategic Young Researcher Overseas Visits Program for  Accelerating Brain Circulation.}}


\maketitle
\begin{abstract}
We construct an infinite sequence of projectively flat manifolds by using castling transformations of prehomogeneous vector spaces.   
We also give a classification of manifolds equipped with a flat projective structure obtained by a finite number of castling transformations, and describe these flat projective structures by atlases.  

\ 
\end{abstract}

\vspace{-5mm}
\section{Introduction} 
A flat Grassmannian structure of type $(\beta, \alpha)$ on a manifold $M$ is a maximal atlas $\{(U_a, \varphi_a)\}_{a \in A}$ of $M$ whose charts $\varphi_a$ take values in the Grassmannian manifold 
$Gr_{\alpha, \alpha + \beta}$ and coordinate changes $\varphi_b \circ \varphi_a^{-1}$ belong to the projective linear group $PL(\alpha + \beta) := GL(\alpha + \beta)/R^* I$. 
When $\alpha =1$, this notion gives a definition of flat projective structures on $M$. Obviously the projective spaces admit a flat projective structure. 
The classification of manifolds admitting a flat projective structure is still widely open 
(cf. \cite[chapter 6]{ovsienko-tabachnikov}) and active area. 
Indeed, recently in \cite{goldman-cooper} it has been proved that a connected sum $\R P^3 \# \R P^3$ does not admit a flat projective structure. 
In our last paper \cite{hkato}, we proved that invariant flat complex projective structures on complex Lie groups 
correspond to certain infinitesimal prehomogeneous vector spaces. 

In the theory of prehomogeneous vector spaces there is a notion of castling transformations, 
which is a certain transformation of linear representations of algebraic groups preserving the prehomogeneity.  
In this paper we  establish a transformation of manifolds equipped with 
a projective structure as a generalization of castling transformations.  
As castling transformations preserve the prehomogeneity of representations,  
our castling transformations of projective structures preserve the projectively flatness. 
Moreover, since we can repeat a castling transformation,   
we can construct a sequence of projectively flat manifolds from a given projectively flat manifold. 
In fact we prove the following: 
Let $\{(U_a, \varphi_a)\}_{a \in A}$ be a flat Grassmannian structure of type $(\beta, \alpha)$ on $M$. 
Assume $\alpha + \beta \geq 3$ and $\alpha \leq \beta$. 

\medskip
\begin{thm}\label{cor of thm} 
By a finite number of castling transformations from $\{(U_a, \varphi_a)\}_{a \in A}$ we obtain a projectively flat manifold $N$, which is a principal fiber bundle over $M$.    
There is a one-to-one correspondence between the set of 
structure groups $\prod_{i=1}^j \PL(k_i)$ of $N$  and 
the set of solutions $(k_1, \ldots, k_j)$ of the Grassmannian type equation 
\vspace{-1mm}
\[(\ast) \ \ \alpha \beta + k_1^2 + \cdots + k_j^2 - j  - (\alpha + \beta) k_1 \cdots k_j +1 = 0 \] 
satisfying $k_i \geq \alpha$ \mbox{\rm(}$1 \leq  i \leq j$\mbox{\rm)} and $j \geq 1$. 
\end{thm}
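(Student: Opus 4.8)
The plan is to translate the geometric statement into the arithmetic of the prehomogeneous vector space underlying $(Q,\omega)$, read off $(\ast)$ as a dimension-matching identity, and then propagate it through the castling moves by a Vieta-type involution. First I would fix the dictionary. The flat Grassmannian structure of type $(\beta,\alpha)$ has flat model the Grassmannian $Gr(\alpha,\alpha+\beta)$, so $\dim M=\alpha\beta$, and its tangent prehomogeneous vector space is $(GL(\alpha)\times GL(\beta),\ \C^{\alpha}\otimes\C^{\beta})$. Via the correspondence of \cite{hkato} together with the castling transformation of Grassmannian Cartan connections defined above, a finite sequence of castling transformations of $(Q,\omega)$ produces the tensor prehomogeneous vector space $(GL(\alpha+\beta)\times GL(k_1)\times\cdots\times GL(k_j),\ V)$ with $V=\C^{\alpha+\beta}\otimes\C^{k_1}\otimes\cdots\otimes\C^{k_j}$, whose open orbit carries the flat projective structure modelled on $\mathbb{P}(V)$. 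The associated manifold $N$ is the corresponding bundle, and one checks that its fibre group is $\prod_{i=1}^{j}PL(k_i)$, so that $N\to M$ is a principal $\prod_i PL(k_i)$-bundle; projective flatness is automatic because castling preserves prehomogeneity, as recalled in the introduction.

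Second, I would derive $(\ast)$ as the compatibility of two computations of $\dim N$. On one hand $\dim N=\dim M+\dim\prod_i PL(k_i)=\alpha\beta+\sum_{i=1}^{j}(k_i^{2}-1)$. On the other hand $N$ is locally modelled on $\mathbb{P}(V)$, so $\dim N=\dim\mathbb{P}(V)=(\alpha+\beta)k_1\cdots k_j-1$. Equating the two gives precisely
\[
\alpha\beta+\sum_{i=1}^{j}k_i^{2}-j-(\alpha+\beta)k_1\cdots k_j+1=0,
\]
which is $(\ast)$; hence any $N$ arising this way determines a solution with $k_i\geq\alpha$, the inequality recording that each castled factor must be at least the size of the smaller Grassmannian index. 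This produces the map from attainable structure groups to solutions.

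Third, for surjectivity and injectivity I would exploit that $(\ast)$ is symmetric in $k_1,\dots,k_j$ and quadratic in each variable: freezing all but $k_l$, the two roots satisfy by Vieta $k_l+\bar k_l=(\alpha+\beta)\prod_{i\neq l}k_i$ and $k_l\bar k_l=\alpha\beta+\sum_{i\neq l}k_i^{2}-j+1$, and I would verify that the single castling transformation of the $l$-th factor is exactly the involution $k_l\mapsto\bar k_l=(\alpha+\beta)\prod_{i\neq l}k_i-k_l$ (here $(\alpha+\beta)\prod_{i\neq l}k_i$ is the dimension of the complementary module being castled). Together with the elementary move that adjoins a trivial factor $k_{j+1}=1$, which sends a solution of the $j$-variable equation to one of the $(j+1)$-variable equation and whose castle is $k_{j+1}=(\alpha+\beta)k_1\cdots k_j-1$, these generate all castling. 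A Vieta descent on the complexity $k_1\cdots k_j$, always flipping the largest coordinate downward while it stays $\geq\alpha$, then reduces an arbitrary solution to a minimal one, which I would identify with the base case $j=1$, where $(\ast)$ factors as $(k_1-\alpha)(k_1-\beta)=0$ and recovers $(Q,\omega)$ itself. This shows every solution is realised by a finite castling tower and that distinct solutions yield distinct structure groups, giving the bijection.

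The main obstacle will be the third step, and within it two points: matching the Cartan-geometric castling of $(Q,\omega)$ with the algebraic castling of the tensor prehomogeneous vector space so that it literally implements the Vieta involution, and proving that the descent is well-founded and terminates only at the base solutions with no spurious minimal ones — in particular that the constraint $k_i\geq\alpha$ is preserved along the reduction and characterises exactly the reachable tuples. This is where the hypotheses $\alpha\leq\beta$ and $\alpha+\beta\geq3$ enter, guaranteeing that the largest coordinate strictly decreases under the downward flip and that the minimal configuration is unique, so that the castling tree over $(Q,\omega)$ sweeps out the whole solution set of $(\ast)$ exactly once.
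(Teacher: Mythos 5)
Your plan is essentially the paper's own proof: the bundle structure and projective flatness of $N$ are the content of Propositions \ref{cast of geo} and \ref{base space of successive Cartan conne}, the equation $(\ast)$ is precisely the open-orbit dimension identity you write down, and the bijection is established by the same Vieta-type descent (castling at the $l$-th slot is $k_l\mapsto(\alpha+\beta)\prod_{i\neq l}k_i-k_l$, the second root of $(\ast)$ viewed as a quadratic in $k_l$), culminating in Theorem \ref{determanaiton of successive castling transform}. The crux you flag but do not carry out --- that flipping the largest coordinate of a solution with $k_i\geq 2$ and $k_j\geq\alpha$ lands strictly in $(0,k_j)$, so the descent is well-founded and terminates only at the seed $j=1$, $k_1\in\{\alpha,\beta\}$ --- is exactly Proposition \ref{fund prop of cast 2}, whose case-by-case inequalities are where the hypotheses $\alpha\leq\beta$ and $\alpha+\beta\geq 3$ actually enter.
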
 

\medskip
The projectively flat manifold $N$ is described by using atlases in the last section.  
The case $\alpha = 1$ corresponds to the assumption that $M$ admits a flat projective structure. 
Thus from any projectively flat manifold $M$, we can obtain a projectively flat principal fiber bundle $N$ over $M$ with group $\prod_{i=1}^j \PL(k_i)$ satisfying the equation ($\ast$).    
Furthermore the theorem yields  
a sequence of projectively flat manifolds, which are connected by manifolds equipped with a flat Grassmannian structure.  
Each flat projective structure on $N$ is right invariant under the action of 
$\prod_{i=1}^j \PL(k_i)$. 

We note that a flat projective structure exists on $M$ iff a projectively flat affine connection exists on $M$. Thus a flat affine connection induces a flat projective structure.  
However about the existence problem there is the following obstruction: A simply connected compact manifold admitting a flat projective 
structure is diffeomorphic to the sphere $S^n$ (see \cite{kobayashi-nagano}). 
Thus the manifold $\prod_{i=1}^j S^{n_i}$ ($n_i, j \geq 2$) does not admit any flat projective structure. 
This point distinguishes flat projective structures from flat affine connections and flat Riemannian metrics 
as any product of flat affine (resp. Riemannian) manifolds  is a flat affine (resp. Riemannian) manifold again. 
However, in \cite{hkato} we obtained a real Lie algebra $\sll(k_1) \times \cdots \times \sll(k_j)$ with a certain condition whose corresponding real Lie group admits a invariant flat real projective structure.  
Another aim of this paper is to generalize these examples from the view point of Grassmannian structures. 

The paper is organized as follows. 
First of all we review the Grassmannian structures in $\S$ 1 and 
establish a castling transformation of projective structures by using 
Cartan connections in $\S$ 2 and $\S$ 3.  
In $\S$ 4 we investigate the base spaces obtained by successive castling transformations and describe 
a relation between base spaces.  
$\S$ 5 is devoted to some examples of base spaces. 
In $\S$ 6 we investigate the positive integer solutions of the Grassmannian type equation ($\ast$), and give one  conjecture.  In $\S$ 7 we describe flat projective structures constructed in Theorem \ref{cor of thm} by using atlases. 

\section{Preliminaries} 
{\bf 2.1 \ Grassmannian structures and projective structures} 

\medskip
\noindent
Throughout this paper by a manifold we mean a $C^\infty$ real manifold.  
We recall the notion of Grassmannian structures and projective structures to establish castling transformations 
in the differential geometry. 
Let $M$ be a real manifold of dimension $r$. 
Denote by $\mathcal{L}(M)$ a bundle of linear flames of $M$ and we regard an element of $\mathcal{L}(M)$ 
as a linear isomorphism $\R^r \to T_pM$.   
We identify $\R^r$ with $\R^n \otimes \R^m$ and  
consider a $\GL(n) \otimes \GL(m)$-structure $P_tM$, i.e. a subbundle of $\mathcal{L}(M)$ with structure group $\GL(n) \otimes \GL(m)$.  
If we have $n, m \geq 2$, 
we call $P_tM$ a Grassmannian structure of type $(n, m)$ on $M$ in this paper.  
Note that if $n = 1$ or $m = 1$, then $P_tM = \mathcal{L}(M)$. Put $l := m + n$.  
There are various names and definitions.  
In \cite{hangan} and \cite{ishihara} a $\GL(n) \otimes \GL(m)$-structure is called 
a tensor product structure. 
On the other hand in \cite{sato-machida}, an isomorphism  
$\sigma: TM \to V \otimes W$ itself is called a Grassmannian structure, where $V$ and $W$ are vector bundles with rank $n$ and $m$ over $M$ ($n, m \geq 2$).  Such an isomorphism $\sigma$ gives a $GL(n) \otimes GL(m)$-structure in a natural 
manner, however the author does not know whether the converse is true.  
Typical examples admitting a Grassmannian structure are Grassmannian manifolds 
(see \cite{sato-machida} for other examples). Denote by  $Gr_{m, l}$ a Grassmannian 
manifold consisting of $m$-dimensional subspaces in the $l$-dimensional real vector space $W$. 
The real projective transformation group $PL(W)$ acts on $Gr_{m, l}$ transitively. 
Let $\{a_1, \ldots, a_{l}\}$ be a linear basis of $W$. 
With respect to the basis $\{a_i\}_{i=1}^l$ the group $PL(W)$ is expressed as the quotient $GL(l)/\R^* I_{l}$, 
which we denote by $PL(l)$.  
Note that now the basis $\{a_1, \ldots, a_{l}\}$ is identified with the natural basis of $\R^l$. 
Let $v$ be a linear frame $(a_1, \cdots, a_{m})$.  
We denote by $<\!v\!>$ the $m$-dimensional subspace spanned by $v$. 
Let $PL(l)_{<v>}$ be the isotropy  
subgroup at $<\!v\!>$. Then we have $Gr_{m, l} = PL(l)/\PL(l)_{<v>}$. The Lie algebra of $PL(l)$ is isomorphic to 
$\sll(l)$, which has the graded decomposition $\sll(l) = \g_{-1} + \g_0 + \g_{1}$ given by 
\begin{eqnarray*}
\g_{-1} \! \! &=& 
\left\{ 
      \left.
           \begin{pmatrix}
                 0 & 0 \\
                 C & 0 
           \end{pmatrix}  
      \right|
          C \in M(n,m) 
\right\}, \\ 
\g_0 &=&
\left\{
     \left.
           \begin{pmatrix}
                 A & 0 \\
                 0 & B 
           \end{pmatrix}  
      \right|
          \begin{array}{l}
            A \in \mathfrak{gl}(m), \ B \in \mathfrak{gl}(n) \\
            \mathrm{tr}(A + B) = 0 
          \end{array}  
\right\}, \\
\g_1 &=&
\left\{
      \left.
           \begin{pmatrix}
                 0  & D \\
                 0 & 0 
           \end{pmatrix}  
      \right|
         D \in M(m,n) 
\right\}.
\end{eqnarray*}    
The vector space $\R^n \otimes {\R^m}$ is naturally identified with $\g_{-1}$ and  
the isotropy representation $\rho: \PL(l)_{<v>} \to \GL(g_{-1})$  
is given by 
\[\rho: 
\begin{pmatrix}
       A & C \\
       0 & B 
\end{pmatrix}
\mapsto 
B \otimes {}^tA^{-1}.
\] 
Thus the image of $\rho$ is the group  
$\GL(n) \otimes \GL(m)$. 
The isotropy representation $\rho$ enables us to identify 
$GL(n) \otimes GL(m)$ with the subgroup $G_0$ of $\PL(l)$; 
\[G_0 = \left\{\left. \begin{pmatrix} 
                 A & 0 \\ 
                 0 & B 
           \end{pmatrix} \right|  A \in \GL(m), B \in \GL(n)\right\}.\]   
Thus we obtain the imbedding $\imath: GL(n) \otimes GL(m) \to \PL(l)_{<v>}$, which is defined by 
$A \otimes B \mapsto 
\begin{pmatrix}
      {}^t B^{-1} & 0 \\ 
        0 & A 
\end{pmatrix}$. 
Moreover the Lie algebra  
$gl(n)\otimes I_m + I_n \otimes gl(m)$ is identified with $\g_0$. 


Here we recall the notion of ($PL(l)$, $Gr_{m,l}$)-structures on $M$. A ($PL(l)$, $Gr_{m,l}$)-structure on $M$ is a maximal atlas $\{(U_a, \varphi_a)\}_{a \in A}$ of $M$ 
satisfying the following condition (cf. \cite{goldman3}, \cite{hkato}): 

(1) $\{U_a\}_{a \in A}$ is an open covering of $M$,

(2) $\varphi_a$ maps $U_a$ diffeomorphically onto an open subset of $Gr_{m,l}$, 

(3) for every pair 
$(b, a)$ with $U_a \cap U_b \neq \emptyset$ and each connected component 

\quad \ $C$ of $U_a \cap U_b$,  
$\varphi_b \circ \varphi_a^{-1} \! \! \mid_{\varphi_a(C)}$ is given by an element of $PL(l)$. 

\noindent 
We call a $(PL(l), Gr_{m,l})$-structure a flat Grassmannian structure of type $(n, m)$. A flat Grassmannian structure of type $(n, 1)$ is nothing but a flat projective structure. 

Now we introduce the notion of Grassmannian Cartan connections. Let $Q$ be a principal $\PL(l)_{<v>}$-bundle over $M$ and $\omega$ be a $\sll(l)$-valued 1-form on $Q$.  
Then the pair $(Q, \omega)$ is called a Grassmannian Cartan connection of type $(n, m)$ on $M$ if the following conditions are satisfied: 
\smallskip

(1) $\omega$: $T_uQ \to \sll(l)$ gives a linear isomorphism,  

(2) $R_g^*\omega = Ad(g^{-1})\omega$ for $g \in \PL(l)_{<v>}$, 

(3) $\omega(A^*) = A$  ($A \in \sll(l)_{<v>}$), where $A^*$ is the fundamental vector field.  

\medskip 
A $\sll(l)$-valued 2-form $\Omega$ on $Q$ defined by $\Omega = d\omega + \frac{1}{2}[\omega, \omega]$ is 
called a curvature form. A Grassmannian Cartan connection $(Q, \omega)$ is said to be flat if $\Omega = 0$. 
Now we recall there is the following one-to-one correspondence (cf. \cite{hkato}): 
\begin{eqnarray*}
&& \{\mbox{$(PL(l), Gr_{m,l})$-structures on $M$}\}  \\
&& \to \{\mbox{flat Grassmannian Cartan connections of type $(n, m)$ on $M$}\}/_\sim.
\end{eqnarray*} 
The equivalence relation of the latter set denotes the isomorphisms of Cartan connections. 
Generally a Grassmannian Cartan connection $(Q, \omega)$ of type $(n, m)$ over $M$ induces a $\GL(n) \otimes \GL(m)$-structure of $M$ 
as follows (cf. \cite[p.135]{tanaka2}).  
Let $\rho: PL(l)_{<v>} \to GL(g_{-1})$ be the isotropy representation. We denote the kernel of $\rho$ by $ker\rho$. 
Then $PL(l)_{<v>}/{ker\rho} \cong  GL(n) \otimes GL(m)$.  
Thus the quotient manifold $\widetilde{Q} := Q/{ker\rho}$ is regarded as a principal fiber bundle over $M$ with structure group 
$GL(n) \otimes GL(m)$. Let $\omega_{-1}$ (resp. $\omega_{0}$) be the $\g_{-1}$ (resp. $\g_{0}$) component of the 1-form $\omega$.  
By using the natural projection $\rho: Q \to \widetilde{Q}$, we obtain the $\g_{-1}$-valued 1-form $\theta$ 
on $\widetilde{Q}$ defined by $\rho^* \theta = \omega_{-1}$.  Then ($\widetilde{Q}$, $\theta$) can be regarded as 
a $GL(n) \otimes GL(m)$-structure and its canonical form. 
We define an injection 
$\iota: GL(n) \otimes GL(m) \hookrightarrow PL(l)_{<v>}$ by 
\[\iota(A \otimes B) =  
\begin{pmatrix} 
       {}^t B^{-1} & 0 \\
       0 & A 
\end{pmatrix}.\]  
The homogeneous space $PL(l)_{<v>}/\iota(GL(n) \otimes GL(m))$ is homeomorphic to $\g_{1}$, and 
hence there exists a bundle homomorphism $h:\widetilde{Q} \hookrightarrow Q$ corresponding to $\iota$ such that 
$\rho \circ h = id$.  Then by Proposition 7.3 of \cite{tanaka1} we can obtain the connection form $\chi$ defined by 
\[\chi (X) = \omega(h_*X)_{0} \quad (X \in TP_tM). \] 
Thus we obtain the following map 
\[\Phi \!:\! \{\mbox{Grassmannian Cartan connections of type $(n, m)$ on $M$} \}/_\sim \to   
\{ (P_tM, [\chi])\},\]  
where $[\chi]$ denotes a certain equivalence class of $\chi$ defined in \cite[p.128]{tanaka1}. 
Especially when $m \neq 1$ and $n \neq 1$, there is a one-to-one correspondence between 
the set of the isomorphism classes of normal Grassmannian Cartan connections over $M$ 
and the set of $\GL(n) \otimes \GL(m)$-structures on $M$ (see section 9 and Theorem 10.2 of \cite{tanaka1}).  

  
Now we consider the case $m = 1$.  
A Grassmannian Cartan connection $(Q, \omega)$ of type $(n, 1)$ is called a projective Cartan connection. 
Especially a normal projective Cartan connection induces   
a projective equivalence class 
of torsion-free linear connections $(\mathcal{L}(M), [\chi])$, which we call a projective structure.  
Now assume $n > 1$.  By the restriction of $\Phi$ to the normal case with $m = 1$   
gives the following one-to-one correspondence:  
\begin{eqnarray*}
\Phi_{m=1}: \hspace{-0.5cm} &&\{\mbox{normal projective Cartan connections on $M$}\}/_\sim \\ 
&&\to \{\mbox{projective structures on $M$}\}. 
\end{eqnarray*} 
For more details of projective structures we refer the reader to \cite{tanaka3}, \cite{tanaka1}, \cite{nomizu-sasaki} and 
\cite{Agaoka}. 
A projective structure $[\chi]$ on $M$ is said to be projectively flat if $\chi$ is locally projectively equivalent to a flat affine connection.
The map $\Phi_{m=1}$ is restricted to the bijective between the set of flat projective Cartan connections and the set of projective structures which are projectively flat. 
(cf. Theorem 9.2 in \cite{tanaka1} and Proposition 1.5.2 in \cite{cap-slovak}).  
The existence of a flat Grassmannian structure of type $(n, m)$ ($n, m \geq  2$) on $M$ should be also described by the terminology of $\GL(n) \otimes \GL(m)$-structure.  However the author does not know it.  

\medskip
\noindent
{\bf 2.2 \ Subgeometry}

\medskip 
\noindent 
For the later argument, we introduce the notion of subgeometry, following \cite{goldman}.  
Let $A/B$ and $A'/B'$ be real homogeneous spaces. We say that $A/B$ is a subgeometry of $A'/B'$ if there exists a Lie group homomorphism $F:A \to A'$ satisfying the following conditions: 

(1) $F(B) \subset B'$, 

(2) the induced map $\hat{F}: A/B \to A'/B'$ is a local diffeomorphism.  

Let us denote the Lie algebra of $A$ by $\da$, the one of $B$ by $\db$. 
Likewise we define $\da'$ and $\db'$ for $A'$ and $B'$ respectively. 
Let $\Lambda$ and $\Lambda'$ be Maurer-Cartan forms of $A$ and $A'$. 
Then $F$ gives a bundle homomorphism corresponding to $F|_B: B \to B'$ and satisfies 
$F^* \Lambda' = dF \circ \Lambda$. 

\smallskip
\begin{prop}\label{inducing cartan connection} 
Let $(Q, \omega)$ be a Cartan connection of type $A/B$ on $M$. 
Then there exists a Cartan connection $(Q', \omega')$ of type $A'/B'$ on $M$.  
\end{prop}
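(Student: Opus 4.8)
\medskip
\noindent
\emph{Proof proposal.} The plan is to realize $(Q',\omega')$ as the extension of the given geometry along the homomorphism $F$. First I would form the associated bundle $Q' := Q\times_B B' = (Q\times B')/B$, where $B$ acts by $(u,b')\cdot b = (ub,\,F(b)^{-1}b')$; with the right action $[u,b']\cdot c = [u,b'c]$ this is a principal $B'$-bundle over the same base $M$. The map $\iota: Q\hookrightarrow Q'$, $u\mapsto[u,e]$, is then $B$-equivariant in the sense $\iota(ub)=\iota(u)\,F(b)$, and differentiating gives $\iota_*(A^*)=(dF(A))^*$ for $A\in\db$. Note that $\dim Q'=\dim M+\dim B'$.

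To produce $\omega'\in\Omega^1(Q',\da')$ I would write the $\da'$-valued form $\mathrm{pr}_{B'}^*\mu' + \Ad((b')^{-1})\,dF(\mathrm{pr}_Q^*\omega)$ on $Q\times B'$, where $\mu'$ is the Maurer--Cartan form of $B'$ and $\mathrm{pr}_Q,\mathrm{pr}_{B'}$ are the two projections. A direct computation using $\omega(A^*)=A$ shows this form is horizontal for the above $B$-action, and $R_b^*\omega=\Ad(b^{-1})\omega$ together with $dF\circ\Ad(b)=\Ad(F(b))\circ dF$ shows it is $B$-invariant; hence it descends to a form $\omega'$ on $Q'$, characterized equivalently by $\iota^*\omega'=dF\circ\omega$ and $B'$-equivariance. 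Condition (2), $R_c^*\omega'=\Ad(c^{-1})\omega'$ for $c\in B'$, then holds by construction, and condition (3) follows because the summand $\mathrm{pr}_{B'}^*\mu'$ yields $\omega'(X^*)=X$ for every $X\in\db'$, consistently with $\omega'((dF(A))^*)=dF(A)$ on the verticals coming from $\iota$.

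The decisive point is condition (1), that $\omega'_{u'}:T_{u'}Q'\to\da'$ is a linear isomorphism, and this is exactly where the hypothesis that $\hat F$ is a local diffeomorphism is used. By $B'$-equivariance it suffices to check this at $u'=\iota(u)$. There the image of $\omega'$ contains $dF(\omega(T_uQ))=dF(\da)$ (since $\omega_u$ is onto $\da$) and contains $\db'$ (by condition (3)); because $d\hat F:\da/\db\to\da'/\db'$ is an isomorphism we have $dF(\da)+\db'=\da'$, so $\omega'_{\iota(u)}$ is surjective. The same local-diffeomorphism hypothesis gives $\dim A/B=\dim A'/B'$, whence $\dim Q'=\dim M+\dim B'=\dim A'/B'+\dim B'=\dim\da'$, and a surjection between equidimensional spaces is bijective. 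I expect this last step to be the main obstacle: since $\iota$ need not be an immersion when $dF$ has nontrivial kernel, one cannot simply transport a splitting through $\iota_*$, and must instead deduce bijectivity from surjectivity together with the dimension count supplied by $\hat F$ being a local diffeomorphism.
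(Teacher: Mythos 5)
Your proposal is correct and follows essentially the same route as the paper: form the extended bundle $Q'=Q\times_B B'$, define $\omega'$ at points $[u,e]$ by $\widetilde{F}_*X+Z^*\mapsto dF\circ\omega(X)+Z$ (your descent of $\mathrm{pr}_{B'}^*\mu'+\Ad((b')^{-1})dF(\mathrm{pr}_Q^*\omega)$ from $Q\times B'$ is the same form), and extend by $B'$-equivariance, with your horizontality/invariance checks matching the paper's two well-definedness cases. The only difference is that you actually carry out the verification that $\omega'$ is a pointwise linear isomorphism via surjectivity plus the dimension count from $\hat F$ being a local diffeomorphism, a step the paper dismisses with ``we can verify''; your argument there is sound.
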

\vspace{-2mm}
\begin{proof} 
The proof of this Proposition is same as the one of Theorem 1.5.15 of \cite{cap-slovak}. 
Thus we only explain the construction of a Cartan connection $(Q', \omega')$ of type $A'/B'$ on $M$. 
Since $B$ acts on $B'$ via $F$,  
from the given principal bundle $Q$ we obtain the extended bundle $Q' = Q \times_B B'$. 
The bundle homomorphism $\widetilde{F}: Q \to Q'$ 
is defined by $u \mapsto [u,e]$, which corresponds to 
the restriction of $F$ to $B$.   
Next we define a $\da'$-valued 1-form $\omega'$ on $\widetilde{F}(Q)$ by 
\[\omega'_{[u,e]}(\widetilde{F}_* X + Z^*) = dF \circ \omega(X) + Z \ \ (X \in T_uQ, Z \in \db').\]   
We enlarge this definition to the whole of $Q'$ by 
\[\omega'_{[u,c]} = R_{c^{-1}}^* Ad(c^{-1}) \omega_{[u,e]}  \ \ (c \in B').\]
This definition is well defined and  
we can verify $(Q',\omega')$ gives a Cartan connection of type $A'/B'$ on $M$.    
\end{proof} 
\begin{df}
Let $(Q, \omega)$ and $(Q', \omega')$ be  Cartan connections  
of type $A/B$ and $A'/B'$ respectively on $M$. Then we call $(Q, \omega)$ a subgeometry of $(Q', \omega')$ if there exists a bundle homomorphism $\iota: Q \to Q'$ corresponding to $F|_B: B \to B'$ 
such that $\iota$ induces the identity map between the base spaces and $\iota^* \omega' = dF \circ \omega$. 
\end{df}

\smallskip
In Proposition \ref{inducing cartan connection} a given Cartan connection $(Q, \omega)$ of type $A/B$ induces 
$(Q', \omega')$ of type $A'/B'$, and  $(Q, \omega)$ is a subgeometry of $(Q', \omega')$. 

\medskip 
\begin{prop}\label{flatness of subgeometry} 
Assume that a Cartan connection $(Q, \omega)$ is a subgeometry of $(Q', \omega')$. 
If $(Q, \omega)$ is flat, then $(Q', \omega')$ is also flat. Moreover when the differential 
$dF: \da \to \db$ is an injective homomorphism, the converse is also true. 
\end{prop}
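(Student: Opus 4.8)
The plan is to reduce the whole proposition to a single naturality identity relating the two curvature forms,
\[\iota^*\Omega' = dF\circ\Omega,\]
where $dF:\da\to\da'$ is the differential of $F$. To establish it I would pull back $\Omega' = d\omega' + \frac{1}{2}[\omega',\omega']$ along $\iota$, using that pullback commutes with the exterior derivative and with the bracket of Lie-algebra-valued forms, and then substitute the defining relation $\iota^*\omega' = dF\circ\omega$ of a subgeometry. Since $dF$ is a constant linear map we have $d(dF\circ\omega)=dF\circ d\omega$, and since $F$ is a Lie group homomorphism, $dF$ is a Lie algebra homomorphism, so $[dF\circ\omega, dF\circ\omega]=dF\circ[\omega,\omega]$. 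Collecting terms yields $\iota^*\Omega' = dF\circ(d\omega+\frac{1}{2}[\omega,\omega]) = dF\circ\Omega$, as claimed.

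Granting this identity, the converse is immediate: if $(Q',\omega')$ is flat then $\Omega'=0$, hence $dF\circ\Omega=\iota^*\Omega'=0$; when $dF$ is injective this forces $\Omega=0$, i.e. $(Q,\omega)$ is flat. The forward direction is where the real work lies. Assuming $\Omega=0$, the identity gives only $\iota^*\Omega'=0$, which says that $\Omega'$ vanishes on pairs of vectors tangent to the image subbundle $\iota(Q)$ — not yet that $\Omega'$ vanishes on all of $TQ'$.

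To close this gap I would work first along $\iota(Q)$ and then spread out by equivariance. The curvature of any Cartan connection is horizontal, i.e. $\Omega'(Z^*,\cdot)=0$ for every $Z\in\db'$; this is the standard computation from $\omega'(Z^*)=Z$ and the equivariance $R_c^*\omega'=\Ad(c^{-1})\omega'$. Under the isomorphism $\omega':T_{\iota(u)}Q'\to\da'$ the image $\iota_*(T_uQ)$ corresponds to $dF(\da)$ and the vertical space to $\db'$. Because the induced map $\hat F:A/B\to A'/B'$ is a local diffeomorphism, its differential $\da/\db\to\da'/\db'$ is surjective, which is exactly $dF(\da)+\db'=\da'$. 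Hence at each point of $\iota(Q)$ every tangent vector splits as an $\iota_*$-image plus a vertical vector; decomposing both arguments of $\Omega'$ this way and using $\iota^*\Omega'=0$ on the image–image term together with horizontality on the remaining terms gives $\Omega'|_{\iota(Q)}=0$. Finally, since $Q'=Q\times_B B'$ is the $B'$-orbit $\iota(Q)\cdot B'$ and the curvature transforms equivariantly, $\Omega'_{\iota(u)}=0$ implies $\Omega'_{\iota(u)c}=0$ for all $c\in B'$, so $\Omega'\equiv 0$.

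The step I expect to be delicate is precisely the passage from $\iota^*\Omega'=0$ to $\Omega'|_{\iota(Q)}=0$: it is here that the local-diffeomorphism hypothesis is genuinely used, via $dF(\da)+\db'=\da'$, and one must check that horizontality annihilates every term in the bilinear expansion carrying a vertical argument. Everything else — the naturality identity and the equivariant spreading — is formal.
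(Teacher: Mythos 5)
Your proof is correct and follows the same route as the paper: the key identity $\iota^*\Omega' = dF\circ\Omega$, obtained by pulling back $d\omega'+\frac{1}{2}[\omega',\omega']$ and using that $dF$ is a Lie algebra homomorphism, is exactly the paper's computation, from which both directions are deduced. In fact you supply more detail than the paper, which stops at the naturality identity with ``hence the assertion follows''; your completion of the forward direction --- horizontality of the Cartan curvature against fundamental vector fields, the decomposition $T_{\iota(u)}Q'=\iota_*(T_uQ)+\db'{}^*$ coming from $dF(\da)+\db'=\da'$, and $\Ad$-equivariance to propagate vanishing from $\iota(Q)$ to all of $Q'=\iota(Q)\cdot B'$ --- is precisely the argument needed to justify that step, and it is correct.
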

\begin{proof}
We compute the curvature form $\Omega'$ of $(Q',\omega')$.  
Pulling back $\Omega'$ by $\iota$ yields
\begin{eqnarray*}
\iota^*\Omega' &=& \iota^*(d\omega' + \frac{1}{2}[\omega', \omega']) \\
&=& dF(d\omega + \frac{1}{2}[\omega, \omega]) \\
&=& dF(\Omega). 
\end{eqnarray*}
Hence the assertion of the proposition follows.  
\end{proof}

We fix the complementary subspace $\m$ of $\db$ and $\m'$ of $\db'$. 
Let $\rho$ be the linear isotropy representation of $B$ on the tangent space to $A/B$ at the origin $o$.  
By identifying $T_oA/B$ with $\m$, $\rho$ is 
given by 
$\rho(b)X = Ad(b)X + \db$ for $b \in B$ and $X \in \m$. Thus we obtain the two linear isotropy representations 
$\rho: B \to GL(\m)$ and $\rho': B' \to GL(\m')$.    
We denote the kernel of $\rho$ by $C$ and the one of $\rho'$ by $C'$. Since we assume that 
$A/B$ is a subgeometry of $A'/B'$, there is a homomorphism $F: A \to A'$ whose differential $dF$ induces the  linear isomorphism $\widehat{dF}: \m \to \m'$. 

\medskip
\begin{lem}\label{diagram of isotropy rep}    
There exists an injective homomorphism $\overline{F}: \rho(B) \to \rho'(B')$ defined by 
$\overline{F}: \rho(b) \mapsto \widehat{dF} \circ \rho(b) \circ \widehat{dF}^{-1}$, 
and we have the commutative diagram: 
\[
\xymatrix{
	 B \ar[r]_F \ar[d]_\rho \ar@{}[dr]|\circlearrowleft &  B' \ar[d]_{\rho'} \\
     \rho(B) \ar@{^{(}->}[r]_{\overline{F}} & \rho'(B') 	
     }
\]     
Moreover $F$ is regarded as a bundle homomorphism corresponding to $F: C \to C'$.  
\end{lem}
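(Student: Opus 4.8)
The plan is to reduce everything to a single intertwining identity and then read off the remaining claims formally. The essential input is that the differential of a Lie group homomorphism intertwines the adjoint representations: since $F : A \to A'$ is a homomorphism with differential $dF : \da \to \da'$, one has $dF \circ \mathrm{Ad}(b) = \mathrm{Ad}(F(b)) \circ dF$ for every $b \in B$. Because $F(B) \subset B'$ we also have $dF(\db) \subset \db'$, so $dF$ descends to a map $\da/\db \to \da'/\db'$; under the identifications $\m \cong \da/\db$ and $\m' \cong \da'/\db'$ this descended map is exactly the isomorphism $\hat{dF} : \m \to \m'$ furnished by the subgeometry hypothesis. Writing $\pi_\m : \da \to \m$ and $\pi_{\m'} : \da' \to \m'$ for the projections along $\db$ and $\db'$, this says $\hat{dF}(X) = \pi_{\m'}(dF(X))$ for $X \in \m$, while $\rho(b)X = \pi_\m(\mathrm{Ad}(b)X)$ and $\rho'(b')X' = \pi_{\m'}(\mathrm{Ad}(b')X')$.

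The core step is to verify $\rho'(F(b)) \circ \hat{dF} = \hat{dF} \circ \rho(b)$ on $\m$, which is precisely the asserted commutativity $\overline{F} \circ \rho = \rho' \circ F$. Fix $X \in \m$ and compute $\pi_{\m'}(\mathrm{Ad}(F(b)) \, dF(X))$ in two ways. Splitting $dF(X) = \hat{dF}(X) + \pi_{\db'}(dF(X))$ and using that $\db'$ is $\mathrm{Ad}(B')$-invariant, so its contribution dies under $\pi_{\m'}$, the expression equals $\rho'(F(b))(\hat{dF}(X))$. On the other hand the intertwining identity turns $\mathrm{Ad}(F(b))\,dF(X)$ into $dF(\mathrm{Ad}(b)X)$; splitting $\mathrm{Ad}(b)X = \rho(b)X + \pi_\db(\mathrm{Ad}(b)X)$ and using $dF(\db) \subset \db'$, so again the $\db$-part vanishes under $\pi_{\m'}$, this equals $\hat{dF}(\rho(b)X)$. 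Equating the two yields the identity. This bookkeeping with the two projections together with the invariance of $\db'$ is the one place where care is needed, and I expect it to be the main (though still routine) obstacle; everything afterwards is formal.

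The remaining assertions follow immediately. The map $\overline{F}(\rho(b)) := \hat{dF} \circ \rho(b) \circ \hat{dF}^{-1}$ is conjugation by the isomorphism $\hat{dF}$, hence automatically a well-defined injective group homomorphism on $\rho(B)$; its image lands in $\rho'(B')$ because, by the identity just proved, $\overline{F}(\rho(b)) = \rho'(F(b))$. This gives the commutative square. For the final claim, recall $C = \ker\rho$ and $C' = \ker\rho'$: if $b \in C$ then $\rho(b) = \mathrm{id}_\m$, so $\rho'(F(b)) = \overline{F}(\mathrm{id}_\m) = \mathrm{id}_{\m'}$ and hence $F(b) \in C'$. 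Thus $F(C) \subset C'$, so $F$ restricts to a homomorphism $F : C \to C'$, and accordingly the bundle homomorphism attached to the subgeometry corresponds to $F|_C : C \to C'$, as claimed.
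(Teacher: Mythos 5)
Your proposal is correct and follows essentially the same route as the paper: both rest on the single intertwining identity $\mathrm{Ad}(F(b))\,dF(X) = dF(\mathrm{Ad}(b)X)$ read modulo $\db'$, which simultaneously yields $F(C)\subset C'$, the formula $\overline{F}(\rho(b)) = \hat{dF}\circ\rho(b)\circ\hat{dF}^{-1} = \rho'(F(b))$, and the commutativity of the square. The only (harmless) difference is one of ordering: you define $\overline{F}$ as conjugation by $\hat{dF}$, so well-definedness and injectivity are automatic and $F(C)\subset C'$ falls out as a corollary, whereas the paper defines $\overline{F}(\rho(b)) := \rho'(F(b))$ and uses $F(C)\subset C'$ to justify well-definedness first.
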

\begin{proof} 
Firstly we verify $F(C) \subset C'$. 
Assume that $b \in C$. Then $\rho(b)(X+\db) = X + \db$ for $X \in \m$.  
Then $\rho'(F(b))(\widehat{dF}(X)) = Ad(F(b))(dF(X)+\db') = dF(Ad(b)X) + \db' =  dF(X) + \db' = \widehat{dF}(X)$.  
Thus $\rho'(F(b)) = id_{\m'}$, and $F(b) \in C'$.
We define $\overline{F}: \rho(B) \to \rho'(B')$ by $\bar{F}: \rho(b) \mapsto \rho'(F(b))$ for $b \in B$. 
Since $F(C) \subset C'$, this is well defined, moreover we have $\overline{F}(\rho(b)) = \widehat{dF} \circ \rho(b) 
\circ \widehat{dF}^{-1}$. It follows that $\overline{F}$ is injective.
\end{proof}

Let $(Q, \omega)$ and $(Q', \omega')$ be Cartan connections of type $A/B$ and $A'/B'$ respectively. 
Assume that $(Q, \omega)$ is a subgeometry of $(Q', \omega')$. 
We denote the quotient manifold $Q/C$ by $\widetilde{Q}$ and $Q'/{C'}$ by $\widetilde{Q}'$.  
Then we obtain $\rho(B)$-structure $(\widetilde{Q}, \theta)$ and $\rho'(B')$-structure $(\widetilde{Q}', \theta')$ 
(see \cite[p.136]{tanaka2}). 
The projection $\rho: Q \to \widetilde{Q}$ is corresponding to $\rho: B \to \rho(B)$. 
Recall that $(\widetilde{Q}, \theta)$ gives a $\rho(B)$-structure on $M$ as follows: concerning each point  
$\rho(u) \in \widetilde{Q}$, 
$\rho(u)^{-1}$ is regarded as a linear isomorphism $T_{\pi_{Q/C}(\rho(u))}M \to \m$ by  
$\rho(u)^{-1}: {\pi_{Q/C}}_* \rho _* X \mapsto \theta(\rho _* X) = \omega_\m(X)$.   
Hence we obtain the bundle homomorphism  $\widetilde{Q} \hookrightarrow L(M)$ corresponding to the inclusion 
$\rho(B) \to GL(\m)$.  
Likewise we obtain the map $\widetilde{Q}' \hookrightarrow L(M)$, where $L(M)$ is regarded as the set of all linear 
isomorphisms $y:\m' \to T_pM (p \in M)$. 
 
The bundle homomorphism $t:L(M) \to L(M)$ is defined by 
$t: x \mapsto x \circ \widehat{dF}^{-1}$, 
which is corresponding to $GL(\m) \ni A \mapsto \widehat{dF} \circ A \circ \widehat{dF}^{-1} \in  GL(\m')$. 

\medskip
\begin{prop}\label{red of G-structure}
The $\rho(B)$-structure $\widetilde{Q}$ 
is a reduction of $\rho'(B')$-structure $\widetilde{Q}'$ 
i.e. $1)$ there exists a bundle homomorphism $\bar{\iota}: \widetilde{Q} \hookrightarrow  \widetilde{Q'}$ corresponding to $\bar{F}: \rho(B) \to \rho'(B')$, and $2)$  
$\bar{\iota}^*\theta' = \widehat{dF} \circ \theta$. 
The injection $\bar{\iota}$ is given by the restriction of $t:L(M) \to L(M)$.  
\end{prop}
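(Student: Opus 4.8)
The plan is to produce the two required pieces of data directly from the subgeometry hypothesis and the bundle homomorphism $\widetilde{F}\colon Q \to Q'$ furnished earlier, then to verify the two defining conditions of a reduction. First I would construct $\bar\iota$ by descending $\widetilde{F}$ through the quotients $Q \to \widetilde{Q} = Q/C$ and $Q' \to \widetilde{Q}' = Q'/C'$. Since Lemma \ref{diagram of isotropy rep} gives $F(C) \subset C'$, the map $\widetilde{F}$ carries $C$-orbits into $C'$-orbits, so it passes to a well-defined map $\bar\iota\colon \widetilde{Q} \to \widetilde{Q}'$ on the quotients. By construction the diagram relating $\rho\colon Q \to \widetilde{Q}$, $\rho'\colon Q' \to \widetilde{Q}'$, $\widetilde{F}$, and $\bar\iota$ commutes, and the equivariance of $\bar\iota$ with respect to $\bar F\colon \rho(B) \to \rho'(B')$ follows from the $B$-equivariance of $\widetilde{F}$ together with the commutative square of Lemma \ref{diagram of isotropy rep}. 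This establishes condition $1)$.

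Next I would identify $\bar\iota$ explicitly with the restriction of $t\colon L(M) \to L(M)$, $t(x) = x \circ \hat{dF}^{-1}$. For this I would use the description of the two $G$-structures as subbundles of $L(M)$ given just before the statement: a point $\rho(u) \in \widetilde{Q}$ is the linear frame $y\colon \m \to T_pM$ with $y^{-1} = \rho(u)^{-1}$ determined by $\theta(\rho_* X) = \omega_\m(X)$, and similarly for $\widetilde{Q}'$ using $\m'$ and $\omega'_{\m'}$. The key computation is to show that under the inclusions $\widetilde{Q} \hookrightarrow L(M)$ and $\widetilde{Q}' \hookrightarrow L(M)$, the frame attached to $\bar\iota(\rho(u))$ equals the frame attached to $\rho(u)$ precomposed with $\hat{dF}^{-1}$. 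This reduces to the relation $\iota^*\omega' = dF \circ \omega$ from the definition of subgeometry, restricted to the $\m$-component: since $\hat{dF}\colon \m \to \m'$ is the isomorphism induced by $dF$, one gets $\theta'(\bar\iota_* \rho_* X) = \hat{dF}(\omega_\m(X)) = \hat{dF}(\theta(\rho_* X))$, which is precisely condition $2)$, the equation $\bar\iota^*\theta' = \hat{dF} \circ \theta$. Comparing frames then gives $\bar\iota = t|_{\widetilde{Q}}$.

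The main obstacle I anticipate is bookkeeping the identification of $\m$ with $\m'$ through $\hat{dF}$ consistently across the three objects $\omega$, $\theta$, and the abstract frames in $L(M)$. Concretely, the soldering form $\theta$ on $\widetilde{Q}$ takes values in $\m$ while $\theta'$ on $\widetilde{Q}'$ takes values in $\m'$, so every comparison must be routed through $\hat{dF}$; the danger is a stray inverse or a mismatch between the quotient projection $\rho$ and the inclusion into $L(M)$. The cleanest way to control this is to fix, once and for all, the defining relations $\rho^*\theta = \omega_\m$ and $\rho'^*\theta' = \omega'_{\m'}$, and to feed the subgeometry identity $\iota^*\omega' = dF\circ\omega$ through them, taking $\m$-components only at the very end. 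Since $\hat{dF}$ is a genuine linear isomorphism and $\bar F$ is injective by Lemma \ref{diagram of isotropy rep}, no degeneracy arises, and the verification that $\bar\iota$ is a bundle injection over the identity of $M$ is then routine.
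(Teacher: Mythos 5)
Your proposal is correct and follows essentially the same route as the paper: you descend the bundle homomorphism from the subgeometry hypothesis through the quotients $Q/C$ and $Q'/C'$ using $F(C)\subset C'$ from Lemma \ref{diagram of isotropy rep}, derive $\bar{\iota}^*\theta' = \hat{dF}\circ\theta$ by pulling back through $\rho$ and feeding the identity $\iota^*\omega' = dF\circ\omega$ into $\rho^*\theta = \omega_{\m}$, and then identify $\bar{\iota}$ with $t|_{\widetilde{Q}}$ by comparing the frames $\bar{\iota}(\rho(u))$ and $\rho(u)\circ\hat{dF}^{-1}$. This matches the paper's proof step for step, including the injectivity argument via the injectivity of $\bar{F}$ and the commutativity of the diagram over the identity of $M$.
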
 
\vspace{-2mm}
\begin{proof} 
From assumption 
$(Q, \omega)$ is a subgeometry of $(Q', \omega')$, thus we have a bundle homomorphism $\iota: Q \to Q'$ corresponding to $F|_B: B \to B'$.  
We define a map $\bar{\iota}: \widetilde{Q} \to \widetilde{Q}'$ by $\bar{\iota}: \rho(u) \to \rho' \circ \iota(u)$. Since $F|_B$ is a bundle homomorphism corresponding to $F: C \to C'$,  
$\bar{\iota}$ is well-defined and gives a bundle homomorphism corresponding to $\bar{F}: \rho(B) \to \rho'(B')$.  
Hence we obtain the commutative diagram: 
\begin{equation}\label{commutative diagram 2}
\xymatrix{
	 Q \ar[r]_i \ar[d]_\rho \ar@{}[dr]|\circlearrowleft &  Q' \ar[d]^{\rho'} \\
     \widetilde{Q} \ar@{^{(}->}[r]_{\bar{\iota}} \ar[d]_{\pi_{\widetilde{Q}}} \ar@{}[dr]|\circlearrowleft & \widetilde{Q}' \ar[d]^{\pi_{\widetilde{Q}'}} \\	
     M    \ar[r]_{id}        &  M. 
     }
\end{equation} 

Since $\bar{\iota}$ induces the identity of base spaces and $\bar{F}$ is injective, $\bar{\iota}$ is injective. 
Now we show that $\bar{\iota}^*\theta' = \widehat{dF} \circ \theta$. 
Since $\bar{\iota} \circ \rho = \rho' \circ \iota$,  
pulling back $\bar{\iota}^*\theta'$ by $\rho: Q \to \widetilde{Q}$ yields $\rho^*(\bar{\iota}^*\theta') 
= (\rho' \circ \iota)^* \theta' = \iota*\omega'_{\m'} = \widehat{dF} \circ \omega_{\m}$. 
Hence $\bar{\iota}^*\theta' = \widehat{dF} \circ \theta$. 

By using the inclusion $\widetilde{Q} \hookrightarrow L(M)$ and $\widetilde{Q}' \hookrightarrow L(M)$,  
$t$ and $\bar{\iota}$,  we obtain the following diagram, which will be shown commutative as follows.  
\begin{equation}\label{commutative diagram 3}
\xymatrix{
	 L(M) \ar[r]_t \ar@{}[dr]|\circlearrowright &  L(M) \\ 
     \widetilde{Q} \ar@{^{(}->}[r]_{\bar{\iota}} \ar[u] & \widetilde{Q}' \ar[u] 	 
     }
\end{equation} 
From the equality $\bar{\iota}^*\theta' = \widehat{dF} \circ \theta$, for $\rho(u) \in \widetilde{Q}$ we have 
$\theta'_{\bar{\iota}(\rho(u))}(\bar{\iota}_*\rho _*X) = \widehat{dF} \circ (\rho_*X) = 
\widehat{dF} \circ \omega_{\m}(X)$. Thus $\bar{\iota}(\rho(u))$ gives a linear isomorphism 
$T_{\pi_{\widetilde{Q}'}(\bar{\iota}(\rho(u))}M \to \m'$ by $\bar{\iota}(\rho(u)): 
{\pi_{\widetilde{Q'}}}_* \bar{\iota}_* \rho _* X \mapsto \widehat{dF} \circ \omega_{\m}(X)$. 
Since the diagram 
\eqref{commutative diagram 2} 
is commutative, we have ${\pi_{\widetilde{Q}'}}_* \bar{\iota}_* \rho _* X = 
{\pi_{\widetilde{Q}}}_* \rho_* X$.  
Therefore $\bar{\iota}(\rho(u)) = \rho(u) \circ \widehat{dF}^{-1}$.  
On the other hand $t \circ \rho(u) =  \rho(u) \circ \widehat{dF}^{-1}$, and hence the diagram  
\eqref{commutative diagram 3} is commutative. 
\end{proof} 
Let $X'$ be a homogeneous space of $A'$. We choose $B'$ as 
the isotropy subgroup at a point $v$ in $X'$. 
If we are given a subgroup $A \subset A'$, we can consider the isotropy subgroup $A_v$ of $A$ at $v$. 
Then $A/A_v$ gives a subgeometry of $A'/B'$.  
Henceforth we say that $A/B$ is a subgeometry of $A'/B'$ if this condition is satisfied: 
$A$ is a subgroup of $A'$ and $B$ is the isotropy subgroup at $v$. 

\section{Castling transformations} 
In this section we establish a castling transformation of projective structures. 
Let $G$ be a Lie subgroup of $\PL(l)$. We consider the homomorphism $F: G \times \PL(m) \hookrightarrow 
\PL(\R^l \otimes \R^m)$ defined by $(g, A) \mapsto g \otimes A$. 
By $F$ we regard $G \times \PL(m)$ as 
a subgroup of $\PL(\R^l \otimes \R^m)$. When we identify $\R^l \otimes \R^m$ with  
$\underbrace{\R^l \oplus \cdots \oplus \R^l}_m$, 
$G \times \PL(m)$ acts on $P(\R^l \otimes \R^m)$    
by $(g, A).v := gv{}^tA$  for $v = (v_1, \ldots, v_m) \in P(\R^l \otimes \R^m)$. 
Assume that $n = l-m \geq  0$. Denote by $V_{m,l}$ a projective Stiefel manifold, which consists of 
projective frames of $m$-dimensional subspaces of $\R^l$. 

\medskip
\begin{prop}\label{cast of rep} 
Let $v$ be a point in $P(\R^l \otimes \R^m)$. 
Then the rank of $v = (v_1, \ldots, v_m)$ is $m$ and 
the homogeneous space $G/G_{<v>}$ is a subgeometry of $Gr_{m, l}=\PL(l)/\PL(l)_{<v>}$ if 
and only if $G \times \PL(m)/G \times \PL(m)_{v}$ is a subgeometry of the projective space $P(\R^l \otimes \R^m) = 
\PL(\R^l \otimes \R^m)/\PL(\R^l \otimes \R^m)_v$ defined by $F: G \times \PL(m) \hookrightarrow 
\PL(\R^l \otimes \R^m)$.   
\end{prop}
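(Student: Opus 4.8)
The plan is to exhibit $Gr_{m,l}$ as the base of a $G\times PL(m)$-equivariant fibration whose total space is the full-rank locus of $P(\R^l\otimes\R^m)$, and then transfer the openness of orbits back and forth along it. First I would record two elementary but essential facts about the action $(g,A)\cdot v = gv\,{}^tA$. Rank is an invariant: for representatives $\tilde g\in GL(l)$, $\tilde A\in GL(m)$ and $\tilde v$, the matrix $\tilde g\tilde v\,{}^t\tilde A$ has the same rank as $\tilde v$, and rank is unaffected by rescaling, so it descends to a $G\times PL(m)$-invariant function on $P(\R^l\otimes\R^m)$ taking the value $m$ exactly on the open dense subset which is identified with the projective Stiefel manifold $V_{m,l}$ (here $n=l-m\ge 0$ makes $m$ the maximal rank). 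Second, the column-space map $\pi\colon V_{m,l}\to Gr_{m,l}$, $[v]\mapsto\langle v\rangle$, is well defined (right multiplication by the invertible ${}^t\tilde A$ does not change the column space, and scaling is immaterial), is $G\times PL(m)$-equivariant with $G$ acting as usual on $Gr_{m,l}$ and $PL(m)$ acting trivially, and is a locally trivial fibre bundle with fibre $PL(m)$: two full-rank $l\times m$ matrices span the same subspace iff they differ by right multiplication by an element of $GL(m)$, which modulo overall scalars says precisely that $PL(m)$ acts simply transitively on each fibre.

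The heart of the argument is then the identification, valid whenever $\mathrm{rank}\,v=m$, of the orbit $O=(G\times PL(m))\cdot[v]$ with $\pi^{-1}(G\cdot\langle v\rangle)$. Equivariance gives $O\subseteq\pi^{-1}(G\cdot\langle v\rangle)$ at once, since $\pi((g,A)\cdot[v])=\langle gv\,{}^tA\rangle=g\langle v\rangle$. For the reverse inclusion I would take $[u]\in\pi^{-1}(G\cdot\langle v\rangle)$, so $\langle u\rangle=g\cdot\langle v\rangle$ for some $g\in G$; then $gv$ and $u$ span the same subspace, hence $u=(gv)\,C$ for some $C\in GL(m)$, and choosing $A\in PL(m)$ with ${}^tA=C$ realizes $[u]=(g,A)\cdot[v]$. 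This step uses exactly the fibrewise transitivity of $PL(m)$ established above.

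With this identification the proposition reduces to the fact that $\pi$, being a fibre bundle projection, is both continuous and open, together with the standard translation that a coset map $A/A_v\to A'/B'$ is a local diffeomorphism precisely when the $A$-orbit of $v$ is open (this is the only point at which the definition of subgeometry enters). For the forward implication, assume $\mathrm{rank}\,v=m$ and that $G/G_{\langle v\rangle}$ is a subgeometry of $Gr_{m,l}$, i.e. $G\cdot\langle v\rangle$ is open; then $\pi^{-1}(G\cdot\langle v\rangle)=O$ is open in $P(\R^l\otimes\R^m)$, so $(G\times PL(m))/(G\times PL(m))_v$ is a subgeometry. For the converse, assume $O$ is open; since all points of $O$ share the rank of $v$ and the locus where the rank is $<m$ has empty interior, openness forces $\mathrm{rank}\,v=m$, whence $\pi$ is defined on $O$, $O=\pi^{-1}(G\cdot\langle v\rangle)$, and $\pi(O)=G\cdot\langle v\rangle$ is open because $\pi$ is an open map, giving the subgeometry $G/G_{\langle v\rangle}\subset Gr_{m,l}$.

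The main obstacle I anticipate is not any one implication but the careful bookkeeping needed to make $\pi$ and the fibrewise $PL(m)$-transitivity precise in the projective setting: consistently matching right multiplication by ${}^tA$ with a change of frame $C\in GL(m)$ while passing to projective classes, and verifying that $V_{m,l}$ is genuinely the full-rank locus on which $\pi$ is a $PL(m)$-bundle. Once these are in place, the proposition follows purely formally from the openness and continuity of $\pi$.
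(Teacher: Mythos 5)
Your proposal is correct and follows essentially the same route as the paper: both reduce the statement to the equivalence of open orbits via the $PL(m)$-fibre bundle $\pi\colon V_{m,l}\to Gr_{m,l}$ (using that $\pi$ is continuous and open), the openness of $V_{m,l}$ in $P(\R^l\otimes\R^m)$ forcing $\mathrm{rank}\,v=m$ whenever the orbit is open, and the standard translation between subgeometries and open orbits. The only difference is one of detail, not of substance: you spell out explicitly the identification $(G\times PL(m))\cdot[v]=\pi^{-1}(G\cdot\langle v\rangle)$, which the paper leaves implicit in its chain $(2)\Leftrightarrow(3)\Leftrightarrow(3)'\Leftrightarrow(4)$.
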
 
\begin{proof} 
The proof follows the idea of Proposition 6 of section 2 in \cite{sato-kimura}. 
We can prove this proposition by showing that the following four assertions are equivalent.  
\begin{enumerate}[(1)]
\item 
$G/G_{<v>}$ is a subgeometry of the Grassmannian manifold $\PL(l)/\PL(l)_{<v>}$.  

\item
$G.<\!v\!>$ gives an open orbit in $G_{m,l}$. 

\item
$G \times \PL(m).v$ gives an open orbit in $V_{m,l}$.  

\item
$G \times \PL(m)/{G \times \PL(m)_{v}}$ is a subgeometry of the projective space $\PL(\R^l \otimes \R^m)/\PL(\R^l \otimes \R^m)_v$. 
\end{enumerate} 
The proof of $(1) \Leftrightarrow (2)$ is easy. 
To prove $(2) \Leftrightarrow (3)$, we consider the fiber bundle 
\[
	\xymatrix{
      V_{m,l} \ar[d]^{\pi} & PL(m) \ar[l] \\
      G_{m,l}
} 
\]
The natural projection $\pi$ is continuous and open map. It follows $(2) \Leftrightarrow (3)$.  
To prove $(3) \Leftrightarrow  (4)$, we observe that the manifold $V_{m,l}$ is naturally imbedded 
into $P(\R^l \otimes \R^m)$ with respect to the relative topology, indeed $V_{m,l}$ is an open submanifold in 
$P(\R^l \otimes \R^m)$.  Next we consider the assertion $(3)'$: $G \times \PL(m).v$ gives an open orbit in 
$P(\R^l \otimes \R^m)$. If $(3)'$ holds true,  then $v$ must belong to $V_{m,l}$. 
Hence we have $(3) \Leftrightarrow (3)'$. The proof of the equivalence $(3)' \Leftrightarrow  (4)$ is same 
as the one of $(1) \Leftrightarrow (2)$.  
\end{proof}

Now assume that $l-m \geq 1$. 
We fix a point $v$ of $V_{m,l}$ and one $v^\perp$ of $V_{l-m,l}$ such that   
the subspace $<\!v^\perp\!>$ spanned by $v^\perp$ 
is orthogonal to the one $<\!v\!>$ spanned by $v$. 
Let us define the isomorphism $*:\PL(l) \to \PL(l)$ by $g \mapsto {}^tg^{-1}$. Then $*$ gives 
the isomorphism between 
$\PL(l)_{<v>}$ and $\PL(l)_{<v^\perp>}$. We denote the differential of $*$ by the same symbol, and we have 
$*(A) = -{}^tA$ for $A \in \sll(l)$. 
Then $G.<\!v\!>$ gives an open orbit in $Gr_{m,l}$ if and only if $*G.<\!v^\perp\!>$  gives an open orbit in 
$Gr_{l-m,l}$. Hence we obtain the following trivial fact: 

\medskip
\begin{prop}\label{cast of rep2}
$G/G_{<v>}$ is a subgeometry of $Gr_{m, l}$ iff 
$*G/*G_{<v^\perp>}$ is a subgeometry of $Gr_{l-m, l}$. 
\end{prop}

\smallskip
Let $\rho: H \to \GL(\C^l)$ be a rational representation of a complex linear algebraic group $H$. Then originally 
the transformation 
\[
\rho \otimes id: H \times \GL(m) \to \GL(\C^l \otimes \C^m)  
\Leftrightarrow  \rho^* \otimes id: H \times \GL(l-m) \to \GL(\C^l \otimes \C^{l-m})\]  
is called a castling transformation in \cite{sato-kimura}. It has been proved that 
$\rho$ gives a prehomogeneous vector space iff $\rho^*$ gives a prehomogeneous vector space.    

Now we define the castling transformation of Cartan connections. 
Denote by $\g$ the Lie algebra of $G$. 
Assume $v \in V_{m,l}$ and that $G/G_{<v>}$ is a subgeometry of $Gr_{m, l}$. We denote by $\Lambda_1$ the Maurer-Cartan form of $PL(m)$. 

\medskip
\begin{prop}\label{cast of geo1} 
Denote by $Q$ 
a principal fiber bundle over $M$ with structure group 
$G_{<v>}$ and by $\omega$ a $\g$-valued 1-form on $Q$. 
Then the following are equivalent. 
\begin{enumerate}[1.] 
\item 
$(Q, \omega)$ is a Cartan connection of type $G/G_{<v>}$ on $M$.   

\item 
$(Q \times \PL(m), \omega \times \Lambda_1)$ is a Cartan connection of 
type $G \times PL(m)/{G \times PL(m)}_{v}$ on a manifold $N$.     
\end{enumerate}   
$(Q, \omega)$ is flat iff $(Q \times \PL(m), \omega \times \Lambda_1)$ is flat. 
\end{prop} 
\begin{proof} 
$\mathit{1} \Leftrightarrow \mathit{2}$: 
Assume the assertion 1. Then $Q \times \PL(m)$ is regarded as a principal fiber bundle over $M$ with 
structure group $G_{<v>} \times \PL(m)$. Since $G \times \PL(m)_{v}$ is a closed subgroup of 
$G_{<v>} \times \PL(m)$, we have the quotient $Q \times \PL(m)/{G \times \PL(m)_{v}}$ over which 
$Q \times \PL(m)$ 
is regarded as a principal fiber bundle with structure group $G \times \PL(m)_{v}$. 
Then we can directly check $(Q \times \PL(m), \omega \times \Lambda_1)$ gives a Cartan connection of 
type $G \times \PL(m)/G \times \PL(m)_{v}$ on $Q \times \PL(m)/{G \times \PL(m)_{v}}$.  
Conversely we assume the assertion $\mathit{2}$. 
We can directly check $(Q, \omega)$ gives a Cartan connection of type $G/G_{<v>}$ on $M$.  

Now we prove the equivalence of flatness between $\mathit{1}$ and $\mathit{2}$. We first observe that    
$(d\omega + \frac{1}{2}[\omega, \omega], d\Lambda_1 + \frac{1}{2}[\Lambda_1, \Lambda_1])$ gives  
$\g \times \sll(m)$-valued 2-form on  $Q \times PL(m)$. 
We can directly verify    
\[d(\omega \times \Lambda_1) + \frac{1}{2}[\omega \times \Lambda_1, \omega \times \Lambda_1]
= (d\omega + \frac{1}{2}[\omega, \omega], d\Lambda_1 + \frac{1}{2}[\Lambda_1, \Lambda_1]).\]
Note that since $\Lambda_1$ is the Maurer-Cartan form of  
$PL(m)$, we have $d\Lambda_1 + \frac{1}{2}[\Lambda_1, \Lambda_1] =0$. 
Hence $(Q, \omega)$ is flat if and only if 
$(Q \times \PL(m), \omega \times \Lambda_1)$ is flat. 
\end{proof} 

\begin{prop}\label{cast of geo2} 
Let $(Q, \omega)$ be a Cartan connection of type $G/G_{<v>}$ on $M$.   
Then 
$(Q, *\omega)$ gives a Cartan connection of type $*G/*G_{<v^\perp>}$ on $M$. 
$(Q, \omega)$ is flat iff $(Q, *\omega)$ is flat. 
\end{prop} 
\begin{proof}  
From the assumption $G_{<v>}$ acts on $Q$ on the right.   
We define the action of $*G_{<v^\perp>}$ on $Q$ by $u \cdot g := u*g$ ($u \in Q$, $g \in *G_{<v^\perp>}$).  
Then the bundle $Q$ is regarded also as a principal fiber bundle over $M$ with structure group $*G_{<v^\perp>}$. 
Moreover we define a one-form $*\omega$ by the composite of $*: \sll(l) \to \sll(l)$ and $\omega$. 
On the other hand since $G/G_{<v>}$ is a subgeometry of $*G/*G_{<v^\perp>}$, the Cartan connection $(Q, \omega)$ induces a Cartan connection $(Q_c, \omega_c)$ of type 
$*G/*G_{<v^\perp>}$ by  
Proposition \ref{inducing cartan connection}.  
This induced Cartan connection is isomorphic with $(Q, *\omega)$.
Hence by Proposition \ref{flatness of subgeometry} $(Q, \omega)$ is flat if and only if $(Q, *\omega)$ is flat. 
\end{proof} 

Now we assume the same assumption as Proposition \ref{cast of geo1}: 
Denote by $Q$ a principal fiber bundle over $M$ with structure group 
$G_{<v>}$ and by $\omega$ a $\g$-valued 1-form on $Q$. 
Then combining Propositions \ref{cast of rep}, \ref{cast of rep2}, \ref{cast of geo1} and \ref{cast of geo2} yields the next theorem.  

\medskip
\begin{thm}\label{castling transformation} The following are equivalent. 
\begin{enumerate}[1.]
\item 
$(Q \times \PL(m), \omega \times \Lambda_1)$ is a Cartan connection of type 
$G \times PL(m)/G \times PL(m)_{v}$.  
 
\item 
$(Q, \omega)$ is a Cartan connection of type $G/G_{<v>}$ on $M$. 

\item
$(Q, *\omega)$ is a Cartan connection of type $*G/*G_{<v^\perp>}$ on $M$. 

\item 
$(Q \times \PL(l-m), *\omega \times \Lambda_1)$ is a Cartan connection of type $*G \times PL(l-m)/$ 
$*G \times PL(l-m)_{v^\perp}$. 
\end{enumerate} 
Moreover the flatness of the above Cartan connections are equivalent. 
The Cartan connections 1 and 4 \mbox{\rm(}resp. 2 and 3\mbox{\rm)} are subgeometries of projective \mbox{\rm(}resp. Grassmannian\mbox{\rm)} Cartan connections.   
\end{thm} 

\smallskip
In this theorem we omit the base space $N$ of the Cartan connection $(Q \times \PL(m), \omega \times \Lambda_1)$ since $N$  
is diffeomorphic to the quotient $Q \times \PL(m)/G \times PL(m)_{v}$. This quotient is described as follows by using 
a Grassmannian structure on $M$. 

\medskip
\begin{prop} 
Suppose that $(Q, \omega)$ gives a Cartan connection of type $G/G_{<v>}$ on $M$. 
Then the base space $N$ of $(Q \times \PL(m), \omega \times \Lambda_1)$ is a principal fiber bundle over $M$ with group $PL(m)$.   
Moreover $(Q, \omega)$ induces a $\GL(n) \otimes \GL(m)$-structure $P_tM$ on $M$, and $N$ is diffeomorphic to the quotient of $P_tM$ by $\GL(n) \otimes \GL(1)$. 
\end{prop}
\vspace{-2mm} 
\begin{proof} 
We fix the complementary subspace $\m$ of $\g_{<v>}$ in $\g$, then the natural inclusion 
$\iota: G \to \PL(l)$ gives the linear isomorphism $\hat{d\iota}: \m \to M(n, m)$.  
We denote the isotropy representation of $G/G_{<v>}$ by  
$\rho: G_{<v>} \hookrightarrow \GL(\m)$. Then from the assumption 
$(Q/{ker\rho}, \theta)$ gives a $\rho(G_{<v>})$-structure 
$\widetilde{Q} \subset \mathcal{L}(M)$. 
From Proposition \ref{inducing cartan connection} we obtain the induced Grassmannian Cartan connection  
$(Q', \omega')$, which induces a $\GL(n) \otimes \GL(m)$-structure $P_tM$ on $M$.  
Now by Proposition \ref{red of G-structure} the natural inclusion 
$\iota: Q \to Q'$ induces the injective 
$\bar{\iota}: \widetilde{Q} \to P_tM$.  
Consequently we obtain the following diagram. 
\begin{equation*} \label{commutative diagram 4}
\xymatrix{
	 Q \ar@{^{(}->}_i[r] \ar[d]_\rho \ar@{}[dr]|\circlearrowleft &  Q' \ar[d]_{\tilde{\rho}} \\ 
     \widetilde{Q} \ar@{^{(}->}[r]_{\bar{\iota}} & 
     P_tM. 
     } 
\end{equation*} 
Thus we obtain the map $\bar{\iota} \circ \rho: Q \to P_tM$ corresponding to the restriction $\tilde{\rho}|_{G_{<v>}}$. For a matrix $A \in \GL(m)$ denote by $\bar{A}$ the image of the homomorphism 
$\GL(m) \to \PL(m)$. 
We define the map $\Phi: Q \times \PL(m)/{{G \times \PL(m)}_{v}} \to P_tM/{\GL(n) \otimes \GL(1)}$ by 
\[\Phi: (u,\bar{A}) {G \times \PL(m)}_v  \mapsto \bar{\iota} \circ \rho(u) I_n \otimes A^{-1} \GL(n) \otimes \GL(1).\]
This definition is well defined. Moreover the map $\Phi$ is a diffeomorphism.  
Now we observe that $\PL(m)$ acts 
on $Q \times \PL(m)/{{G \times \PL(m)}_{v}}$ and  
$P_tM/{\GL(n) \otimes \GL(1)}$ on the right as follows: 
$(u, \bar{A}){G \times \PL(m)}_v \cdot \bar{B} := (u, \overline{B^{-1} A}){G \times \PL(m)}_v$ and 
$x \ \GL(n) \otimes \GL(1) \cdot \bar{B} := x \ I_n \otimes B \ \GL(n) \otimes \GL(1)$. 
We can check that by these actions both $Q \times \PL(m)/{{G \times \PL(m)}_{v}}$ and  
$P_tM/{\GL(n) \otimes \GL(1)}$ can be regarded as principal fiber bundles over $M$ with 
structure group $PL(m)$. Then $\Phi$ gives a bundle isomorphism. 
If we have $m =1$, the base space of $(Q \times \PL(m), \omega \times \Lambda_1)$ is same as $M$, and 
the base space of $(Q \times \PL(n), *\omega \times \Lambda_1)$ is isomorphic to $\mathcal{L}(M)/GL(1)$. 
\end{proof}

\newpage
\begin{cor}\label{cast of geo} 
We call the transformation 
\[
\mathrm{(I)} \ (Q \times PL(m), \omega \times \Lambda_1) \leftrightarrow \mathrm{(I\hspace{-.1em}I)} \ (Q \times PL(l-m), *\omega \times \Lambda_1) 
\] 
a castling transformation of projective structures.  
\mbox{\rm(I)} is a subgeometry of a \mbox{\rm(}resp. flat\mbox{\rm)} projective Cartan connection iff   
\mbox{\rm(I\hspace{-.1em}I)} is a subgeometry of a \mbox{\rm(}resp. flat\mbox{\rm)} projective Cartan connection. 
\end{cor} 

\smallskip 
When $m =1$, the Cartan connection $(Q, \omega)$ itself gives a projective Cartan connection, whose model space is $PL(l)/PL(l)_{v}$ and $v$ belongs to $V_{1,l}$. 
By the castling transformation of $(Q, \omega)$, we obtain the Cartan connection $(Q \times PL(l-1), *\omega \times \Lambda_1)$, which is a subgeometry of a projective 
Cartan connection. 

Theorem \ref{castling transformation} is described by the following commutative diagram. 
We assume $(Q, \omega)$ is a Grassmannian Cartan connection on $M$. 
\[
\xymatrix{ 
& (Q \times PL(m), \omega \times \Lambda_1) \ar[dd]^{\pi_m} \\
(Q, \omega) \ar[ur] \ar[rd] \ar[d]_\rho & \\ 
P_tM \ar[d]_\pi \ar[r]_{r_m} & {M}_{m} \ar[ld]^{\bar{\pi}} \\
M & \\ 
(Q, *\omega) \ar[u] &  
}
\] 
The manifold ${M}_m$ denotes a quotient manifold $P_tM/\GL(n) \otimes \GL(1)$, and 
we denote by $r_m$ the projection $P_tM \to P_tM/\GL(n) \otimes \GL(1)$. 
The manifold ${M}_m$ is naturally isomorphic 
with $Q \times_{\PL(l)_{<v>}} \PL(m)$. Thus we obtain a natural projection from $Q$ to ${M}_m$ and 
there exists a natural inclusion $Q \to Q \times \PL(m)$. 
Denote by $\pi_m$ a projection from $Q \times \PL(m)$ to the quotient manifold  
$Q \times \PL(m)/\PL(l) \times \PL(m)_{v}$. From the proof of Proposition \ref{cast of geo} the quotient space  
$Q \times \PL(m)/\PL(l) \times \PL(m)_{v}$ is identified with ${M}_m$ by the bundle isomorphism given by 
$\pi_m(z, g) \leftrightarrow r_m \circ \rho(z)g^{-1}$.

\section{Successive castling transformations}
In this section we give two fundamental procedures to do castling transformations successively.  
The product group  $PL(l) \times \prod_{i=1}^j PL(k_i)$ naturally acts on 
$P(\R^l \otimes \bigotimes_{i=1}^j \R^{k_i})$ by the 
tensor product, namely via the inclusion 
$\imath: PL(l) \times \prod_{i=1}^j PL(k_i) \hookrightarrow PL(\R^l \otimes \bigotimes_{i=1}^j \R^{k_i})$ given by 
$\imath (\bar{g}, \bar{A_1}, \ldots, \bar{A_j}) = g \otimes A_1 \otimes \cdots \otimes A_j$. 
We denote the natural basis of $\R^{l}, \R^{k_1}, \ldots, \R^{k_j}$ by $\{e_{i_0}\}_{i_0=1}^l, 
\{e_{i_1}\}_{i_1= 1}^{k_1}, \ldots$, $\{e_{i_j}\}_{i_j= 1}^{k_j}$.  
Then a point $w$ in $\R^l \otimes \bigotimes_{i=1}^j \R^{k_i}$ 
is written by $w = \sum_{i_0, i_1,\ldots, i_j}C_{i_0 i_1 \cdots i_j} 
e_{i_0}$ $\otimes e_{i_1} \otimes \cdots \otimes e_{i_j}$, where $C_{i_0 i_1 \cdots i_j}$ is a coefficient. 
Now let $\sigma$ be an element of symmetric group of $\{1,2, \cdots j\}$. Then $\sigma$ induces a natural linear 
isomorphism $\R^l \otimes \bigotimes_{i=1}^j \R^{k_i} \to \R^l \otimes \bigotimes_{i=1}^j \R^{\sigma(k_i)}$, which is 
defined by $\sigma(w) = 
\sum_{i_0, i_1,\ldots, i_j}C_{i_0 i_1 \cdots i_j} e_{i_0} \otimes e_{i_\sigma(1)} \otimes \cdots \otimes 
e_{i_\sigma(j)}$.  The map $\sigma$ induces a diffeomorphism 
$P(\R^l \otimes \bigotimes_{i=1}^j \R^{k_i}) \to 
P(\R^l \otimes \bigotimes_{i=1}^j \R^{\sigma(k_i)})$.  
The group $PL(l) \times \prod_{i=1}^j PL(k_{\sigma(i)})$ naturally acts on 
$P(\R^l \otimes \bigotimes_{i=1}^j \R^{\sigma(k_i)})$, 
and the action satisfies the condition 
$(g, A_{\sigma(1)}, \ldots, A_{\sigma(j)}) \cdot \sigma(w)  = \sigma((g, A_1, \ldots, A_j) \cdot w)$.  
It is easy to prove the following.   

\smallskip
\begin{prop}\label{successive castling transf of rep} 
Assume that a point $w$ in $P(\R^l \otimes \bigotimes_{i=1}^j \R^{k_i})$ gives an open orbit of 
$PL(l) \times \prod_{i=1}^j PL(k_i)$. Then we have the following: 
\begin{enumerate}[1.] 
\item  
For any permutation $\sigma$ of $\{1,2, \cdots j\}$ the product group     
$PL(l) \times \prod_{i=1}^j PL(k_{\sigma(i)})$ admits an open orbit given by $\sigma(w)$. 
Isotropy subgroups $PL(l) \times \prod_{i=1}^jPL(k_i)_{w}$ and 
$PL(l) \times \prod_{i=1}^j PL(k_{\sigma(i)})_{\sigma(w)}$ 
are isomorphic. 
\item 
The product group $PL(l) \times \prod_{i=1}^j PL(k_i) \times PL(1)$ acts on $P(\R^l \otimes \bigotimes_{i=1}^j \R^{k_i} \otimes \R)$,    
which is identified with 
$P(\R^l \otimes \bigotimes_{i=1}^j \R^{k_i})$ 
naturally. Via this identification $w$ gives an open orbit of 
$PL(l) \times \prod_{i=1}^j PL(k_i) \times PL(1)$, and isotropy subgroups 
$PL(l) \times \prod_{i=1}^j PL(k_i)_{w}$ and 
$PL(l) \times \prod_{i=1}^j PL(k_i) \times PL(1)_{w}$ are isomorphic. 
\end{enumerate}
\end{prop}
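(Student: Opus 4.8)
The plan is to deduce both parts from the equivariance identity already recorded before the statement, namely $(g, A_{\sigma(1)}, \ldots, A_{\sigma(j)}) \cdot \sigma(w) = \sigma((g, A_1, \ldots, A_j) \cdot w)$; the content of the proposition is essentially a repackaging of this formula, so I expect no serious difficulty, only some bookkeeping.

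For part (1), I would first note that the reordering map $\sigma$ is a linear isomorphism of $\R^l \otimes \otimes_{i=1}^j \R^{k_i}$ onto $\R^l \otimes \otimes_{i=1}^j \R^{\sigma(k_i)}$ and hence descends to a diffeomorphism of the associated projective spaces. Next I would introduce the group isomorphism $\Psi_\sigma: PL(l) \times \prod_{i=1}^j PL(k_i) \to PL(l) \times \prod_{i=1}^j PL(k_{\sigma(i)})$, $(g, A_1, \ldots, A_j) \mapsto (g, A_{\sigma(1)}, \ldots, A_{\sigma(j)})$; that this is a bijective homomorphism is immediate entrywise. The displayed identity then becomes $\sigma(h \cdot w) = \Psi_\sigma(h) \cdot \sigma(w)$, so $\sigma$ is equivariant for $\Psi_\sigma$. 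Because $\Psi_\sigma$ is onto, the orbit of $\sigma(w)$ equals $\sigma$ applied to the orbit of $w$; as $\sigma$ is a diffeomorphism it preserves openness, giving the open orbit through $\sigma(w)$. The isotropy statement follows from the chain $h \cdot w = w \Leftrightarrow \sigma(h \cdot w) = \sigma(w) \Leftrightarrow \Psi_\sigma(h) \cdot \sigma(w) = \sigma(w)$, which shows that $\Psi_\sigma$ restricts to an isomorphism from the stabilizer of $w$ onto the stabilizer of $\sigma(w)$.

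For part (2), the observation is that adjoining a one-dimensional tensor factor is inert. The canonical isomorphism $\R^l \otimes \otimes_{i=1}^j \R^{k_i} \otimes \R \cong \R^l \otimes \otimes_{i=1}^j \R^{k_i}$ carries $w \otimes 1$ to $w$ and descends to a diffeomorphism of projective spaces. The additional factor acts trivially on this line; indeed $PL(1) = GL(1)/\R^* I_1$ is the trivial group, so under the identification the action of $PL(l) \times \prod_{i=1}^j PL(k_i) \times PL(1)$ coincides with that of $PL(l) \times \prod_{i=1}^j PL(k_i)$. Openness of the orbit through $w$ and the asserted isomorphism of isotropy subgroups are then immediate.

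The only points requiring care, and the closest thing to an obstacle, are the well-definedness checks: that $\Psi_\sigma$ is genuinely a group isomorphism between the two permuted product groups, and that the reordering map and the padding isomorphism descend from the vector spaces to honest diffeomorphisms of the projective spaces. Once these are in place the proposition follows formally, in agreement with the remark that it is easy.
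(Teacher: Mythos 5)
Your proposal is correct and follows exactly the route the paper intends: the paper states the equivariance identity $(g, A_{\sigma(1)}, \ldots, A_{\sigma(j)}) \cdot \sigma(w) = \sigma((g, A_1, \ldots, A_j) \cdot w)$ immediately before the proposition and then declares the result easy, omitting the proof, and your argument is precisely the bookkeeping (permutation group isomorphism, $\sigma$ a diffeomorphism carrying orbit to orbit, triviality of $PL(1)$) that fills in that omission.
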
 
From the Lie group $PL(l) \times \prod_{i=1}^j PL(k_i)$ we can obtain several new Lie groups by 
Propositions \ref{successive castling transf of rep} and castling transformations. 
For example let $L$ be a Lie subgroup of $PL(3)$ such that $L$ admits an open orbit $L.x$ in $P(\R^3)$. Then 
we obtain the sequence of new groups $*L \times PL(2)$, $L \times PL(2) \times PL(5)$, 
$*L \times PL(5) \times PL(13)$, $L \times PL(5) \times PL(13) \times PL(194)$, which admit open orbits in 
projective spaces.    
Note that $*L \times PL(2)$ is regarded as a subgroup $*L \otimes PL(2)$ of $PL(6)$.  
Next we apply Proposition \ref{successive castling transf of rep} to Cartan connections. 

\smallskip
\begin{prop}\label{successive castling transf of Cartan} 
Let $Q$ be a manifold equipped with a $\g$-valued 1-from $\omega$.   
Assume that $(Q \times \prod_{i=1}^j PL(k_i), \omega \times \prod_{i=1}^j \Lambda_1)$ gives a Cartan connection over a manifold $N$ of type 
\[PL(l)  \times \prod_{i=1}^j PL(k_i)/{PL(l) \times \prod_{i=1}^j PL(k_i)_w},\] 
which is a 
subgeometry of $PL(\R^l \otimes \bigotimes_{i=1}^j \R^{k_i})/PL(\R^l \otimes \bigotimes_{i=1}^j \R^{k_i})_w$  given by 
$\imath:PL(l) \times \prod_{i=1}^j PL(k_i)$ $\hookrightarrow PL(\R^l \otimes \bigotimes_{i=1}^j\R^{k_i})$.    
Then we have the following: 
\begin{enumerate}[1.] 
\item 
For any permutation $\sigma$ of $\{1,2, \cdots j\}$, the pair 
$(Q \times \prod_{i=1}^j PL(k_{\sigma(i)}), \omega \times \prod_{i=1}^j \Lambda_1)$ gives a Cartan connection over $N$ of type 
\[PL(l) \times \prod_{i=1}^j PL(k_{\sigma(i)})/
{PL(l) \times  \prod_{i=1}^j PL(k_{\sigma(i)})_{\sigma(w)}},\] 
which is a 
subgeometry of $PL(\R^l \otimes \bigotimes_{i=1}^j \R^{k_{\sigma(i)}})/ 
PL(\R^l \otimes \bigotimes_{i=1}^j \R^{k_{\sigma(i)}})_{\sigma(w)}$.  

\item 
$(Q \times \prod_{i=1}^j PL(k_i) \times PL(1), \omega \times \prod_{i=1}^j \Lambda_1 \times \Lambda_1)$ gives a Cartan connection over a manifold $N$ of type 
\[PL(l) \times \prod_{i=1}^j PL(k_i) \times PL(1)/
{PL(l)  \times \prod_{i=1}^j PL(k_i) \times PL(1)_w},\] 
which is a subgeometry of $PL(\R^l \otimes \bigotimes_{i=1}^j \R^{k_i} \otimes \R)/PL(\R^l \otimes 
\bigotimes_{i=1}^j \R^{k_i} \otimes \R)_w$. 
\end{enumerate} 
\end{prop}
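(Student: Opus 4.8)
The plan is to reduce both assertions to the representation-level statement Proposition~\ref{successive castling transf of rep}, in exact parallel with the way Proposition~\ref{cast of geo} was reduced to Proposition~\ref{cast of rep}. In each case the additional product factors merely carry Maurer--Cartan forms, so permuting them or adjoining a new one is a cosmetic operation on the total space; all of the substantive content---that the relevant point still gives an open orbit and that the isotropy groups are preserved up to isomorphism---is already furnished by Proposition~\ref{successive castling transf of rep}. Throughout I use that an orbit of a subgroup $A\subset A'$ through a point $v$ is open if and only if the induced map $\hat F\colon A/A_v\to A'/B'$ of homogeneous spaces is a local diffeomorphism, which by the convention of \S\,2.2 is exactly the statement that $A/A_v$ is a subgeometry of $A'/B'$.

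For part (1) I would first invoke Proposition~\ref{successive castling transf of rep}(1): since $w$ gives an open orbit of $PL(l)\times\prod_{i=1}^jPL(k_i)$, the permuted point $\sigma(w)$ gives an open orbit of $PL(l)\times\prod_{i=1}^jPL(k_{\sigma(i)})$ in $P(\R^l\otimes\otimes_{i=1}^j\R^{\sigma(k_i)})$, and the isotropy subgroups at $w$ and at $\sigma(w)$ are isomorphic. By the equivalence recalled above, $PL(l)\times\prod PL(k_{\sigma(i)})/{PL(l)\times\prod PL(k_{\sigma(i)})_{\sigma(w)}}$ is therefore a subgeometry of the permuted projective space. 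It then remains to verify that $(Q\times\prod PL(k_{\sigma(i)}),\,\omega\times\prod\Lambda_1)$ satisfies the three Cartan-connection axioms. Here I would observe that the total space $Q\times\prod PL(k_{\sigma(i)})$ is canonically diffeomorphic to $Q\times\prod PL(k_i)$ by reordering factors, that the target Lie algebra $\g\times\prod\sll(k_{\sigma(i)})$ differs from $\g\times\prod\sll(k_i)$ only by a reordering of summands, and that the intertwining identity $(g,A_{\sigma(1)},\ldots,A_{\sigma(j)})\cdot\sigma(w)=\sigma((g,A_1,\ldots,A_j)\cdot w)$ matches the structure group $PL(l)\times\prod PL(k_{\sigma(i)})_{\sigma(w)}$ with $PL(l)\times\prod PL(k_i)_w$ and hence reproduces the same base $N$. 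Axioms (1)--(3) are checked componentwise, and since each $\Lambda_1$ is the Maurer--Cartan form of its own factor it already satisfies the equivariance and fundamental-field conditions there; reordering the components leaves all three conditions intact.

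For part (2) I would use that $PL(1)=GL(1)/\R^{*}I_1$ is the trivial group, whose Lie algebra $\sll(1)$ vanishes, so its Maurer--Cartan form $\Lambda_1$ is the zero form on a point. Consequently $Q\times\prod PL(k_i)\times PL(1)$ is canonically the same manifold as $Q\times\prod PL(k_i)$, and $\omega\times\prod\Lambda_1\times\Lambda_1$ agrees with $\omega\times\prod\Lambda_1$ under the natural identification $\R^l\otimes\otimes_{i=1}^j\R^{k_i}\otimes\R\cong\R^l\otimes\otimes_{i=1}^j\R^{k_i}$. By Proposition~\ref{successive castling transf of rep}(2) the point $w$ still gives an open orbit of the enlarged group and the isotropy subgroups at $w$ are isomorphic, so the enlarged homogeneous space is again a subgeometry of the corresponding projective space, and the Cartan-connection axioms transport verbatim because the adjoined factor contributes nothing. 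The argument is thus essentially formal once Proposition~\ref{successive castling transf of rep} is available; the only step requiring care---and the main, if routine, obstacle---is the bookkeeping identifying the base spaces and structure groups before and after the operation, so that the quotient defining $N$ really is unchanged under the relabeling in (1) and under the trivial augmentation in (2).
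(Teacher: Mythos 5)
Your proposal is correct and follows exactly the route the paper intends: the paper states this proposition without proof, treating it as an immediate consequence of Proposition \ref{successive castling transf of rep} in the same way that Proposition \ref{cast of geo} rests on Proposition \ref{cast of rep}, and your componentwise verification of the Cartan-connection axioms together with the observations that permuting factors is a relabeling and that $PL(1)$ is trivial supplies precisely the omitted routine details. No substantive difference from the paper's approach.
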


We consider the castling transformation of 
$(Q \times \prod_{i=1}^j PL(k_i), \omega \times \prod_{i=1}^j \Lambda_1)$. 
The group $PL(l) \times \prod_{i=1}^{j} PL(k_i)$ can be identified with a subgroup of 
$PL(\R^l \otimes \bigotimes_{i=1}^j \R^{k_i})$ 
by the map $F: (g, A_1, \ldots, A_{j}) \mapsto g \otimes A_1 \otimes \cdots \otimes A_{j}$. 
Then one form $\omega \times \prod_{i=1}^j \Lambda_1$ can be identified with the 1-form 
$dF \circ \omega \times \prod_{i=1}^j \Lambda_1$, which is computed as follows:  
\vspace{-3mm}
\begin{eqnarray*} 
dF \circ \omega \times \prod_{i=1}^j \Lambda_1(X, Y_1, \cdots, Y_{j}) &=& 
\omega(X) \otimes I \otimes \cdots \otimes I + I \otimes Y_1 \otimes \cdots \otimes I  \\
&& {} + \cdots + I \otimes \cdots \otimes I \otimes Y_{j}.  
\end{eqnarray*} 
Thus the group isomorphism $*$ of $PL(\R^l \otimes \bigotimes_{i=1}^{j} \R^{k_i})$ is restricted to the subgroup 
$PL(l) \times \prod_{i=1}^{j} PL(k_i)$ by 
\vspace{-2mm}
\[* : (g, A_1, \ldots, A_{j}) \mapsto (*g, *A_1, \ldots, *A_{j}). \] 
The Lie algebra isomorphism $*$ of $\sll(\R^l \otimes \bigotimes_{i=1}^{j} \R^{k_i})$ is restricted to 
$\sll(l) \times \prod_{i=1}^{j} \sll(k_i)$ and we have 
\vspace{-5mm}
\[* (\omega \times \prod_{i=1}^j \Lambda_1) = * \omega \times \prod_{i=1}^j * \Lambda_1. \]
By using Propositions \ref{cast of geo} and \ref{successive castling transf of Cartan}, we can apply castling 
transformations successively. For example let us consider a projective 
Cartan connection $(Q, \omega)$ over 2-dimensional manifold $M$. The bundle $Q$ is a principal fiber bundle over  
$M$ with structure group $PL(3)_v$, where $v$ is an element of $P(\R^3)$   
and $\omega$ is a $\sll(3)$-valued 1-form. 
Then for example we obtain the following sequence by successive 
castling transformations: 

\begin{eqnarray*} 
&& (1) \ (Q, \omega) \longrightarrow (2) \  (Q, *\omega) \longrightarrow (3) \ (Q \times PL(2), *\omega \times \Lambda_1) \\ 
&& \longrightarrow  (4) \ (Q \times PL(2), \omega \times *\Lambda_1) \longrightarrow  
(5) \ (Q \times PL(2) \times PL(5), \omega \times *\Lambda_1 \times \Lambda_1).  
\end{eqnarray*}
In this process 
we fix a point $v^\perp \in V_{2, 3}$ and identify $v^\perp$ with a point $w$ of $P(\R^3 \otimes \R^2)$, and fix a point $w^\perp \in V_{5,  3 \cdot 2}$. 
Now consider the Grassmannian manifold $PL(l)/PL(l)_{<v>}$, and we denote by 
$\rho$ the isotropy representation of $PL(l)_{<v>}$. Here $PL(l)_{<v>}$ is expressed with respect to a basis obtained from $(v, v^\perp)$. 
Then $\rho$ takes values in 
$GL(n) \otimes GL(m)$. When $\rho(g)$ is expressed as  $A \otimes B$, we define a projective linear representation 
$\rho_m: PL(l)_{<v>} \to PL(m)$ by  $\rho_m(g) = {B}$. 
The isotropy group $PL(l) \times PL(m)_{v}$ is equal to the set $\{(g, \rho_m(g)) \mid g \in PL(l)_{<v>}\}$. 
Thus the two isotropy groups $PL(l)_{<v>}$ and $PL(l) \times PL(m)_{v}$ are isomorphic.

Here we omitted the process of 1 and 2 in Proposition \ref{successive castling transf of Cartan}.  In detail we omitted the 
Cartan connections $(Q \times PL(1), \omega \times \Lambda_1)$ and 
$(Q \times PL(2) \times PL(1), *\omega \times \Lambda_1 \times \Lambda_1)$.   
The structure group of each Cartan connection is given by 
\begin{eqnarray*}
&(1)& PL(3)_v,  \\
&(2)& PL(3)_{<v^\perp>} = *PL(3)_v, \\ 
&(3)& PL(3) \times PL(2)_{v^\perp} = \{(*g, \rho_2(*g)) \mid g \in PL(3)_v\}, \\
&(4)& PL(3) \times PL(2)_{<w^\perp>} = \{(g, *\rho_2(*g))\}, \\
&(5)& PL(3) \times PL(2) \times PL(5)_{w^\perp} = \{(g, *\rho_2(*g), \rho_5(g \otimes * \rho_2(*g)))\}. 
\end{eqnarray*} 
We express the action of $PL(2)_v$ on $Q$ as $u \cdot g = u g$ for $u \in Q$ and $g \in PL(2)_v$.  
Then each structure group acts on the bundle of each Cartan connection as follows:   
\begin{eqnarray*} 
&(2)& u \cdot * g = u g,  \\ 
&(3)& (u, A) \cdot (*g, \rho_2(*g)) = (u g, A \rho_2(*g)), \\
&(4)& (u, A) \cdot (g, *\rho_2(*g)) = (u g, A \rho_2(*g)), \\
&(5)& (u, A, B) \cdot (g, *\rho_2(*g), \rho_5(g \otimes *\rho_2(*g))) = (u g, A \rho_2(*g), 
B \rho_5(g \otimes * \rho_2(*g))).  
\end{eqnarray*}

Successive castling transformations yields a sequence of manifolds admitting a projective structure or   
a Grassmannian structure.  From now on we characterize those manifolds. 

\smallskip 
\begin{lem}\label{castling lemma about isotropy}
Let $v$ be a point in $P(\R^l \otimes \R^\alpha)$ and 
assume that $PL(l) \times PL(\alpha) \cdot v$ gives an open orbit in $P(\R^l \otimes \R^\alpha)$. 
Suppose that a Lie group $G$ is obtained by successive castling 
transformations from $PL(l) \times PL(\alpha)$, and a point $w$ in $P(\R^l \otimes \bigotimes_{i=1}^j\R^{k_i})$ 
is obtained from $v$.         
Then $G$ can be written as $PL(l) \times \prod_{i=1}^j PL(k_i)$ and 
$G \cdot w$ gives an open orbit in $P(\R^l \otimes \bigotimes_{i=1}^j \R^{k_i})$ again. 
Moreover there exists a projective linear representation $f_i$: $PL(l)_{<v>} \to PL(k_i)$ \ $(0 \leq i \leq j)$ 
such that $G_w = \{(f_0(g), f_1(g), \ldots, f_j(g)) \mid g \in PL(l)_{<v>}\}$, where $f_0 = id$ or $*$. 
\end{lem}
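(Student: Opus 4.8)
The plan is to argue by induction on the number of successive transformations — castlings, permutations of factors, and adjunctions of a $PL(1)$-factor — used to pass from $PL(l) \times PL(\alpha)$ to $G$. At each stage I carry along the full assertion: that the group has the form $PL(l) \times \prod_{i=1}^j PL(k_i)$, that the relevant point gives an open orbit, and that the isotropy subgroup is the graph $\{(f_0(g), f_1(g), \ldots, f_j(g)) \mid g \in PL(l)_{<v>}\}$, where $f_0 \in \{id,*\}$ acts on the $PL(l)$ factor and $f_i \colon PL(l)_{<v>} \to PL(k_i)$ are projective linear representations for $1 \le i \le j$. For the base case $j = 1$, $k_1 = \alpha$, $w = v$: openness is the hypothesis, and the isotropy computation recorded just before the statement gives $PL(l) \times PL(\alpha)_v = \{(g, \rho_\alpha(g)) \mid g \in PL(l)_{<v>}\}$, so that $f_0 = id$ and $f_1 = \rho_\alpha$.

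For the inductive step I treat the three admissible operations separately. A permutation $\sigma$ is handled by Proposition \ref{successive castling transf of rep}(1): the open orbit persists at $\sigma(w)$ and the isotropy is again a graph, with the representations merely reindexed. Adjoining a $PL(1)$-factor is handled by Proposition \ref{successive castling transf of rep}(2): openness and the graph form are preserved, with a trivial representation $f_{j+1}\colon PL(l)_{<v>} \to PL(1)$ appended. Neither operation disturbs $f_0$, so $f_0 \in \{id,*\}$ is maintained.

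The substantive case is a single castling, which I perform on the last factor (permutations reduce the general case to this). Write $\tilde G = PL(l) \times \prod_{i=1}^{j-1} PL(k_i)$, regarded as a subgroup of $PL(\R^L)$ with $\R^L = \R^l \otimes \otimes_{i=1}^{j-1}\R^{k_i}$ and $L = l\,k_1 \cdots k_{j-1}$, so that $G = \tilde G \times PL(k_j)$ acts on $P(\R^L \otimes \R^{k_j})$. The point $w$ corresponds to a rank-$k_j$ frame $v'$ spanning $\langle v'\rangle \in Gr_{k_j, L}$, and Proposition \ref{cast of rep} identifies openness of $G \cdot w$ with openness of $\tilde G \cdot \langle v'\rangle$. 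Comparing the inductive form of $G_w$ with the general isotropy computation preceding the lemma (now applied to $\tilde G \subset PL(\R^L)$, the defining representation $\rho_{k_j}$ being defined on all of $PL(\R^L)_{<v'>}$) identifies $\tilde G_{<v'>} = \{(f_0(g), \ldots, f_{j-1}(g)) \mid g \in PL(l)_{<v>}\}$. Castling now replaces $G$ by $*\tilde G \times PL(L - k_j)$: openness of the new orbit follows from Proposition \ref{sato-kimura} (equivalently Proposition \ref{cast of geo}), and since $*$ carries each $PL(k)$ isomorphically onto $PL(k)$, the new group is again of the form $PL(l) \times \prod_{i=1}^{j-1} PL(k_i) \times PL(L - k_j)$. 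Applying the general isotropy computation to $*\tilde G$ and using that $*$ induces the isomorphism $(*\tilde G)_{<v'^\perp>} = *(\tilde G_{<v'>})$ yields the new isotropy as the graph of $f_i' = * \circ f_i$ for $0 \le i \le j-1$, together with $f_j' = \rho_{L - k_j} \circ (*f_0, \ldots, *f_{j-1})$, a projective linear representation $PL(l)_{<v>} \to PL(L - k_j)$. Since $f_0 \in \{id,*\}$ and $* \circ * = id$, we again have $f_0' \in \{id,*\}$, which closes the induction; the isomorphism $G_w \cong PL(l)_{<v>}$ is then immediate from the graph description, projection to the argument $g$ being a group isomorphism.

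The main obstacle I anticipate is the bookkeeping in the castling step. One must verify that the (unlabelled) isotropy computation preceding the lemma, stated there for the full group $PL(l)$, applies verbatim to the regrouped subgroup $\tilde G \subset PL(\R^L)$, and that the restriction of the ambient involution $*$ to $\tilde G$ acts factorwise as $(g, A_1, \ldots, A_{j-1}) \mapsto (*g, *A_1, \ldots, *A_{j-1})$ — precisely the compatibility of $*$ with the tensor decomposition computed earlier in $\S 6$. Once these two points are secured, the transformation rule for the $f_i$ is forced and the remaining assertions are formal.
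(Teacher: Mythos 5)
Your proposal is correct and follows essentially the same route as the paper: induction on the number of operations, with permutations and $PL(1)$-adjunctions handled by Proposition \ref{successive castling transf of rep}, and the castling step computed by regrouping $PL(l)\times\prod_{i=1}^{j-1}PL(k_i)$ as a subgroup of $PL(\R^l\otimes\otimes_{i=1}^{j-1}\R^{k_i})$ and reading off the new isotropy as the graph of $*f_0,\ldots,*f_{j-1}$ together with $\rho_{lk_1\cdots k_{j-1}-k_j}(*f_0(g)\otimes\cdots\otimes*f_{j-1}(g))$. The only difference is that you make explicit the bookkeeping ($f_0'\in\{id,*\}$ via $*\circ*=id$, and the factorwise action of $*$) that the paper leaves implicit.
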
 
\begin{proof} 
First about the group $PL(l) \times PL(\alpha)$, the element of the isotropy subgroup $PL(l) \times PL(\alpha)_v$ is 
expressed as $(g, \rho_\alpha(g))$, where $\rho_\alpha$ is the projective linear representation of $PL(l)$  
introduced 
after Proposition \ref{cast of rep}. 
We now proceed by induction. 
Assume that there exists $w \in P(\R^l \otimes \bigotimes_{i=1}^j\R^{k_i})$ such that 
the group $PL(l) \times \prod_{i=1}^j PL(k_i)$ admits an open orbit given by $w$, and  
an element of the isotropy group $PL(l) \times \prod_{i=1}^j PL(k_i)_w$ is expressed as 
$(f_0(g), f_1(g), \ldots, f_j(g))$ for some $g \in PL(l)_{<v>}$.  Then the point $w$ must belong to $V_{k_j, l k_1 \cdots k_{j-1}}$.  
A group obtained by using the assertion (1) or (2) in Proposition \ref{successive castling transf of rep} 
also admits an open orbit and its isotropy group is described by using the projective linear representations of 
$PL(l)_{<v>}$. 
Since successive castling transformation consists of Propositions \ref{successive castling transf of rep} and 
\ref{cast of rep}, it is enough to consider the effect of castling transformation described in Proposition 
\ref{cast of rep}. 
By castling transformation of the group $PL(l) \times \prod_{i=1}^j PL(k_i)$, 
we obtain the group 
$PL(l) \times \prod_{i=1}^{j-1} PL(k_i) \times PL(l k_1 \cdots k_{j-1} - k_j)$ and a fixed pt 
$w^\perp \in V_{l k_1 \cdots k_{j-1} - k_j, l k_1 \cdots k_{j-1}}$, which gives an open orbit in 
$P(\R^l \otimes \bigotimes_{i=1}^{j-1} \R^{k_i} \otimes \R^{l k_1 \cdots k_{j-1} - k_j})$.  
Here we regard $PL(l) \times \prod_{i=1}^{j-1} PL(k_i)$ as the subgroup of 
$PL(\R^l \otimes \bigotimes_{i=1}^{j-1} \R^{k_i})$.   
Then about the isotropy group  
$PL(l) \times \prod_{i=1}^{j-1} PL(k_i) \times PL(l k_1 \cdots k_{j-1} - k_j)_{w^\perp}$ 
its element is expressed as $(* f_0(g), * f_1(g), \ldots, * f_{j-1}(g), 
\rho_{l k_1 \cdots k_{j-1} - k_j}(* f_0(g) \otimes * f_1(g) \otimes \cdots \otimes * f_{j-1}(g)))$ for some 
$g \in PL(l)_{<v>}$, 
where $\rho_{l k_1 \cdots k_{j-1} - k_j}$ is the projective linear  
representation of $PL(\R^l \otimes \bigotimes_{i=1}^{j-1} \R^{k_i})_{<w^\perp>}$.  
Thus we completes the induction step. 
\end{proof}
\vspace{-1mm}
\noindent
This Lemma also shows the fact that $G_w$ is isomorphic to $PL(l)_{<v>}$. 

\medskip
Now let $(Q, \omega)$ be a Grassmannian Cartan connection of type $(l-\alpha, \alpha)$ over $M$. 
Thus the model space is $PL(l)/{PL(l)_{<v>}}$, where $<\!v\!>$ is an element of $Gr_{\alpha, l}$.  
By successive castling transformations from $(Q, \omega)$ 
we obtain a Cartan connection $(Q \times \prod_{i=1}^j PL(k_i), \omega' \times \prod_{i=1}^j  (\Lambda_1)_i)$ of type $PL(l) \times \prod_{i=1}^j PL(k_i)/$ $PL(l) \times \prod_{i=1}^j PL(k_i)_w$,  
where $(\Lambda_1)_i = \Lambda_1$ or $*\Lambda_1$ and $\omega' = \omega$ or    
$\omega' = *\omega$.  We denote by $N$ the base space of $(Q \times \prod_{i=1}^j PL(k_i), \omega' \times \prod_{i=1}^j  (\Lambda_1)_i)$.  
Then the Cartan connection $(Q \times \prod_{i=1}^j PL(k_i), \omega' \times \prod_{i=1}^j   (\Lambda_1)_i)$ over $N$ 
induces a projective Cartan connection over $N$. The next proposition determines the relation of base spaces 
obtained by successive castling transformations. We assume that $k_i \neq 1$. 
Remove the $s$-th component from $(Q \times \prod_{i=1}^j PL(k_i), \omega' \times \prod_{i=1}^j   (\Lambda_1)_i)$
and denote it by $(Q \times \prod_{i \neq s}^j PL(k_i), 
\omega' \times \prod_{i \neq s}^j  (\Lambda_1)_i)$. 

\newpage
\begin{prop}\label{base space of successive Cartan conne} 
Choose $s$ and $t$ satisfying $1 \leq s < t \leq j$.  

\noindent
$(1)$ The base space $N$ 
is a principal fiber bundle over $M$ with group $\prod_{i=1}^j PL(k_i)$. 

\smallskip
\noindent
$(2)$ 
The pair $(Q \times \prod_{i \neq s}^j PL(k_i), 
\omega' \times \prod_{i \neq s}^j  (\Lambda_1)_i)$ \mbox{\rm(}resp. $(Q \times \prod_{i \neq t}^j PL(k_i), 
\omega' \times \prod_{i \neq t}^j  (\Lambda_1)_i)$ \mbox{\rm)} is  a 
Cartan connection on a manifold $L$ \mbox{\rm(}resp. $K$\mbox{\rm)}, which is a subgeometry of a Grassmannian Cartan connection of type $(l k_1 \cdots k_{s-1} k_{s+1} \cdots k_j - k_s, k_s)$ 
\mbox{\rm(}resp. $(l k_1 \cdots k_{t-1} k_{t+1} \cdots k_j - k_t, k_t)$\mbox{\rm)}.  
The base space $L$ is a $\prod_{i \neq s}^j PL(k_i)$-bundle over $M$. 

\smallskip
\noindent
$(3)$   
The base space $N$ is regarded as a principal fiber bundle over $L$ \mbox{\rm(}resp. $K$\mbox{\rm)} with group $PL(k_s)$ 
\mbox{\rm(}resp. $PL(k_t)$\mbox{\rm)}, and 
$PL(k_t)$ acts on $N$ and $L$. 
Thus we obtain following diagram:   
{\small
\[
	\xymatrix{\SelectTips{xy}{10}
	 & PL(k_s) \ar[r] & N \ar[dll] \ar[drr]  & PL(k_t) \ar[l] &  \\ 
	  L  &   PL(k_t) \ar[l] & &  PL(k_s)\ar[r] & K. \\
    } 
\]
}
Moreover $N$ is isomorphic to the bundle 
$P_tL/GL(l k_1 \cdots k_{s-1} k_{s+1} \cdots k_j - k_s) \otimes GL(1)$ and the action of $PL(k_t)$ on $L$ induces the action on $N$ by the differential. The quotient $N/PL(k_t)$ is isomorphic to $K$.   
\end{prop}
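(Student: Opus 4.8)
The plan is to reduce everything to the single-castling picture of Proposition \ref{cast of geo} together with the explicit description of isotropy groups supplied by Lemma \ref{castling lemma about isotropy}. Since the base space of a Cartan connection is the quotient of its total space by the structure group, we have
\[
N \;=\; \Bigl(Q \times \prod_{i=1}^j PL(k_i)\Bigr)\Big/ B,
\qquad
B := {PL(l) \times \prod_{i=1}^j PL(k_i)}_w .
\]
By Lemma \ref{castling lemma about isotropy}, $B = \{(f_0(g), f_1(g), \ldots, f_j(g)) \mid g \in PL(l)_{<v>}\}$ with $f_0 = id$ or $*$, so $B$ projects isomorphically onto its first factor, which is exactly the structure group of $Q \to M$. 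I would first record that $Q \times \prod_{i=1}^j PL(k_i)$ is a principal $PL(l)_{<v>} \times \prod_{i=1}^j PL(k_i)$-bundle over $M$ and that $B$ sits inside this group as the graph of $g \mapsto (f_1(g), \ldots, f_j(g))$. Quotienting by $B$ then identifies $N$ with the associated bundle $Q \times_{PL(l)_{<v>}} \prod_{i=1}^j PL(k_i)$, and the residual $\prod_{i=1}^j PL(k_i)$-action of Proposition \ref{cast of geo} (left multiplication by the inverse on each coordinate, which commutes with the right $B$-action because left and right translations always commute) is free and transitive on the fibers over $M$. This proves $(1)$.

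For $(2)$, I would invoke the permutation statements of Propositions \ref{successive castling transf of rep}$(1)$ and \ref{successive castling transf of Cartan}$(1)$ to move the $s$-th factor into the last slot, so that the reduced data $(Q\times\prod_{i\neq s}PL(k_i),\, \omega'\times\prod_{i\neq s}(\Lambda_1)_i)$ is precisely the Cartan connection obtained one castling step earlier, before $PL(k_s)$ was introduced. Writing $l_s := l\, k_1\cdots k_{s-1}k_{s+1}\cdots k_j$ and regarding $PL(l)\times\prod_{i\neq s}PL(k_i)$ as a subgroup of $PL(l_s)$, Lemma \ref{castling lemma about isotropy} shows this group admits an open orbit on the relevant projective Stiefel manifold, whence Proposition \ref{cast of geo} identifies the reduced data as a subgeometry of a Grassmannian Cartan connection of type $(l_s - k_s,\, k_s)$ over a base $L$. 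Applying $(1)$ to this shorter sequence shows $L$ is a $\prod_{i\neq s}PL(k_i)$-bundle over $M$; the same argument with $t$ in place of $s$ produces $K$.

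For $(3)$, Proposition \ref{cast of geo}, applied to the castling of the type-$(l_s-k_s,k_s)$ Grassmannian Cartan connection over $L$ by $PL(k_s)$, directly yields that $N$ is a $PL(k_s)$-bundle over $L$ and that $N \cong P_t L / GL(l_s - k_s)\otimes GL(1)$. The remaining content is the interaction of the two distinguished factors. Under the identification of $(1)$ the factors $PL(k_s)$ and $PL(k_t)$ act on $N$ on distinct coordinates and hence commute, so the $PL(k_t)$-action descends through $N \to L = N/PL(k_s)$ to the $PL(k_t)$-factor of the bundle $L \to M$ — this is the action ``induced by the differential'' — and lifts back to $N$. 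Finally, because this $PL(k_t)$-action is left multiplication on the $t$-th coordinate, quotienting $N$ by it collapses that coordinate and gives $(Q\times\prod_{i\neq t}PL(k_i))/\bar B$, where $\bar B$ is the image of $B$ after forgetting the $t$-th entry; by Lemma \ref{castling lemma about isotropy} this $\bar B$ is the isotropy group attached to the $t$-removed sequence, so $N/PL(k_t)\cong K$, completing the diagram.

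The main obstacle I anticipate is the bookkeeping in $(3)$: one must check that the representations $f_i$ produced by Lemma \ref{castling lemma about isotropy} are mutually compatible across the three sequences (full, $s$-removed, $t$-removed), so that $B$ restricts, after deleting the $t$-th coordinate, to exactly the isotropy group governing $K$. The permutation reductions are essential here — moving the factor being removed into the last slot ensures that the earlier representations are unaffected — and verifying that the isomorphism $N\cong P_t L/GL(l_s-k_s)\otimes GL(1)$ intertwines the $PL(k_t)$-action coming from $L$'s bundle structure with the $t$-th residual action on $N$ is the step requiring the most care.
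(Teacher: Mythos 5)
Your parts (1) and (2) are correct and follow essentially the same route as the paper: you identify $N$ with the quotient of $Q\times\prod_{i=1}^j PL(k_i)$ by the graph-type isotropy group supplied by Lemma \ref{castling lemma about isotropy} (the paper writes out explicit local trivializations $\Phi$ built from the representations $f_i$ where you use associated-bundle language, but the content is the same), and you obtain $L$ and $K$ by quotienting out a normal $PL(k_i)$-factor of the structure group and invoking Propositions \ref{successive castling transf of Cartan} and \ref{cast of geo}, exactly as the paper does.

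The gap is in (3), and it is precisely the point you flag at the end but never resolve. You observe that the coordinate action of $PL(k_t)$ on $N$ commutes with that of $PL(k_s)$, hence covers an action on $L=N/PL(k_s)$, and you then declare that ``this is the action induced by the differential.'' That is an assertion, not an argument, and it is stated backwards: the claim to be proved is that if one starts from the action of $PL(k_t)$ on the manifold $L$, differentiates it to an action on $\mathcal{L}(L)$, checks that this preserves the Grassmannian structure $P_tL$, and pushes it down to $P_tL/GL(l k_1\cdots k_{s-1}k_{s+1}\cdots k_j-k_s)\otimes GL(1)\cong N$, then the result coincides with the coordinate action on $N$. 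A priori these are two different lifts to the total space of $N\to L$ of one and the same action on $L$, and the fact that the coordinate action covers the action downstairs does not identify them. This identification is where the paper spends the bulk of its proof: it verifies $R_{B_s}^{\,*}\theta=\theta$ on $\widetilde P=P/\ker\rho$ using the invariance of $\omega'\times\prod\Lambda_1$ under $R_{B_s}$ together with the commutative diagram relating $P$, $\widetilde P$ and $L$, concludes that the action on $\widetilde P$ is induced by the differential of the action on $L$, extends it uniquely through $P_tL$ (via the embedding of Proposition \ref{red of G-structure}) to the quotient, and only then matches the outcome with the coordinate action by means of the bundle isomorphism of Proposition \ref{cast of geo}. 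Your quotient identification $N/PL(k_t)\cong K$ survives without this, since it only uses the coordinate action and the compatibility of the $f_i$ across the full and $t$-removed sequences, but the clause ``the action of $PL(k_t)$ on $L$ induces the action on $N$ by the differential'' is left unproved in your proposal.
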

\vspace{-1mm}  
\begin{proof} 
We can assume that the number $t$ is equal to $j$ without loss of generality. 
Firstly we show that $N$ is a principal fiber bundle over $M$. 
From the assumption $N$ is diffeomorphic to the quotient manifold 
$Q \times \prod_{i=1}^j PL(k_i)$/$PL(l) \times$ $\prod_{i=1}^j PL(k_i)_w$, 
thus we identify $N$ with this quotient manifold. 
By using this identification the group $\prod_{i=1}^j PL(k_i)$ naturally acts on $N$ as follows: 
let $[z, A_1$, $\ldots, A_j]$ be an element of $N$ and $(B_1, \ldots, B_j)$ be an element of $\prod_{i=1}^j PL(k_i)$. We define the action by $[z, A_1, \ldots, A_j] \cdot (B_1, \ldots, B_j) := [z, B_1^{-1} A_1, \ldots, B_{j}^{-1}A_j]$. This action is free. 
By using the projection $\pi:Q \to M$  
we define the projection $\pi_N: N \to M$ by $[z, A_1, \ldots, A_j] \mapsto \pi(z)$. 
Moreover $\pi:Q \to M$ is a  principal fiber bundle, thus for each open neighborhood $U$ of $M$ there is a local trivialization 
$\pi^{-1}(U) \to U \times PL(l)_{<v>}$  mapping $z$ to $(\pi(z), \phi(z))$. 
Now by Lemma \ref{castling lemma about isotropy} the isotropy subgroup ${PL(l) \times \prod_{i=1}^j  PL(k_i)_w}$ is given by the set 
$\{(f_0(g), f_1(g), \ldots, f_j(g)) \mid g \in PL(l)_{<v>}\}$. 
We define the map 
$\Phi: \pi_N^{-1}(U) \to U \times \prod_{i=1}^j PL(k_i)$ by 
\vspace{-2mm}
\[[z, A_1, \ldots, A_j] \mapsto (\pi(z), {f_1}'(\phi(z))  A_1^{-1}, \ldots, {f_j}'(\phi(z)) A_j^{-1}),\]  
where ${f_i}' = f_i$ when $(\Lambda_1)_i = \Lambda_1$ or 
${f_i}' = *f_i$ when $(\Lambda_1)_i = *\Lambda_1$. 
The map $\Phi$  is well-defined and diffeomorphism, moreover preserving the action of $\prod_{i=1}^j PL(k_i)$. 
By using $\pi_N$ and the local trivializations $\Phi$, it is shown that $N$ is a principal fiber 
bundle over $M$ with group $\prod_{i=1}^j PL(k_i)$.  

Since $\prod_{i=1}^{j-1} \{e\} \times PL(k_j)$ is a normal closed subgroup of the structure group 
$\prod_{i=1}^j PL(k_i)$ of $N$,  the quotient $N/{PL(k_j)}$ is again a principal fiber bundle over $M$ with group 
$\prod_{i=1}^{j-1} PL(k_i)$.  Put $H = \prod_{i=1}^{j-1} PL(k_i)$. Then 
the manifold $Q \times H$ is regarded as a principal fiber 
bundle over $M$ with group $PL(l)_{<v>}' \times H$, where $PL(l)_{<v>}' = PL(l)_{<v>}$ if 
$\omega' = \omega$ or $PL(l)_{<v>}' = *PL(l)_{<v>}$ if $\omega' = *\omega$. 
The group $PL(l)_{<v>}' \times H$ 
contains the closed subgroup 
$PL(l) \times H_{<w>}$. 
Hence we obtain the fiber bundle $Q \times H$ over the quotient manifold 
$Q \times H/{PL(l) \times H_{<w>}}$ with structure group 
$PL(l) \times H_{<w>}$. 
There is a diffeomorphism from  $Q \times H/PL(l) \times H_{<w>}$ 
to $N/PL(k_j)$ defined by 
$[u,A_1,\ldots, A_{j-1}] \mapsto [u,A_1,\ldots, A_{j-1}, e]PL(k_j)$. By Proposition \ref{cast of geo} 
$(Q \times H, \omega \times \prod_{i=1}^{j-1} \Lambda_1)$ is a Cartan connection over 
$N/PL(k_j)$ of type 
$PL(l) \times H/PL(l) \times H_{<w>}$. 
From the assumption the model space 
$PL(l) \times H/PL(l) \times H_{<w>}$  
is a subgeometry of the Grassmannian manifold 
$PL(\R^l \otimes \bigotimes_{i=1}^{j-1} \R^i)/PL(\R^l \otimes \bigotimes_{i=1}^{j-1} \R^i)_{<w>}$ according to the map   
$F: PL(l) \times H \hookrightarrow PL(\R^l \otimes \bigotimes_{i=1}^{j-1} \R^i)$ 
defined by the tensor product.    
Note that a point $w$ is included in $V_{k_j, l k_1 \cdots k_{j-1}}$.  
Therefore $N/PL(k_j)$ admits a Grassmannian Cartan connection of type $(l k_1 \cdots k_{j-1} - k_j, k_j)$. 
\vspace{2mm}
\newline 
\indent We observe naturally $PL(k_s)$ acts freely on $Q \times H$ by 
$(u, A_1, \ldots, A_s$, $\ldots, A_{j-1})$ $\cdot B_s$ $:=$ $(u, A_1, \ldots, B_s^{-1} A_s, \ldots, A_{j-1})$ for $B_s \in PL(k_s)$. We denote this right action by $R_{B_s}$. 
Put $L = N/PL(k_j)$. The action of $PL(k_s)$ on $Q \times \prod_{i=1}^{j-1} PL(k_i)$ induces the action of $PL(k_s)$ on $L$ by using the bundle isomorphism between $L$ and $Q \times H/
{PL(l) \times H_{<w>}}$. 
\vspace{2mm}
\newline 
\indent We fix the complementary subspace $\m$ of $\sll(l)_{<v>}$ in $\sll(l)$, thus we have 
$\sll(l) = \m \oplus \sll(l)_{<v>}$. Then 
$\m' \times \prod_{i=1}^{j-1} \sll(k_i)$ gives a complementary subspace of 
$\sll(l) \times \prod_{i=1}^{j-1} \sll(k_i)_{<w>}$ in $\sll(l) \times \prod_{i=1}^{j-1} \sll(k_i)$, where 
$\m' = \m$ when $\omega' = \omega$ or $\m' = *\m$ when $\omega' = *\omega$.  Furthermore 
the differential $dF$ of $F$ induces a linear isomorphism $\hat{dF}$ from 
$\m' \times \prod_{i=1}^{j-1} \sll(k_i)$ to $M(l k_1 \cdots k_{j-1} - k_j, k_j)$. 
\vspace{2mm}
\newline 
\indent 
We denote by $\rho$ the isotropy representation of the model space 
$PL(l) \times H/PL(l) \times H_{<w>}$. 
Denote $Q \times H$ by $P$, and the natural projection 
$P \to P/ker\rho$ by $\rho$. We denote by $\widetilde{P}$ the quotient space $P/ker\rho$. 
The action of $PL(k_s)$ on $P$ induces the action on 
$\widetilde{P}$. 
Then there exists a unique 
1-form $\theta$ on $\widetilde{P}$ such that 
$\rho^* \theta$ $=$ 
${\omega' \times \prod_{i=1}^{j-1} \Lambda_1}_{\m' \times \prod_{i=1}^{j-1} \sll(k_i)}$.  
The pair $(\widetilde{P}, \theta)$ gives a $\rho(PL(l) \times H_{<w>})$-structure  
over $L$, which is a subbundle of a $GL(l k_1 \cdots k_{j-1} - k_j) \otimes GL(k_j)$-structure $P_tL$.  
By Proposition \ref{red of G-structure} 
the imbedding $\iota$ of $\widetilde{P}$ into $P_tL$ is given by the restriction of the bundle isomorphism 
$t: \mathcal{L}(L) \to \mathcal{L}(L)$, which is defined by $t: x \mapsto x \circ \hat{dF}^{-1}$.  
\vspace{2mm}
\newline
\indent 
The equality ${R_{B_s}}^* \omega \times \prod_{i=1}^{j-1} \Lambda_1$ $=$ $\omega \times \prod_{i=1}^{j-1} \Lambda_1$ 
yields  
${R_{B_s}}^* \rho^* \theta = \rho^* \theta$. 
Since we have the commutative diagram 

\begin{equation} \label{commutative diagram 5}
\xymatrix{
	 P \ar[r]^{R_{B_s}} \ar[d]_\rho \ar@{}[dr]|\circlearrowleft &  P \ar[d]_{\rho} \\
     \widetilde{P} \ar[r]^{R_{B_s}} \ar[d] \ar@{}[dr]|\circlearrowleft & \widetilde{P} \ar[d] \\	
     L    \ar[r]^{R_{B_s}}    &  L,  
     }
\end{equation} 
it follows that ${R_{B_s}}^* \rho^* \theta = \rho^* {R_{B_s}}^*\theta$. Thus ${R_{B_s}}^* \theta = \theta$.  
Therefore the action $R_{B_s}: \widetilde{P} \to \widetilde{P}$ is induced by the differential of the 
action $R_{B_s}: L \to L$. Moreover $R_{B_s}: \tilde{P} \to \tilde{P}$ is 
uniquely extended to the action $R_{B_s}:P_tL \to P_tL$  by 
\[\iota \circ \rho(z, A_1, \ldots, A_s, \ldots, A_{j-1}) B \otimes C \cdot D 
= \iota \circ \rho(z,A_1, \ldots, D^{-1} A_s, \ldots, A_{j-1}) B \otimes C, \] 
where $(z, A_1, \ldots, A_s, \ldots, A_{j-1}) \in Q'$,  $B \otimes C$ $\in$ $GL(l k_1 \cdots k_{j-1} - k_j) 
\otimes GL(k_j)$ and $D \in PL(k_s)$. 
This action naturally induces the action of $PL(k_s)$ on $P_tL/GL(lk_1\cdots k_{j-1}-k_j) \otimes GL(1)$.  
We describe the process that the action of $PL(k_s)$ on $P$ induces the actions on other manifolds 
by the following diagram. 
\[ \xymatrix{ 
    PL(k_s) \ar@/^1pc/[d] &  PL(k_s) \ar@/^1pc/[d] & P L(k_s) \ar@/^1pc/[d] & PL(k_s) \ar@/^1pc/[d] \\            
    P \ar@{-->}[r] & \widetilde{P} \ar@{-->}[r] & P_tL \ar@{-->}[r] & P_tL/GL(lk_1\cdots k_{j-1}-k_j) \otimes GL(1) 
   }
\] 
By the proof of Proposition \ref{cast of geo} there is a $PL(k_j)$-bundle isomorphism from $N$ $=$ 
$P \times PL(k_j)/H \times \prod_{i=1}^j PL(k_i)_w$ to $P_tL/GL(lk_1\cdots k_{j-1}-k_j) \otimes GL(1)$. Via this isomorphism we see that  
the action of $PL(k_s)$ on $N$ coincides with the one on $P_tL/GL(lk_1\cdots k_{j-1}-k_j) \otimes GL(1)$. 
Thus the induced action of $PL(k_s)$ on $N$ is given by 
$[u, A_1, \ldots, A_s, \ldots, A_j] \cdot B_s = [u, A_1, \ldots, B_s^{-1}A_s, \ldots, A_j]$ for $B_s \in PL(k_s)$. 
Consequently the action of $PL(k_s)$ on $L$ induces the action on $N$ by the differential.   
By this action we have the quotient $K = N/PL(k_s)$. On the other hand 
let $\sigma$ be a permutation defined by 
\[\sigma = 
\begin{pmatrix} 
1 & 2 & \cdots & s-1 & s & s+1 & \cdots & j-1 & j \\ 
1 & 2 & \cdots & s-1 & s+1 & \cdots & j-1 & j & s 
\end{pmatrix}.  
\] 
Then by Proposition \ref{successive castling transf of Cartan} 
the given Cartan connection $(Q \times \prod_{i=1}^{j} PL(k_i), \omega' \times \prod_{i=1}^{j} (\Lambda_1)_i)$ over  $N$ can be regarded as a Cartan connection 
over $N$ of type $PL(l) \times \prod_{i=1}^{s-1} PL(k_i) \times \prod_{i=s+1}^{j} PL(k_i) \times PL(k_s)/PL(l) \times  \prod_{i=1}^{s-1} PL(k_i) \times \prod_{i=s+1}^{j}$ $PL(k_i)\times PL(k_s)_{\sigma(w)}$, 
which is a subgeometry of the projective space  
$P(\R^l \otimes \bigotimes_{i=1}^{s-1} \R^{k_i} \otimes \bigotimes_{i = s+1}^{j} \R^{k_i} \otimes \R^s)$.  
By Proposition \ref{cast of geo} it follows that $K$ admits a Grassmannian Cartan connection of type 
$(l k_1 \cdots k_{s-1} k_{s+1} \cdots k_{j} - k_s, k_s)$. 
\end{proof} 

\section{Examples of successive castling transformations}  
Let $(Q, \omega)$ be a projective Cartan connection over $M$.   
As we demonstrate it after Proposition \ref{successive castling transf of Cartan}
by successive castling transformations we can obtain the following Cartan connections:  
\begin{eqnarray*} 
&& \hspace{-0.2cm} \ (Q, \omega) \\ 
&& \hspace{-0.5cm} \longrightarrow 
(Q \times PL(n), *\omega \times \Lambda_1) \longrightarrow 
(Q \times PL(n) \times PL(n^2 + n -1), \omega \times *\Lambda_1 \times \Lambda_1) \\ 
 && \hspace{-0.5cm} \longrightarrow (Q \times PL(n^2 + n -1), \omega \times \Lambda_1). 
\end{eqnarray*}
We denote the base space of $(Q, \omega)$ by $M_1$, the one of 
$(Q \times PL(n), *\omega \times \Lambda_1)$ by $M_n$ and so on. 
Generally 
$M_{k_1 \times \cdots \times k_j}$ denotes a principal fiber bundle over $M$ with structure 
group $\prod_{i=1}^j PL(k_i)$. 
Then following the above successive Cartan connections from $(Q, \omega)$ we obtain the sequence of base spaces: 
$M_1 \longrightarrow M_2 \longrightarrow M_{2 \times 5} \longrightarrow M_5$.
The Cartan connection
$(Q \times PL(n), *\omega \times \Lambda_1)$ over $M_n$ and  
$(Q \times PL(n) \times PL(n^2 + n -1), \omega \times *\Lambda_1 \times \Lambda_1)$ over 
$M_{n \times (n^2 + n -1)}$  
induce projective Cartan connections, and the Cartan connection  
$(Q \times PL(n^2 + n - 1), \omega \times \Lambda_1)$ over $M_{n^2 + n -1}$ induces a Grassmannian Cartan 
connection. 
Now we describe those base spaces more explicitly: 

\medskip
\begin{prop}\label{example of base space} 
$M_n$ is isomorphic to the projective frame bundle of $M$, and $M_{n \times (n^2 + n -1)}$ and $M_{n^2 + n -1}$ are isomorphic to the following bundles respectively:  
\vspace{-1mm}
\begin{eqnarray*}
\widetilde{\mathcal{L}}_{n \times (n^2 + n -1)} := \hspace{-5mm} 
&& \{ (u_1, u_2) \mid u_1: \mbox{projective frame of} \ T_pM, \\ 
&& \ u_2: \mbox{projective frame of} \ T_pM \rtimes \sll(T_pM), p \in M \}, \\
\widetilde{\mathcal{\mathcal{L}}}_{n^2 + n -1} := \{u_2 \mid && \hspace{-5mm} u_2: \mbox{projective frame of} \ T_pM \rtimes \sll(T_pM), p \in M \}.
\end{eqnarray*} 
\end{prop}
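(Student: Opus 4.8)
The plan is to peel off the sequence $M_1\to M_n\to M_{n\times(n^2+n-1)}\to M_{n^2+n-1}$ one castling at a time, using repeatedly the principle that \emph{the first castling of a projective structure reproduces the projective frame bundle of its base}. Concretely, if $(Q_W,\omega_W)$ is a projective Cartan connection over a manifold $W$ of dimension $d$ (so its $\tilde l=d+1$), then the castling $m=1\mapsto \tilde l-1=d$ has base $\mathcal L(W)/GL(1)$, the projective frame bundle of $W$; this is the content of the last line of the proof of Proposition \ref{cast of geo}, applied to $(Q_W,\omega_W)$. Applying it with $W=M$, $d=n$, $\tilde l=n+1$, the base of $(Q\times PL(n),*\omega\times\Lambda_1)$ is $\mathcal L(M)/GL(1)$, whose points are linear frames of $T_pM$ up to scalar, i.e. projective frames. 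This proves the first assertion, $M_n\cong$ projective frame bundle of $M$.

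Next I would observe that $(Q\times PL(n),*\omega\times\Lambda_1)$ is \emph{itself} a projective Cartan connection over $W=M_n$: it is a subgeometry of $P(\R^{n+1}\otimes\R^n)=P(\R^{\,n^2+n})$ and $\dim M_n=n^2+n-1=\tilde l-1$. Its first castling is $(Q'\times PL(n^2+n-1),*\omega'\times\Lambda_1)$ with $Q'=Q\times PL(n)$ and $\omega'=*\omega\times\Lambda_1$; writing $*\omega'=*(*\omega\times\Lambda_1)=\omega\times*\Lambda_1$ factorwise, this is precisely the third connection $(Q\times PL(n)\times PL(n^2+n-1),\omega\times*\Lambda_1\times\Lambda_1)$. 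By the principle above its base $M_{n\times(n^2+n-1)}$ is $\mathcal L(M_n)/GL(1)$, the projective frame bundle of $M_n$. To match this with $\widetilde{\mathcal L}_{n\times(n^2+n-1)}$, fix $u_1\in M_n$ over $p$, a projective frame of $T_pM$. The projective connection on $M_n$ splits $T_{u_1}M_n$ into its horizontal part $\cong T_pM$ and its vertical part, which is canonically the Lie algebra $\sll(n)$ of the structure group $PL(n)$ and hence, through $u_1$, identified with $\sll(T_pM)$; the semidirect structure $T_pM\rtimes\sll(T_pM)$ records the action of the vertical on the horizontal directions. Thus a projective frame of $T_{u_1}M_n$ is the same datum as a projective frame $u_2$ of $T_pM\rtimes\sll(T_pM)$, and $(u_1,\text{frame of }T_{u_1}M_n)\mapsto(u_1,u_2)$ gives the bundle isomorphism $M_{n\times(n^2+n-1)}\cong\widetilde{\mathcal L}_{n\times(n^2+n-1)}$ over $M$.

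For the last assertion I would invoke Proposition \ref{base space of successive Cartan conne}: removing the $PL(n)$ factor from the third connection produces the fourth connection $(Q\times PL(n^2+n-1),\omega\times\Lambda_1)$ over $L=M_{n^2+n-1}$, and exhibits $N:=M_{n\times(n^2+n-1)}$ as a $PL(n)$-bundle over $M_{n^2+n-1}$ whose structural $PL(n)$-action is, in the coordinates $[u,A_1,A_2]\cdot B=[u,B^{-1}A_1,A_2]$ of that proof, carried entirely by the $u_1$-slot. Passing to $N/PL(n)$ therefore forgets $u_1$ and retains only $u_2$, a projective frame of the intrinsic bundle $T_pM\rtimes\sll(T_pM)$; this gives $M_{n^2+n-1}\cong\widetilde{\mathcal L}_{n^2+n-1}$.

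The main obstacle is the compatibility implicit in the last two paragraphs: that the identification $T_{u_1}M_n\cong T_pM\rtimes\sll(T_pM)$ is canonical enough that the right $PL(n)$-action on $M_n$, pushed forward to frames, leaves the frame $u_2$ of the \emph{intrinsic} space $T_pM\rtimes\sll(T_pM)$ invariant modulo the change of reference, so that the quotient $N/PL(n)$ is genuinely $\widetilde{\mathcal L}_{n^2+n-1}$. I would settle this by comparing the presentations of $u_2$ attached to $u_1$ and to $u_1\cdot a$ ($a\in PL(n)$): they must differ by the $\mathrm{Ad}(a)$-action on $\sll(T_pM)$ and the $a$-action on $T_pM$, which is exactly the change absorbed by the $PL(n)$-action formula above, using the explicit local trivialization $\Phi$ constructed in the proof of Proposition \ref{base space of successive Cartan conne}.
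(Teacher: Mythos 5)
Your proposal is correct in substance, but it is organized differently from the paper's own argument, essentially running the logic in the opposite direction. The paper proves the proposition by a direct group-theoretic computation: it exhibits $M_{n\times(n^2+n-1)}$ as the quotient of the reduced bundle $h(\mathcal{L}(M))\times PL(n)\times PL(n^2+n-1)$ by the isotropy group, computes explicitly that the isotropy representation restricted to $GL(n)$ is $\rho_{n^2+n-1}(\iota(A)\otimes *\bar{A})=\overline{\mathrm{diag}(A,\mathrm{Ad}(A))}$ acting on $M(1,n)\oplus\sll(n)$, and then writes down the isomorphism $[h(x),g_1,g_2]\mapsto(q(x)g_1^{-1},\,q(x,({E'}_j^k)_x)g_2^{-1})$, whose well-definedness is exactly the equivariance $q(xA,({E'}_j^k)_{xA})=q(x,({E'}_j^k)_x)\rho_{n^2+n-1}(\iota(A)\otimes *\bar{A})$; the statement that $M_{2\times 5}$ is the projective frame bundle of $M_2$ is deduced in the paper only \emph{after} the proposition, as a corollary. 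You instead take the iterated frame-bundle description as the starting point (which is legitimate, since the last part of Proposition \ref{cast of geo} applies to any subgroup $G\subset PL(l)$ and in particular to $PL(n+1)\times PL(n)\subset PL(n^2+n)$), and then convert projective frames of $T_{u_1}M_n$ into projective frames of $T_pM\rtimes\sll(T_pM)$ by splitting $T_{u_1}M_n$ into horizontal and vertical parts. Two remarks on that step. First, the splitting comes from a principal connection on $\mathcal{L}(M)\to M$, i.e.\ an affine connection on $M$ representing $[\chi]$, not from ``the projective connection on $M_n$'' as you write; the resulting isomorphism depends on this auxiliary choice (harmlessly, since only the isomorphism class is asserted), whereas the paper's map is choice-free. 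Second, the ``main obstacle'' you flag at the end --- that the reference identification of the vertical space with $\sll(T_pM)$ transforms by $\mathrm{Ad}$ and the horizontal part by $A$ under the right $GL(n)$-action, so that the $u_2$-slot is genuinely $PL(n)$-invariant and the quotient is $\widetilde{\mathcal{L}}_{n^2+n-1}$ --- is precisely the paper's computation of $\rho_{n^2+n-1}$ in disguise; you identify the right verification but defer it, so to make the argument complete you should carry out that one computation (it is the same two-line check in either formulation). With that done, your route gives a clean alternative proof and has the advantage of making the geometric meaning of the fiber $T_pM\rtimes\sll(T_pM)$ transparent.
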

\begin{proof} 
By definition $Q$ is a principal fiber bundle over $M$ with structure group $PL(n)_v$, where  
$v = (1,0, \cdots, 0)$ is an element of $V_{1,n}$. From the argument of $\S$ 2.1  
we have the injection 
$h: \mathcal{L}(M) \hookrightarrow Q$ corresponding to the injection $\iota: GL(n) \hookrightarrow PL(n+1)_v$, which is defined by 
\[\iota: A \mapsto 
\begin{pmatrix}
1 & 0 \\
0 & A 
\end{pmatrix}.  
\]  
By castling transformation of $(Q, \omega)$ we obtain the Cartan connection 
$(Q \times PL(n), *\omega \times \Lambda_1)$ 
 over $M_n$ whose structure group is $PL(n+1) \times PL(n)_{<v^\perp>}$, where 
$v^\perp$ $=$  
$
\begin{pmatrix}
0 \\
\hline
I_n
\end{pmatrix} 
$ 
is an element of $V_{n, n+1}$. 
Then $\iota(GL(n)) \times PL(n)_{v^\perp}$ is given by the set $\{(*\iota(A), {A})\}$. 
The manifold $h(\mathcal{L}(M)) \times PL(n)$ is a principal fiber bundle over $M_n$ with structure group 
$\iota(GL(n)) \times PL(n)_{v^\perp}$, and gives a reduction of $Q \times PL(n)$.  
Then we have the following bundle isomorphism 
$h(\mathcal{L}(M)) \times PL(n)/\iota(GL(n)) \times PL(n)_{v^\perp}$ 
$\to$ 
$\mathcal{L}(M)/GL(1)$ defined by    
\[ 
[h(x), g] \mapsto q(x) g^{-1},   
\]
where $q$ is the projection $\mathcal{L}(M) \to \mathcal{L}(M)/GL(1)$. 
Note that the action of $\iota(GL(n)) \times PL(n)_{v^\perp}$ on $h(\mathcal{L}(M)) \times PL(n)$ is given by   
$(h(x), g) \cdot (*\iota(A), {A})$ $=$ $(h(x)\iota(A), g {A})$.  
Thus the base space $M_n$ is isomorphic to the projective frame bundle of $M$.  
Concerning the Cartan connection 
$(Q \times PL(n) \times PL(n^2 + n -1), \omega \times *\Lambda_1 \times \Lambda_1)$ over 
$M_{n \times (n^2 + n -1)}$, 
the structure group of $Q \times PL(n) \times PL(n^2 + n -1)$ is 
$PL(n+1) \times PL(n) \times PL(n^2 + n -1)_{w^\perp}$, where  
$w^\perp$ is a fixed projective frame of the vector space 
$ 
\begin{pmatrix}
M(1,n) \\ 
\hline 
\sll(n) 
\end{pmatrix}. 
$ 
When we are given a base $x$ of a vector space $V$, we denote by $q(x)$ the projective frame, 
then $q$ gives the projection from the linear Stiefel manifold to the projective Stiefel manifold corresponding to 
the natural projection $GL(n) \to PL(n)$. 
Then the subgroup $\iota(GL(n)) \times PL(n) \times PL(n^2 + n -1)_{w^\perp}$ is given by the set 
$\{(\iota(A), *{A}, \rho_{n^2 + n -1}(\iota(A) \otimes *{A} ) )\}$, where $\rho_{n^2 + n -1}(\iota(A) \otimes *{A})$ 
is expressed as the matrix 
\[
{
\begin{pmatrix} 
A & 0 \\ 
0 & Ad(A) 
\end{pmatrix}}
\]   
with respect to the basis 
\[
\{ 
\begin{pmatrix} 
e_i \\ 
\hline 
0 
\end{pmatrix} 
 (1 \leq i \leq n), 
\begin{pmatrix}  
0 \\ 
\hline 
E_j^k - \delta_j^k E_n^n  
\end{pmatrix} 
 (1\leq  i,j \leq n) 
\}. 
\]  
By using a base $x = \{X_1, \cdots, X_n\}$ of $T_pM$ and $Y = \sum Y^i_j E_i^j \in \sll(n)$ we define an 
element $Y_{x} \in \sll(T_pM)$ by $Y_{x} :=  x \circ Y \circ x^{-1}$, where we regard $x$ as a linear isomorphism.  
That is to say $Y_x$ is the map  
$Y_{x}:X_k \mapsto \sum Y_k^i X_i$.    
Then for $A \in GL(n)$ we have $Y_{x A} = Ad(A)(Y_x)$. We denote by ${E'}_j^k$ the element 
$E_j^k - \delta_j^k E_n^n$ of $\sll(n)$. 
Now we define the map $\phi: h(\mathcal{L}(M)) \times PL(n) \times PL(n^2 + n -1)$ $\to$ 
$\widetilde{\mathcal{L}}_{n \times (n^2 + n -1)}$ by 
\[
[h(x), g_1, g_2] \mapsto \{ q(x) {g_1}^{-1}, q(x, ({E'}_j^k)_x){g_2}^{-1})\} 
\] 
We show that the map $\phi$ is well defined.  
We put 
\[(h(x'), g_1', g_2') := (h(x), g_1, g_2) \cdot (\iota(A), *{A}, \rho_{n^2 + n -1} 
(\iota(A) \otimes  *{A})),\]  
which is equal to $(h(x A), g_1 {A}, g_2 \rho_{n^2 + n -1}(\iota(A) \otimes *{A}))$. 
Then we have 
\begin{eqnarray*} 
&& \{q(x') {g_1'}^{-1}, q(x', ({E'}_j^k)_{x'}){g_2'}^{-1})\} \\ 
&=& \{q(x A) (g_1 {A})^{-1}, q(x A, ({E'}_j^k)_{x A})
(g_2 \rho_{n^2 + n -1}(\iota(A) \otimes *{A}))^{-1} \}. 
\end{eqnarray*} 
The last expression is equal to $\{q(x) {g_1}^{-1}, q(x, ({E'}_j^k)_x){g_2}^{-1})\}$ 
since we have 
\[q(x A, ({E'}_j^k)_{x A}) = q(x, ({E'}_j^k)_x) \rho_{n^2 + n -1}(\iota(A) \otimes *{A}).\]   
Moreover $\phi$ is a bundle isomorphism.  
Likewise we can show that $M_{n^2 + n -1}$ is isomorphic to $\widetilde{\mathcal{L}}_{n^2 + n -1}$.  
\end{proof}  

Now we explain how these manifolds $M_{k_1 \times \cdots \times k_j}$ are related in the case of $n=2$. 
From Propositions \ref{example of base space} and \ref{cast of geo} 
$M_2$ is a projective frame bundle of $M_1$ and $M_{2 \times 5}$ is a 
projective frame bundle of $M_2$. Thus $M_2$ has the right action of $PL(2)$ and 
this action gives rise to the action of $PL(2)$ on the frame bundle of $M_2$ by the differential. 
Thus $PL(2)$ naturally acts on the projective frame bundle $M_{2 \times 5}$ of $M_2$. 
From Proposition \ref{base space of successive Cartan conne} the quotient $M_{2 \times 5}/PL(2)$ is equal to 
$M_{5}$.  
Furthermore Cartan connections $(Q, \omega)$, $(Q \times PL(2), *\omega \times \Lambda_1)$ and 
$(Q \times PL(2) \times PL(5), \omega \times *\Lambda_1 \times \Lambda_1)$ induce projective Cartan connections and $(Q \times PL(5), \omega \times \Lambda_1)$ induces Grassmannian Cartan connections. 
The same result holds true about the general dimension $n$. 
If we continue the successive castling transformations we can obtain the following tree, where    
we only describe the base spaces. We abbreviate 
$M_{k_1 \times \cdots \times k_j}$ to $k_1 \times \cdots \times k_j$. 
If a Cartan connection over a base space induces a Grassmannian structure of type $(\beta, \alpha)$ then 
we write $GL(\beta) \otimes GL(\alpha)$ under the base space. If a Cartan connection induces a projective structure,  then we write nothing.  
\[  \xymatrix@C-1.5cm@R-2mm{  
    && {{{\color{black} 2 \times 29 \times 169}}} \ar[lddd]  \ar[rddd] \ar[rd] && 
      {{{\color{black} 2  \times 169 \times 985 }}} 
      \ar[ld]  \ar[rd]      &&&           \\
      &&& \underset{GL(985) \otimes GL(29)}{{
    \underline{{\color{black}2 \times 169}}}} && \cdots && \\ 
     {{{\color{black}2 \times 5 \times 29}}} \ar[dddd] \ar[rddd] \ar[rd] & & & & 
     {{{\color{black} 29 \times 169 \times 14701 }}}  \ar[ld] \ar[rd]     & && \\
	& \underset{GL(169) \otimes GL(5)}{{\underline{{\color{black}2 \times 29}}}} && 
	\underset{GL(14701)  \otimes GL(2)}{{\underline{{\color{black} 29 \times 169}}}}  && \cdots &&\\
	&& {{{\color{black}5 \times 29 \times 433}}} \ar[ld] & & 
	{{{\color{black}13 \times 34 \times 1325}}} \ar[dd] \ar[rd] \ar[ld]    & && \\
	& \underset{GL(433) \otimes GL(2)}{{\underline{{\color{black}5 \times 29}}}} && 
	\underset{GL(51641)  \otimes GL(34)}{{\underline{{\color{black} 13 \times 1325}}}}  & & 
    \underset{GL(135137)  \otimes GL(13)}{{\underline{{\color{black} 34 \times 1325}}}} && \\ 
    {{{\color{black}2 \times 5}}} \ar[d] \ar[rd] & & 
    {{{\color{black}5 \times 13}}} \ar[ld] \ar[rd] & & 
    {{{\color{black}13 \times 34}}} \ar[ld] \ar[rd] 
    & &  {{{\color{black} 
    34 \times 89}}} \ar[ld] \ar[rd] & \\ 
    {{{\color{black}2}}} \ar[d] & 
     \underset{GL(13) \otimes GL(2)}{{\underline{{\color{black}5}}}} & & 
    \underset{GL(34) \otimes GL(5)}{{\underline{{\color{black}13}}}} & & 
    \underset{GL(89) \otimes GL(13)}{{\underline{{\color{black}34}}}}  
    & & \cdots \\ 
	{{{\color{black}1}}} &&&&&&& \\
    }
\] 
The relation of the base spaces of the tree is completely described by Proposition \ref{base space of successive Cartan conne}. 
The all underlined manifolds  
admit a Grassmannian structure.  If the given projective structure on $M$ is projectively flat, then 
manifolds which are not underlined  
admit 
a flat projective structure and underlined manifolds admit a flat Grassmannian structure.   
Especially a underlined manifold $M_{k_1 \times \cdots \times k_{j-1}}$  
admits a Grassmannian structure of 
type $(\beta, \alpha)$ 
which is given by an extension of the bundle $\mathcal{L}(M) \times \prod_{i=1}^{j-1} PL(k_i)$ over 
$M_{k_1 \times \cdots \times k_{j-1}}$.  
The structure group of $\mathcal{L}(M) \times \prod_{i=1}^{j-1} PL(k_i)$ is isomorphic to $GL(2)$. This $GL(2)$-bundle gives a reduction of 
$GL(3 k_1 \cdots k_{j-1} - k_j) \otimes GL(k_j)$-structure 
on $M_{k_1 \times \cdots \times k_{j-1}}$.  
We can prove this assertion generally as follows: we use the same notations in Proposition 
\ref{base space of successive Cartan conne}, thus $M$ admits a Grassmannian Cartan connection of type 
$(l-\alpha, \alpha)$, and $N$ is a fiber bundle $M_{k_1 \times \cdots \times k_j}$ equipped with a projective structure. 
Denote by $L$ the quotient $N/PL(k_j)$ and by $P$ the bundle $Q \times H$ over 
$L$ with structure group $PL(l) \times H_{<w>}$. Now we assume that $l k_1 \cdots k_{j-1} -k_j \neq 1$, hence 
$L$ admits a Grassmannian structure $P_tL$.  
The Lie group homomorphism $\rho: PL(l) \times H_{<w>} \to GL(M(l-\alpha, \alpha) \times  \prod_{i=1}^{j-1} \sll(k_i))$ is the isotropy representation of 
$PL(l) \times H/ PL(l) \times H_{<w>}$. 
The quotient space $(P/ker\rho, \theta)$ can be considered as a subbundle of $P_tL$, and we have the natural projection $\rho: P \to \widetilde{P}$. The group $PL(l)_{<v>}$ has the subgroup $G_0$ and 
the restriction of the isotropy representation $\rho$ to $G_0 \times H_{<w>}$ is injective. 
The bundle $P$ has the subbundle $h(P_tM) \times H$ with structure group 
$G_0 \times H_{<w>}$.  
Hence the restriction of $\rho: P \to \widetilde{P}$ to $h(P_tM) \times H$ is injective. 
Thus we obtain the sequence of reduction $(h(P_tM) \times H, \theta)$ $\subset$ 
$(\widetilde{P}, \theta)$ $\subset$ $P_tL$ corresponding to the sequence $G_0 \times H_{<w>}$ 
$\subset$ $\rho(PL(l) \times H_{<w>})$ 
$\subset$ $GL(l k_1 \cdots k_{j-1} - k_j) \otimes GL(k_j)$. 
Hence $L$ admits a reduction $h(P_tM) \times H$ of the Grassmannian structure $P_tL$ of $L$. 

\medskip
\noindent
{\bf Case of Lie groups} 

\medskip
\noindent 
In the case of Lie groups the base spaces obtained by successive castling transformations are described more explicitly. 
Let $(\mathcal{L}(L), [\chi])$ be a projective structure on a $n$-dimensional Lie group $L$. Then we can construct a projective Cartan connection $(Q, \omega)$, where $Q$ is a principal fiber bundle over $M$ with structure group 
$PL(n+1)_{v}$, and we denote by $h$ a injective bundle map from $P_L$ to $Q$.  
Then by a successive castling transformations of $(Q, \omega)$ we obtain a Cartan connection 
$(Q \times \prod_{i=1}^j PL(k_i), \omega' \times \prod_{i=1}^j(\Lambda_1)_i)$ over a manifold $N$ whose type is a subgeometry of the projective space $P(\R^{n+1} \otimes \bigotimes_{i=1}^j \R^{k_i})$.  
Then the base space is described as follows:

\medskip
\begin{prop}
$N$ is isomorphic to the product 
$L \times \prod_{i=1}^j PL(k_i)$.   
\end{prop}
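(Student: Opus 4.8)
The plan is to trivialize the principal bundle $N \to L$ by producing a global section, which I will reduce to the triviality of the Cartan bundle $Q$ over $L$. By Proposition \ref{base space of successive Cartan conne}(1) we already know that $N$ is a principal $\prod_{i=1}^j PL(k_i)$-bundle over $M = L$, equipped with the local trivialization
\[\Phi: \pi_N^{-1}(U) \to U \times \prod_{i=1}^j PL(k_i), \quad [z, A_1, \ldots, A_j] \mapsto (\pi(z), f_1'(\phi(z)) A_1^{-1}, \ldots, f_j'(\phi(z)) A_j^{-1}),\]
where $\phi$ is read off from a local trivialization $\pi^{-1}(U) \to U \times PL(n+1)_{<v>}$ of $Q$ and the $f_i'$ are the projective representations of $PL(n+1)_{<v>}$ furnished by Lemma \ref{castling lemma about isotropy}. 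So it suffices to make $\phi$ global, i.e. to exhibit a global section of $Q$ and read off the corresponding global coordinate $\phi: Q \to PL(n+1)_{<v>}$; then $\Phi$ is defined over all of $L$ and is the desired bundle isomorphism $N \to L \times \prod_{i=1}^j PL(k_i)$ intertwining the $\prod_{i=1}^j PL(k_i)$-actions.

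The key step, where the Lie group hypothesis enters, is the triviality of $Q$. Since $[\chi]$ is a projective structure, the underlying $G$-structure $P_L$ is the full frame bundle $\mathcal{L}(L)$, and $Q$ is its extension $Q = \mathcal{L}(L) \times_{GL(n)} PL(n+1)_{<v>}$ along $\iota: GL(n) \hookrightarrow PL(n+1)_{<v>}$ and the bundle map $h$. On a Lie group the frame bundle is canonically trivial: fixing a frame $\tilde{o}$ of $T_eL$ and left-translating it gives the global section $\tilde{j}: L \to \mathcal{L}(L)$ already used in Proposition \ref{Ricci of PL(m)}, so $\mathcal{L}(L) \cong L \times GL(n)$. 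Composing with $h$ produces a global section $s := h \circ \tilde{j}: L \to Q$; writing each $z \in Q$ uniquely as $s(\pi(z)) \cdot g$ defines a global $\phi$ by $\phi(z) := g$, whence $Q \cong L \times PL(n+1)_{<v>}$.

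Feeding this global $\phi$ into $\Phi$ completes the proof: $\Phi$ becomes a globally defined diffeomorphism $N \to L \times \prod_{i=1}^j PL(k_i)$ preserving the right $\prod_{i=1}^j PL(k_i)$-action, which is exactly the asserted isomorphism. I expect the only genuine content to be the observation that a Lie group has a left-invariant global framing, forcing $Q$, and hence $N$, to split as a product; the successive $*$-operations hidden in $\omega'$ and in the $(\Lambda_1)_i$ only alter the bookkeeping of the $f_i'$ (via Lemma \ref{castling lemma about isotropy}) and do not affect the triviality argument, so no further computation is required.
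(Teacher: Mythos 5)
Your proposal is correct and follows essentially the same route as the paper: the paper likewise writes every element of $Q$ as $h(x)g_0$, uses the left-invariant framing to identify $\mathcal{L}(L)$ with $L \times GL(n)$ via $x \mapsto (a(x), A(x))$, and then writes down the resulting global trivialization $[h(x)g_0, g_1, \ldots, g_j] \mapsto (a(x), g_1 f_1'(g_0)^{-1} f_1' \circ \imath(A(x))^{-1}, \ldots)$, which is exactly your globalized $\Phi$ up to a left/right bookkeeping convention in the group factors. The only presentational difference is that you phrase the key step as producing a global section $h \circ \tilde{j}$ of $Q$ and then invoking the local trivialization of Proposition \ref{base space of successive Cartan conne}, while the paper writes the isomorphism explicitly and checks well-definedness directly.
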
 
\vspace{-2mm}
\begin{proof} 
The proof is similar to the one of Proposition \ref{base space of successive Cartan conne}. 
The base space $N$ is diffeomorphic to the quotient manifold 
$Q \times \prod_{i=1}^j PL(k_i)/PL(l) \times \prod_{i=1}^j PL(k_i)_w$ and by 
Proposition \ref{castling lemma about isotropy}  the isotropy group $PL(l) \times \prod_{i=1}^j PL(k_i)_w$ 
is given by the set $\{(f_0(g), f_1(g), \ldots, f_j(g)) \mid g \in PL(n+1)_{v}\}$, where $f_0 = id$ or $*$.  
Any element of $Q$ is written by the form $h(x)g_0$ for some $x \in \mathcal{L}(L)$ and 
$g_0 \in PL(n+1)_v$. We denote by $\rho$ a projection from $Q$ to $\mathcal{L}(L)$. 
The frame bundle $\mathcal{L}(L)$ is isomorphic 
to the product $L \times GL(n)$ and an element $x \in \mathcal{L}(L)$ is written as $(a(x), A(x))$. 
Now we construct the map $N \to L \times \prod_{i=1}^j PL(k_i)$ defined by 
\[
[h(x)g_0, g_1, \ldots, g_j] \mapsto (a(x), g_1 f_1'(g_0)^{-1} f_1' \circ \imath(A(x))^{-1}, \ldots, 
g_j f_j'(g_0)^{-1} f_j' \circ \imath(A(x))^{-1}),   
\]
where $f_i' = f_i$ when $(\Lambda_1)_i = \Lambda_1$ and $f_i' = *f_i$ when $(\Lambda_1)_i = *\Lambda_1$. 
Then this map is well defined and a bundle isomorphism.  
\end{proof} 

The Cartan connection 
$(Q \times \prod_{i=1}^{j-1} PL(k_i), \omega' \times \prod_{i=1}^{j-1} (\Lambda_1)_i)$ can be extended to 
Grassmannian Cartan connection, and from the proof of 
\ref{base space of successive Cartan conne} its base space $K$ is also given by the product 
$L \times \prod_{i=1}^{j-1} PL(k_i)$.  
Thus the base space obtained by successive castling transformations 
admitting a Grassmannian structure is also given by a product Lie group.  
Furthermore if the given projective structure $(\mathcal{L}(L), [\chi])$ on $L$ is left invariant (resp. flat) under the group action of $L$, then a projective structure on $N$ is also left invariant (resp. flat), and the Grassmannian 
Cartan connection over $K$ is also left invariant (resp. flat). 

\section{A classification of manifolds obtained by successive \\ castling transformations} 
Let $j, l$ and $\alpha$ be positive natural numbers such that $\alpha \leq  l - \alpha$. 
We consider the equation $(\ast): (l-\alpha)\alpha + k_1^2 + k_2^2 + \cdots + k_j^2 - j =  l k_1 \cdots k_j -1 $. 
We define a castling transformation for a set of positive natural numbers. 

\medskip 
\begin{df} 
Let $l$ be a fixed integer such that $l \geq 3$.  
Let $(k_1, k_2, \ldots, k_j)$ be a set of integers. 
We define an integer by $k_i' := l k_1 \cdots k_{i-1} k_{i+1} \cdots k_j - k_i$ for 
$j \geq 2$, $1 \leq  i \leq j$,  and by $k_{j+1}' := l k_1 \cdots \cdots k_j - 1$. 
If j = 1, we define $k_1'$ to be $l- k_1$. 
We call the set $(k_1, \ldots, k_{i-1}, k_i', k_{i+1}, \ldots, k_j)$ a castling transform of 
$(k_1, k_2, \ldots, k_j)$ at $i$-th position, and $(k_1, \ldots, k_j, k_{j+1}')$ a castling transform 
at $(j+1)$-th position.  We call each $k_i'$ and $k_{j+1}'$ a number obtained by castling transform.    
\end{df} 

\medskip
If $k_i$ is a positive natural number for $1 \leq i \leq j$ and $(k_1,\ldots, k_j)$ satisfies the equation 
$(\ast): (l-\alpha)\alpha + k_1^2 + k_2^2 + \cdots + k_j^2 - j =  l k_1 \cdots k_j -1$, 
then castling transform at any position gives another solution 
$(k_1, \ldots, k_{i-1}, k_i', k_{i+1}, \ldots, k_j)$ for the equation $(\ast)$ and 
$k_i'$ is a positive natural number again.  

We observe that when $j = 1$, $\alpha$ gives a solution for 
the equation $(\ast)$: $(l-\alpha)\alpha + k_1^2 - l k_1 = 0$. We investigate the whole solutions of 
$(\ast)$ given by the successive castling transformations from $\alpha$.  

If we repeat a castling transformation for $(k_1, k_2, \ldots, k_j)$ at the same position twice, then  
we obtain the same set as $(k_1, k_2, \ldots, k_j)$.  From now on we assume that successive castling 
transformations does not include this repetition. Namely  
if a set $\theta := (k_1, k_2, \ldots, k_j)$ is a castling 
transform of a set $\theta'$ at $i$-th position, then 
we only consider a castling transform of $\theta$ at $m$-th position with $m \neq i$.    
A sequence $\theta_1 \rightarrow \cdots \rightarrow \theta_n$ obtained by successive castling transformations 
from $\theta_1$  is said to be reduced if the sequence does not contain any repetition of  castling transformation. 
\medskip 
\begin{lem}\label{the location of cast} 
Let $\theta_1 \rightarrow \cdots \rightarrow \theta_n$ be a reduced sequence obtained by successive castling transformations and 
assume that $\theta_n = (k_1, k_2, \ldots, k_j)$ and $\theta_1 = \alpha$. 
If $k_j$ is a number obtained in the castling transformation $\theta_{n-1} \rightarrow \theta_{n}$,  
then $k_j$ is the unique largest number in $(k_1, k_2, \ldots, k_j)$.  
\end{lem} 
\vspace{-1mm} 
\begin{proof}  
The proof is by induction on $m$ ($n \geq  m \geq 2$). 
We can express $\theta_m$ as $(h_1, h_2, \ldots, h_{N_m})$, where $1 \leq N_m \leq m$ and 
$h_i \geq 2$ ($1\leq  i  \leq N_m$). 
We assume that $h_{N_m}$ is a number obtained by a castling transform of $\theta_{m-1}$ and 
$h_{N_m}$ is the largest number in $\{h_i\}_{1 \leq i \leq N_m}$.      
Then $\theta_{m+1}$ is a castling transform of $\theta_m$ at some $i$-th position with the condition $i \neq N_m$.  
A new number $\kappa$ of $\theta_{m+1}$ obtained by castling transform  
is written as (1) $l h_1 \cdots h_{i-1} h_{i+1} \cdots h_{N_m} - h_i$ or 
(2) $l h_1 \cdots h_{N_m} - 1$, and in both cases $\kappa > h_{N_m}$. The set $\theta_{m+1}$  can be written as $(h_1, \ldots, h_{i-1}, \kappa, h_{i+1}, \ldots, h_{N_m})$ in the case (1) and 
$(h_1, \ldots, h_{N_m}, \kappa)$ in the case (2) . 
Hence $\kappa$ is the unique largest number in $\theta_{m+1}$.    
When $m = 2$, $\theta_2$ can be $(\alpha, l\alpha-1)$ or $l-\alpha$. 
In both cases a number obtained by castling transform is the unique largest number in $\theta_2$. 
Hence the induction proves the lemma. 
\end{proof} 
\begin{prop}\label{fund prop of cast 1} 
Let $(k_1, k_2, \ldots, k_j)$ be a set obtained by successive castling transformations from $\alpha$. 
Then we have $k_i \geq \alpha$ for $1 \leq i \leq j$. 
\end{prop}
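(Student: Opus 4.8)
The plan is to induct on the length $n$ of a reduced sequence $\theta_1 \to \theta_2 \to \cdots \to \theta_n$ produced from $\theta_1 = \alpha$, proving that every entry of the final set $\theta_n$ is $\geq \alpha$. The two base cases are immediate: $\theta_1 = (\alpha)$, while $\theta_2$ is either $(l-\alpha)$ or $(\alpha, l\alpha -1)$, and the bounds $\alpha \geq \alpha$, $l-\alpha \geq \alpha$, and $l\alpha - 1 \geq \alpha$ all hold because of the standing hypotheses $\alpha \leq l-\alpha$ and $l \geq 3$. The only sets of length one occurring in a reduced sequence are these initial ones, since transforming $(l-\alpha)$ again at the first position would repeat a transform and is excluded by the reducedness convention.

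For the inductive step I would assume every entry of $\theta_m = (k_1, \ldots, k_j)$, with $j \geq 2$, is $\geq \alpha$, and then pass to $\theta_{m+1}$. Two cases arise from the definition of a castling transform. If $\theta_{m+1}$ is obtained by appending $k_{j+1}' = l k_1 \cdots k_j - 1$ at position $j+1$, then since every $k_s \geq \alpha \geq 1$ the product satisfies $k_1 \cdots k_j \geq \alpha$, whence $k_{j+1}' \geq l\alpha - 1 \geq 3\alpha - 1 \geq \alpha$; the remaining entries are untouched, so all entries of $\theta_{m+1}$ are $\geq \alpha$.

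The remaining, genuinely delicate case is a transform at an interior position $i$ with $1 \leq i \leq j$, producing $k_i' = l k_1 \cdots k_{i-1} k_{i+1} \cdots k_j - k_i$. Here Lemma \ref{the location of cast} and the reducedness of the sequence enter: the entry of $\theta_m$ created by the previous transform is, by that lemma, the unique largest number of $\theta_m$, and because the sequence is reduced the current transform occurs at a position different from that one. Consequently $k_i$ is not the maximum of $\theta_m$, so the product $\prod_{s \neq i} k_s$ contains the maximal entry as a factor and, all other factors being $\geq 1$, we obtain $\prod_{s \neq i} k_s \geq k_i$. Then
\[
k_i' = l \prod_{s \neq i} k_s - k_i \geq l k_i - k_i = (l-1) k_i \geq 2 k_i \geq 2\alpha \geq \alpha,
\]
and once more the untouched entries retain their lower bound, which completes the induction.

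I expect this interior-transform case to be the crux. The naive quantity $l\prod_{s\neq i}k_s - k_i$ could in principle be small if $k_i$ were the dominant factor, and it is precisely Lemma \ref{the location of cast}, locating the freshly created and strictly largest number, together with the no-repetition convention that rules this out and forces $k_i$ to be dominated by some other factor of the product. The only bookkeeping I would double-check is that length-one sets never occur beyond the base cases in a reduced sequence, so that either the appended-number formula or the interior formula with a nonempty complementary product always applies in the inductive step.
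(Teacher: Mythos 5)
Your proof is correct and follows essentially the same route as the paper's: both arguments rest on Lemma \ref{the location of cast} together with the no-repetition convention to guarantee that the entry being replaced is not the maximum, so that the remaining product dominates it and $k_i' \geq (l-1)k_i \geq \alpha$. The paper merely packages this as strict monotonicity of the sequence of freshly created numbers $\kappa_2 < \kappa_3 < \cdots$ (each entry of any $\theta_m$ being one of the $\kappa_i$) rather than as a direct induction on all entries of $\theta_m$, but the substance is identical.
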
 
\vspace{-1mm}
\begin{proof}
Let $\theta_1 \rightarrow \cdots \rightarrow \theta_n$ be a reduced sequence obtained by successive castling 
transformations from $\theta_1 = \alpha$.  Denote by $\kappa_i$ ($i \geq 2$) a new number of 
$\theta_i$ obtained 
by the castling transform of $\theta_{i-1}$.  Set $\kappa_1 := \alpha$. 
For instance $\theta_2 = (\kappa_1, \kappa_2) = (\alpha, l\kappa_1-1)$, and 
$\theta_3$ $=$ $(\kappa_1, \kappa_2, \kappa_3)$ $=$ $(\alpha, l\kappa_1-1, l\kappa_1 \kappa_2 - 1)$ 
and $\theta_4$ $=$ $(\kappa_1, \kappa_4, \kappa_3)$ $=$ 
$(\alpha, l\kappa_1\kappa_3 - \kappa_2, l\kappa_1\kappa_2 - 1)$.  
Thus we see that $\theta_m$ ($m \geq 2$) 
can be written as $(\kappa_{i_1}, \kappa_{i_2}, \cdots, \kappa_{i_{N_m}})$ where $N_m \leq m$ and  
some $\kappa_{i_l}$ is equal to $\kappa_m$. 
Then $\kappa_{m+1}$ ($m \geq 2$) is equal to 
$l \kappa_{i_1} \cdots \kappa_{i_{s-1}} \kappa_{i_{s+1}} \cdots \kappa_{i_{N_m}} - \kappa_{i_s}$  for some $s$ 
($1 \leq s \leq N_m$) satisfying $i_s \neq  m$ or  $l \kappa_{i_1} \cdots \kappa_{i_{N_m}} - 1$ since  
successive castling transformations do not include a repetition. 
By Lemma \ref{the location of cast} $\kappa_m$ is the largest number in $\theta_m$, thus $\kappa_{m+1} > \kappa_{m}$ 
for $m \geq 2$. 
By definition $\kappa_2$ can be $l- \alpha$ or $l \alpha -1$.  
From assumption $l- \alpha \geq \alpha$. Thus $\kappa_2 \geq \alpha$. 
Hence $\alpha = \kappa_1 \leq  \kappa_2 < \kappa_3 < \cdots < \kappa_n$, which proves the proposition.  
\end{proof} 

From the proof of this proposition, we see that for any given reduced sequence $\theta_1 \rightarrow \cdots \rightarrow \theta_n$ of successive castling transformations from $\theta_1 = \alpha$,  we have $\theta_i \neq \theta_j$ if $i \neq j$ and $i, j \geq 2$.  
The following is important in this section.  
\begin{prop}\label{fund prop of cast 2} 
Let $j$, $l$ and $\alpha$ be positive natural numbers such that $j \geq 2$, $l \geq 3$ and $\alpha \leq  l - \alpha$. 
Let $k_i$ \mbox{\rm(}$1 \leq i \leq j$\mbox{\rm)} be  natural numbers such that $2 \leq k_1 \leq k_2 \leq \cdots \leq k_j$. 
Assume that we have the equality $(\ast): (l-\alpha)\alpha + k_1^2 + k_2^2 + \cdots + k_j^2 - j =  
l k_1 \cdots k_j -1 $.  Moreover assume that $k_j \geq \alpha$.  
Then we have $0 < l k_1 k_2 \cdots k_{j-1} - k_j < k_j$.  
\end{prop}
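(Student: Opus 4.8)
The plan is to read $(\ast)$ as a quadratic equation in the largest unknown $k_j$. Writing $P = k_1\cdots k_{j-1}$, $S = k_1^2+\cdots+k_{j-1}^2$ and $C = (l-\alpha)\alpha + S - j + 1$, equation $(\ast)$ becomes $k_j^2 - lP\,k_j + C = 0$. Its two roots multiply to $C$ and sum to $lP$, so the second root is exactly $k_j' := lP - k_j$, the castling transform of $k_j$ at the $j$-th position, and the number to be controlled is precisely this $k_j'$. For the lower bound I would first note $C>0$: since $\alpha\le l-\alpha$ gives $(l-\alpha)\alpha\ge\alpha^2\ge 1$ and each $k_i\ge 2$ gives $S\ge 4(j-1)$, one gets $C\ge 3j-2>0$; as $k_j k_j' = C>0$ and $k_j>0$, this yields $lP-k_j=k_j'>0$.

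For the upper bound $lP - k_j < k_j$ I would argue by contradiction, assuming $lP\ge 2k_j$. Multiplying by $k_j$ and substituting $(\ast)$ gives $(l-\alpha)\alpha + S + k_j^2 - j + 1 = lP\,k_j \ge 2k_j^2$, hence $k_j^2 \le C$; feeding this back into $(\ast)$ produces
\[ lP\,k_j = (l-\alpha)\alpha + S + k_j^2 - j + 1 \le 2(l-\alpha)\alpha + 2S - 2(j-1) < 2(l-\alpha)\alpha + 2S, \]
where the final strict inequality uses $j\ge 2$. The whole problem is thereby reduced to proving the reverse estimate
\[ (\sharp)\qquad lP\,k_j \ \ge\ 2(l-\alpha)\alpha + 2S, \]
which contradicts the displayed line and finishes the argument.

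The key point, and the main obstacle, is that $(\sharp)$ must be proved as a clean inequality in which $l$ is treated as a free parameter subject only to $l\ge 3$ and $\alpha\le l-\alpha$: one must \emph{not} substitute the value of $l$ forced by $(\ast)$, because after that substitution $(\sharp)$ becomes literally equivalent to the goal and is in fact false for configurations in which all the $k_i$ are comparable. It is exactly the constraint $l\ge 3$ that forces the product $\prod k_i$ to dominate the sum of squares. Concretely I would rewrite the left-over quantity as $l(Pk_j - 2\alpha) + 2\alpha^2 - 2S$, observe $Pk_j\ge 2k_j\ge 2\alpha$ so that $l\ge 3$ gives a lower bound $3Pk_j - 6\alpha + 2\alpha^2 - 2S$, and then use $2\alpha^2 - 6\alpha \ge -4$ for integers $\alpha\ge 1$ to reduce $(\sharp)$ to the purely multiplicative statement $3Pk_j \ge 2S + 4$.

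Finally, using $k_i^2\le k_j k_i$ together with $k_j\ge 2$, this last inequality follows from the elementary estimate $3\prod_{i=1}^{j-1}k_i \ge 2\sum_{i=1}^{j-1}k_i + 2$ for integers $k_i\ge 2$, which I would prove by induction on the number of factors: the base case $3k_1\ge 2k_1+2$ is just $k_1\ge 2$, and the inductive step, after multiplying the hypothesis by the new factor $k\ge 2$, reduces to $(k-1)\sum_i k_i \ge 1$, which is clear. Assembling the chain $3\prod k_i \ge 2\sum k_i + 2 \Rightarrow 3Pk_j\ge 2S+4 \Rightarrow (\sharp)$ contradicts the assumption $lP\ge 2k_j$, so $lP-k_j<k_j$, completing the proof.
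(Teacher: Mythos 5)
Your proof is correct, and it takes a genuinely different and substantially shorter route than the paper's. The paper proves the lower bound by a direct contradiction estimate and then spends several pages on the upper bound: it first establishes $k_j > l\,2^{j-1}-\alpha$ by maximizing $f(k_1,\ldots,k_j)=\sum k_i^2 - l\prod k_i$ over a box (the boundary-value technique borrowed from Sato--Kimura), then splits into the cases $j=2$ and $j\geq 3$ with intermediate bounds $k_2 > k_1+\tfrac{l}{2}$ and $k_j > \tfrac{l}{4}(j-1)k_{j-1}$, and finally reruns a modified version of the whole argument for $\alpha=1$. You replace all of this with two observations: the Vieta identity $k_j(lP-k_j)=C>0$ for the lower bound, and the single unconditional inequality $(\sharp)\colon lPk_j\geq 2(l-\alpha)\alpha+2S$ for the upper bound, reduced via $l\geq 3$, $2\alpha^2-6\alpha\geq -4$ (i.e.\ $2(\alpha-1)(\alpha-2)\geq 0$ for integers) and $k_i^2\leq k_ik_j$ to the product-versus-sum estimate $3\prod_{i<j}k_i\geq 2\sum_{i<j}k_i+2$, proved by a one-line induction. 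I checked the chain: under the contradiction hypothesis $lP\geq 2k_j$ one gets $k_j^2\leq C$ and hence $lPk_j\leq 2C < 2(l-\alpha)\alpha+2S$, which contradicts $(\sharp)$; and $(\sharp)$ itself uses only $l\geq 3$, $P\geq 2$, $k_j\geq\alpha$ and $k_i\geq 2$, never the equation $(\ast)$, so there is no circularity (your cautionary remark about not substituting the value of $l$ forced by $(\ast)$ is exactly the right point — the tight case $l=3$, $\alpha\in\{1,2\}$, all $k_i=2$ gives equality in $(\sharp)$ but the derived inequality is strict because $j\geq 2$). What your approach buys is uniformity — no case split on $j$ or on $\alpha$ — at the price of the slightly delicate bookkeeping around the equality cases; what the paper's approach buys is the stronger intermediate information $k_j>l\,2^{j-1}-\alpha$, which quantifies how fast the castling transforms grow, though that is not needed for the statement itself.
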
 
\vspace{-1mm}
\begin{proof} 
Put $h_j :=  l k_1 k_2 \cdots k_{j-1} - k_j$.  
Suppose that we have the equality $(\ast): (l-\alpha)\alpha + k_1^2 + k_2^2 + \cdots + k_j^2 - j = 
l k_1 \cdots k_j -1$ and the inequality $2 \leq k_1 \leq k_2 \leq \cdots \leq k_j$. 
Now we assume $l k_1 k_2 \cdots k_{j-1} - k_j \leq 0$. Then $l k_1 k_2 \cdots k_{j-1} k_j \leq k_j^2$. 
From $(\ast)$ we have $(l-\alpha)\alpha +1 + \sum_{i=1}^j k_i^2 - j -k_j \leq 0$. 
Thus 
\vspace{-2mm}
\begin{eqnarray*}
0 & \geq & (l-\alpha)\alpha + 1 - j + k_1^2 + \cdots k_{j-1}^2 \\
& \geq &  (l-\alpha)\alpha + 1 - j + 4(j-1) \\
& = &  3 j + (l-\alpha)\alpha -3 \geq  3j - 1.  
\end{eqnarray*} 
Since $j \geq 2$, the last expression is greater than or equal to $0$. 
This is a contradiction. Hence $0 < l k_1 k_2 \cdots  k_{j-1} - k_j$.  
 
Next we divide the proof into the two cases: $\alpha = 1$ and $\alpha \geq 2$.  
Firstly we consider the case $\alpha \geq 2$. Then $l$ must satisfy $l \geq 4$. 
Now we prove that if 
$\alpha \leq k_j \leq l 2^{j-1}-\alpha$, and $2 \leq k_i \leq l 2^{j-1}-\alpha$ for $1 \leq i\leq j-1$, then we have 
$\sum_{i=1}^{j} {k_i}^2 - l \prod_{i=1}^{j} {k_i} \leq 4(j-1) + \alpha^2 - l 2^{j-1} \alpha$. 
We prove this by using the idea and technique of the proof of \cite[Lemma 2 in p.42]{sato-kimura}.  
Put $b := l 2^{j-1}-\alpha$, and 
assume $\alpha \leq k_j \leq b$ and $2 \leq k_i \leq b$  for $1 \leq i\leq j-1$. 
We put $f(k_1, \ldots, k_j) := \sum_{i=1}^{j} {k_i}^2 - l \prod_{i=1}^{j} {k_i}$. 
For $1 \leq \mu \leq j-1$  we set 
\[M^\alpha_{\mu} := f(\underbrace{2,\ldots,2}_{\mu},\underbrace{b, \cdots, b}_{j-\mu-1}, \alpha) \ \mbox{and} \ 
M^b_{\mu} := f(\underbrace{2,\ldots,2}_{\mu},\underbrace{b, \cdots, b}_{j-\mu-1}, l 2^{j-1} - \alpha). \] 
Then we have $M^\alpha_{\mu} = 2^2 \mu +(j-\mu-1)b^2 + \alpha^2 - l 2^\mu b^{j-\mu-1} \alpha$, and  
$M^b_{\mu}$ $=$ $2^2 \mu +(j-\mu)b^2 - l 2^\mu b^{j-\mu}$. 
Since $M^\alpha_{j-1} - M^\alpha_{\mu} = (j-\mu-1)(2^2 -b^2) - l 2^\mu \alpha (2^{j-\mu-1} - b^{j-\mu-1})$, 
we obtain 
\begin{eqnarray*}
&& \frac{M^\alpha_{j-1} - M^\alpha_{\mu}}{b-2} \geq -(j-\mu -1)(b + 2) + l \alpha (j-\mu-1) 2^{j-2} \\ 
&=& (j- \mu -1)\{- b - 2 + l \alpha 2^{j-2} \} \geq 0.  
\end{eqnarray*}
On the other hand since 
$M^\alpha_\mu - M^b_{\mu} = \alpha^2 - b^2 + l 2^\mu b^{j-\mu-1} \{b - \alpha\}$, 
we obtain 
\begin{eqnarray*}
\frac{M^\alpha_\mu - M^b_{\mu}}{b-\alpha} = -(\alpha + b) + l 2^\mu b^{j-\mu-1}.    
\end{eqnarray*} 
When $\mu = j-1$, this value is equal to zero. Thus we have $M^\alpha_\mu - M^b_{\mu} \geq 0$. 
Since $f$ attains the maximum at the boundary points, we obtain the desired assertion $f \leq M^\alpha_{j-1}$.   

Now from the equality $(\ast)$ we have 
\vspace{-2mm}
\begin{eqnarray*}
0 &=& (l-\alpha)\alpha -(j-1) + k_1^2 + k_2^2 + \cdots + k_j^2  - l k_1 \cdots k_j \\ 
&\leq & (l-\alpha)\alpha -(j-1) + 4(j-1) + \alpha^2 - l 2^{j-1} \alpha \\
&=& (l-\alpha)\alpha  + 3(j-1) + \alpha(\alpha - l 2^{j-1}) \\
&=& (l-\alpha)\alpha  + 3(j-1) + \alpha(\alpha - l) - \alpha l (2^{j-1}-1) 
\end{eqnarray*}
Since $j \geq 2, l \geq 4, \alpha \geq 2$ the last expression is negative. 
However this is a contradiction. It follows that from the assumption $k_j \geq \alpha$ we obtain 
$k_j > l 2^{j-1} - \alpha$. 

From now on we consider the case $j =2$. 
Firstly we show that $k_2 > k_1 + \frac{l}{2}$. 
Assume that $k_2 \leq  k_1 + \frac{l}{2}$. 
Then we have 
\vspace{-1mm}
\begin{eqnarray*}
0 &=& (l-\alpha)\alpha -1 + k_1^2 + k_2^2 - l k_1 k_2 \\
&\leq & (l-\alpha)\alpha -1 + 2 k_2^2 - l(k_2 -\frac{l}{2})k_2. 
\end{eqnarray*} 
\vspace{-1mm}
Since $k_2 > 2 l -\alpha$, we have   
\vspace{-2mm} 
\begin{eqnarray*} 
0 &<& (l-\alpha)\alpha -1 + (2-l)(2 l-\alpha)^2 + \frac{l^2}{2}(2 l-\alpha) \\ 
&=& (l-\alpha)\alpha -1 + (2 l -\alpha)\{-\frac{3}{2}l^2 -2 \alpha  + l(4 + \alpha)\}. 
\end{eqnarray*} 
\vspace{-1mm} 
About the part of this expression  we have 
\[-\frac{3}{2}l^2 -2 \alpha  + l(4 + \alpha) = l(-\frac{3}{2}l +4) + \alpha (l-2).\]  
Since $\alpha \leq \frac{l}{2}$, the last expression is less than or equal to  
$l(-l +3) \leq -l$. 
It follows that $(l-\alpha)\alpha + (2 l -\alpha)(-l) <0$. 
This is a contradiction. Therefore $k_2 > k_1 + \frac{l}{2}$. 
 
Finally assume that $h_2 \geq  k_2$. This condition is equivalent to $l k_1 \geq 2 k_2$. 
Then from the equation $(\ast)$ we have 
\begin{eqnarray*} 
0 &=& (l-\alpha)\alpha -1 + k_1^2 + k_2^2 - l k_1 k_2 \\ 
&\leq & (l-\alpha)\alpha -1 + k_1^2 + k_2^2 - 2 k_2^2 \\
&=& (l-\alpha)\alpha -1 + k_1^2 - k_2^2 \\
&<& (l-\alpha)\alpha -1 + k_1^2 - (k_1 + \frac{l}{2})^2 \\[-1mm]
&\leq &  (l-\alpha)\alpha -1 - (l k_1 + \frac{l^2}{4}) \\[-1mm]
&\leq & \frac{l^2}{4} - \frac{l^2}{4} -1 - l k_1 < 0. 
\end{eqnarray*} 
This is a contradiction. Hence $h_2 < k_2$, which is our assertion in this Proposition. 

\smallskip
Next we consider the case $j \geq 3$. 
Now assume that $k_j \leq \frac{l}{4}(j-1)k_{j-1}$.   
Then we have 
\vspace{-2mm}
\begin{eqnarray*}
0 &=& (l-\alpha)\alpha  - (j-1)  + k_1^2 + \cdots + k_j^2 -l k_1 \cdots k_j \\
&\leq &  (l-\alpha)\alpha - (j-1) + k_1^2 + \cdots + k_j^2 - \frac{4}{j-1} k_1 \cdots k_{j-2} k_j^2. 
\end{eqnarray*}
\vspace{-1mm}
Since we have $\frac{4}{j-1} k_1 \cdots k_{j-2} \geq \frac{4}{j-1} 2^{j-2} \geq j+1$, this yields 
\vspace{-1mm}
\[(l-\alpha)\alpha - (j-1) + k_1^2 + \cdots + k_j^2 - \frac{4}{j-1} k_1 \cdots k_{j-2} k_j^2 \leq 
(l-\alpha)\alpha - (j-1) - k_j^2. \] 
\vspace{-1mm}
The inequality $k_j > l 2^{j-1} - \alpha$ gives 
\begin{eqnarray*}
0 &<& (l-\alpha)\alpha - (j-1) - (l 2^{j-1} - \alpha)^2 \\
& \leq & \frac{l^2}{4} - (j-1) - (3l)^2 < 0. 
\end{eqnarray*} 
This is a contradiction.  Hence we obtain  $k_j > \frac{l}{4}(j-1)k_{j-1}$. 
 
Finally suppose that $h_j \geq  k_j$. This condition is equivalent to 
$k_j \leq \frac{l}{2}k_1 \cdots k_{j-1}$.  
Then combining this assumption with the equation $(\ast)$ yields 
\begin{eqnarray*}
0 &=& (l-\alpha)\alpha  - (j-1)  + k_1^2 + \cdots + k_j^2 -l k_1 \cdots k_j \\
&\leq &  (l-\alpha)\alpha - (j-1) + k_1^2 + \cdots + k_j^2 - 2 k_j^2 \\
&<& (l-\alpha)\alpha - (j-1) + k_1^2 + \cdots + k_{j-1}^2 - (\frac{l}{4})^2 (j-1)^2 k_{j-1}^2 \\
&\leq & (l-\alpha)\alpha - (j-1) -(j-1)(\frac{l^2}{4^2}(j-1) - 1)k_{j-1}^2.  
\end{eqnarray*}
Since $j \geq  3$ and $k_{j-1} \geq 2$, the last expression is less than or equal to 
\begin{eqnarray*} 
&& (l-\alpha)\alpha - (j-1) - (l^2 - 8) \\ 
&=& (l - \alpha)\alpha - (j-1) - (l^2 - 8) \\
&\leq &  (l - \alpha)\alpha - l^2 + 6  \\
&=& -l(l - \alpha) - \alpha^2 + 6 < 0.  
\end{eqnarray*} 
This is a contradiction, which concludes $h_j <  k_j$. 

Now we consider the case $\alpha =1$. In this case we have to also consider the case $l=3$. 
We can prove the inequality $h_j < k_j$ by almost the same way as the case $\alpha \geq 2$. 
In the following we give the outline of the proof with emphasizing the difference between the cases 
$\alpha =1$ and $\alpha \geq 2$.  

Firstly if we assume $2 \leq k_i \leq l 2^{j-1} -2$ for $1 \leq i \leq j$, then from Lemma 2 of 
\cite[p.~42]{sato-kimura} 
we can directly obtain $\sum_{i=1}^{j} {k_i}^2 - l \prod_{i=1}^{j} {k_i}^2 \leq 4 j - l 2^j$. 
Combining this inequality with the equation $(\ast)$ implies a contradiction by a similar argument to the case 
$\alpha \geq 2$. Hence we obtain $k_j > l 2^{j-1} -2$. Now we divide the proof into the two cases $j=2$ 
and $j\geq 3$. When $j = 2$, by using the inequality $k_j > 2 l -2$  
we can prove $k_2 > k_1 + \frac{l}{3}$ similarly to the case $\alpha \geq 2$, but not $k_2 > k_1 + \frac{l}{2}$.  
Moreover by using $k_2 > k_1 + \frac{l}{3}$, we can obtain $h_2 < k_2$. 
When $j \geq 3$, by using the inequality $k_j > l 2^{j-1} -2$, we can obtain $k_j > \frac{l}{4}(j-1)k_{j-1}$. 
Moreover if we suppose $h_j \geq k_j$, then 
combining $k_j > \frac{l}{4}(j-1)k_{j-1}$ 
with 
the equation $(\ast)$ yields a contradiction. Thus we obtain $h_j < k_j$.  
\end{proof} 
The proof of Proposition \ref{fund prop of cast 2} is a generalization of the one of 
\cite[Lemma 7.3]{hkato}. 
By Proposition \ref{fund prop of cast 1} and \ref{fund prop of cast 2} we obtain the following: 

\medskip
\begin{thm}\label{determanaiton of successive castling transform} 
Let $l$ and $\alpha$ be positive natural numbers such that $l \geq 3$ and $\alpha \leq l - \alpha$. 
Let $\theta = (k_1, k_2, \ldots, k_j)$ be a set of positive natural numbers. 
Then $\theta$ is obtained by a finite number of 
castling transformations from $\alpha$ if and only if 
$\theta$ gives a solution of the equation 
$(\ast)$:   
$(l-\alpha)\alpha  -(j-1) + k_1^2 +  \cdots + k_j^2 - l k_1 \cdots k_j = 0$ and satisfies 
$k_i \geq  \alpha$ for $1 \leq i \leq j$.  
\end{thm}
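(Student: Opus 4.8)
The plan is to prove the two implications separately, treating the ``only if'' part as essentially formal and concentrating the work on the ``if'' part, which I would handle by a descent on the product of the $k_i$. For the forward implication I would first note that the one-entry set $(\alpha)$ already solves $(\ast)$, since its $j=1$ form reads $(l-\alpha)\alpha + \alpha^2 - l\alpha = 0$, and that, as recorded immediately after the definition of a castling transform, a castling transform sends one positive solution of $(\ast)$ to another positive solution. Hence every $\theta$ obtained from $\alpha$ by finitely many castling transforms is a positive solution of $(\ast)$, and Proposition \ref{fund prop of cast 1} supplies $k_i \geq \alpha$ for all $i$. This direction needs no new computation.

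For the converse I would argue by strong induction on the product $P = k_1 \cdots k_j$, ranging over all positive solutions of $(\ast)$ whose entries all satisfy $k_i \geq \alpha$. Using the permutation invariance of Proposition \ref{successive castling transf of rep} I may assume $k_1 \leq \cdots \leq k_j$, and using the identification $(k_1,\ldots,k_j,1) = (k_1,\ldots,k_j)$ I may delete any entries equal to $1$ (these arise only when $\alpha = 1$, and deleting them preserves $(\ast)$). The base case $j=1$ amounts to the quadratic $k_1^2 - l k_1 + (l-\alpha)\alpha = 0$, whose roots are exactly $k_1 = \alpha$ and $k_1 = l-\alpha$; since $l-\alpha$ is the castling transform of $\alpha$ at the first position, both are reachable from $\alpha$. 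For the inductive step $j \geq 2$ all remaining entries are $\geq 2$, so Proposition \ref{fund prop of cast 2} applies and yields $0 < k_j' < k_j$ for $k_j' := l k_1 \cdots k_{j-1} - k_j$. The castling transform $\theta' = (k_1,\ldots,k_{j-1},k_j')$ at the $j$-th position is again a positive solution, and its product is strictly smaller than $P$ (after deleting $k_j'$ if it equals $1$). Applying the induction hypothesis to $\theta'$ gives that $\theta'$ is reachable from $\alpha$; since castling at a fixed position is an involution, $\theta$ is itself the castling transform of $\theta'$ at the $j$-th position and is therefore reachable as well.

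The delicate point, which I expect to be the main obstacle, is verifying that the descended set $\theta'$ genuinely satisfies the hypotheses of the induction, i.e.\ that after pushing the largest entry down its new value $k_j'$ does not land strictly between $1$ and $\alpha$, so that $\theta'$ (once reduced) again has all entries $\geq \alpha$. The mere sign information $0 < k_j' < k_j$ from Proposition \ref{fund prop of cast 2} is not by itself enough for this; one must exploit the sharper size estimates entering the proof of that proposition, together with the location statement of Lemma \ref{the location of cast}, to rule out the intermediate range $2 \leq k_j' \leq \alpha-1$. Once that is secured, well-foundedness of the descent is automatic, because $P$ is a positive integer that strictly decreases at every step, so the process must terminate at the base case $j=1$, completing the induction.
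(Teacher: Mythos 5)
Your architecture coincides with the paper's: the forward implication from the preservation of $(\ast)$ under castling plus Proposition \ref{fund prop of cast 1}, and the converse by descent on the product $k_1\cdots k_j$ using Proposition \ref{fund prop of cast 2}, reversing the descent because castling at a fixed position is an involution. The paper's own proof is literally the one-line citation ``By Proposition \ref{fund prop of cast 1} and \ref{fund prop of cast 2} we obtain the following,'' so your write-up is, if anything, more explicit; the base case $j=1$, the deletion of entries equal to $1$, and the termination of the descent by strict decrease of the product are all handled correctly.

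The problem is exactly the step you flag and postpone: showing that $k_j'=l k_1\cdots k_{j-1}-k_j$ never lands in $\{2,\ldots,\alpha-1\}$, so that the descended set again has all entries $\geq\alpha$ and the induction hypothesis applies. This is a genuine gap, and the route you sketch for closing it will not work. Lemma \ref{the location of cast} and Proposition \ref{fund prop of cast 1} are statements about sets already known to be obtained from $\alpha$, so invoking them for $\theta'$ before its reachability is established is circular; and the sharper estimates inside the proof of Proposition \ref{fund prop of cast 2} all bound the \emph{largest} entry from below ($k_j>l2^{j-1}-\alpha$, $k_j>\frac{l}{4}(j-1)k_{j-1}$, etc.) and give no lower bound on the new entry $k_j'$ beyond positivity. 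The needed exclusion would follow from the Conjecture at the end of \S 8 (no positive solution of $(\ast)$ with all $k_i\leq\alpha-1$): if some descent step produced $k_j'\in\{2,\ldots,\alpha-1\}$, continuing the descent from $\theta'$ would either reach $j=1$, making $\theta'$ reachable and contradicting Proposition \ref{fund prop of cast 1}, or get stuck at a solution inside the cube $C_\alpha^j$. But that conjecture is stated by the author as open and verified only for $\alpha\leq 3$ or $j=1$. So for $\alpha\in\{1,2\}$ the forbidden interval is empty and your induction closes, for $\alpha=3$ one can appeal to the verified case of the conjecture, and for $\alpha\geq 4$ the step you isolate is not supplied by the cited propositions --- nor, as far as the text shows, by the paper itself.
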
 

\medskip
Furthermore Proposition \ref{fund prop of cast 2} implies a stronger result: suppose that a set of  positive natural numbers 
$\theta$ $=$ ($k_1$, $k_2$, $\ldots$, $k_j$) gives a solution of 
the equation $(\ast)$ and $\theta$ is not contained in the cube 
$C_\alpha^j = \{(x_1 ,\ldots, x_j) \mid |x_i| \leq \alpha - 1\}$.  Then $\theta$ is obtained by a finite number of 
castling transformations from the solution $\alpha$ of $(\ast)$ with $j=1$. 
Thus we obtain the following.

\medskip
\begin{prop} 
Assume that there exists no positive integer solutions $(k_1, \ldots$, $k_j)$  of the equation $(\ast)$ 
satisfying $k_i \leq  \alpha - 1$. 
Then any positive integer solution of $(\ast)$ is obtained by a finite number of castling 
transformations from the solution $\alpha$. 
\end{prop}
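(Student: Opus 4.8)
The plan is to prove reachability by an explicit descent: starting from an arbitrary solution I repeatedly castle down its largest entry, using Proposition \ref{fund prop of cast 2} to force a strictly decreasing integer measure, and invoking the hypothesis to keep the descent inside the range where Proposition \ref{fund prop of cast 2} applies. Since castling at a fixed position is an involution, reversing the descent will exhibit the given solution as a castling transform of $\alpha$.

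First I would put an arbitrary positive integer solution $\theta = (k_1,\dots,k_j)$ into normal form. Using the identification $(k_1,\dots,k_j,1)=(k_1,\dots,k_j)$ I delete every entry equal to $1$; a one-line substitution in $(\ast)$ shows that deleting a $1$ carries a length-$j$ solution to a length-$(j-1)$ solution, so this is legitimate and preserves the value of each remaining entry. Since $(\ast)$ is symmetric in the $k_i$, I may relabel so that $2\le k_1\le\cdots\le k_j$ (the permutation being harmless by Proposition \ref{successive castling transf of rep}(1)), unless the tuple has already collapsed to length one.

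The base case $j=1$ is then immediate: $(\ast)$ reduces to $k_1^2-lk_1+(l-\alpha)\alpha=0$, whose discriminant is the perfect square $(l-2\alpha)^2$ because $2\alpha\le l$, so $k_1=\alpha$ or $k_1=l-\alpha$; both are reachable from $\alpha$, the latter being exactly the castling transform of $\alpha$ at the first position. For $j\ge 2$ I would argue as follows. By the hypothesis no solution lies in the cube $C_\alpha^j$, so in particular $\theta$ does not, and since the entries are sorted its largest entry satisfies $k_j\ge\alpha$. Thus all the hypotheses of Proposition \ref{fund prop of cast 2} hold and it gives $0<h_j<k_j$ with $h_j:=lk_1\cdots k_{j-1}-k_j$. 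Castling $\theta$ at the $j$-th position produces a new solution $\theta'=(k_1,\dots,k_{j-1},h_j)$ whose entry sum $\sum_i k_i$ has strictly decreased (by $k_j-h_j>0$), and any subsequent deletion of $1$'s decreases it further. As the entry sum is a positive integer, the descent $\theta\to\theta'\to\cdots$ terminates after finitely many steps, and it can only terminate once the tuple has length one, i.e. at $\alpha$ or $l-\alpha$. Reversing the chain of (involutive) castlings and $1$-deletions then writes $\theta$ as obtained from $\alpha$ by finitely many castling transformations.

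The point where the global hypothesis is essential — and which I expect to be the real content beyond the bookkeeping — is the no-stalling step. To reapply Proposition \ref{fund prop of cast 2} at each stage I need the current tuple to again have an entry $\ge\alpha$, i.e. to lie outside $C_\alpha^j$; without control on this the descent could in principle enter the cube and stall at a solution of length $\ge 2$ all of whose entries are $<\alpha$. The assumption that \emph{no} solution at all lies in the cube removes this possibility for free, so every tuple met along the descent automatically has $k_j\ge\alpha$ and the descent runs all the way down to $j=1$. This is precisely how the present statement sharpens the remark following Theorem \ref{determanaiton of successive castling transform}: the unconditional "not contained in the cube" claim there is made safe by ruling out cube solutions globally. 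The remaining ingredients — that castling preserves both solutionhood of $(\ast)$ and positivity of the entries — are already recorded in the discussion immediately following the definition of a castling transform.
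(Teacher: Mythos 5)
Your proposal is correct and is essentially the argument the paper intends: the paper derives this proposition from the remark that Proposition \ref{fund prop of cast 2} forces a strict descent $0<h_j<k_j$ on any solution outside the cube $C_\alpha^j$, and the hypothesis that no solution lies in the cube is used exactly as you use it, to guarantee the descent never stalls before reaching the length-one solutions $\alpha$ and $l-\alpha$. Your write-up merely makes explicit the bookkeeping (deleting $1$'s, sorting, the quadratic base case, reversing the involutive castlings) that the paper leaves implicit.
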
 

From many computations of the equation $(\ast)$, it seems that there exists no positive integer solution 
$\theta = (k_1, k_2, \ldots, k_j)$ such that $\theta$ is contained in the cube $C_\alpha^j$. 
Hence we conjecture the following. 

\medskip
\begin{conj} 
There exists no positive integer solutions $(k_1, \ldots, k_j)$ of the equation $(\ast)$ 
satisfying $k_i \leq  \alpha - 1$ for $1 \leq  i \leq j$. 
\end{conj}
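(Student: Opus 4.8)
The plan is to treat $(\ast)$ as a Markov-type equation and attack the conjecture by Vieta jumping combined with a boundary-maximum estimate. First I would normalize the problem. Since a coordinate equal to $1$ can be inserted or deleted without affecting $(\ast)$ (the identification $(k_1,\dots,k_j,1)\sim(k_1,\dots,k_j)$ is compatible with the equation), any hypothetical solution inside the cube $C_\alpha^j$ may be assumed to have $2\le k_i\le\alpha-1$ for every $i$; in particular $\alpha\ge 3$. The case $j=1$ is immediate: writing $(\ast)$ as $f(k_1)=(k_1-\alpha)(k_1-(l-\alpha))$ and using $\alpha\le l-\alpha$, one sees $f>0$ for $1\le k_1\le\alpha-1$, so there is no single-coordinate cube solution. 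Thus the real content is $j\ge 2$.

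Set $f(k_1,\dots,k_j):=(l-\alpha)\alpha+1+\sum_{i=1}^j(k_i^2-1)-l\prod_{i=1}^j k_i$, so that $(\ast)$ reads $f=0$. A short computation shows that on the box $[2,\alpha-1]^j$ the function $f$ is strictly decreasing in each coordinate: decreasing $k_m$ by one changes $f$ by $(2k_m-1)-l\prod_{i\ne m}k_i$, which is negative and of absolute value at least $2\alpha+3$ because $\prod_{i\ne m}k_i\ge 2$ and $l\ge 2\alpha$. Consequently $f$ attains its maximum at the corner $(2,\dots,2)$, where $f(2,\dots,2)=(l-\alpha)\alpha+1+3j-l\,2^{\,j}$. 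Since $(l-\alpha)\alpha\le l^2/4$, this corner value is negative once $2^{\,j}$ exceeds roughly $l/4$; in that regime $f<0$ throughout the box and no cube solution can exist. This already disposes of all but $j=O(\log l)$ values of $j$, and reduces the conjecture to the \emph{small $j$, balanced} regime in which $\sum k_i^2$ and $l\prod k_i$ are comparable.

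For the remaining regime I would run a minimal-counterexample/descent argument. Suppose a cube solution with $j\ge 2$ exists and choose one minimizing $\prod k_i$. Vieta jumping at the largest coordinate replaces $k_j$ by $k_j'=l\,k_1\cdots k_{j-1}-k_j\ge 3\alpha+1$, producing a genuine solution that leaves the cube; the goal is to show this cannot be reconciled with the descent of Propositions \ref{fund prop of cast 2} and \ref{fund prop of cast 1}, which force every castling-reachable solution to have all coordinates $\ge\alpha$. To close the gap one must control $f$ on $[2,\alpha-1]^j$ uniformly in $(l,\alpha,j)$, and for this I would adapt the boundary-maximum technique of \cite[Lemma~2, p.~42]{sato-kimura} that already powers Proposition \ref{fund prop of cast 2}: evaluate $f$ at the extremal configurations $(\underbrace{2,\dots,2}_{\mu},\underbrace{\alpha-1,\dots,\alpha-1}_{j-\mu})$, show $f$ is maximized along the boundary, and squeeze its range so that the integer value $0$ is never attained.

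The hard part will be exactly the balanced configurations, where $\sum k_i^2$ nearly cancels $l\prod k_i$. There the monotonicity and step-size estimates only guarantee that consecutive integer values of $f$ differ by at least $2\alpha+3$; since $f$ is integer-valued this controls the gaps but does not by itself exclude the single value $0$. Excluding it amounts to proving that the castling tree rooted at $\alpha$ is never entered from inside the cube, i.e.\ that no $j\ge 2$ solution sits strictly within $C_\alpha^j$ --- which is the phenomenon of sporadic (non-fundamental) solutions familiar from Markov-type equations, and is precisely why the statement is only conjectured. I expect that settling it requires either a congruence obstruction special to $(\ast)$ or a sharper quantitative form of the Sato--Kimura estimate that is genuinely uniform in all three parameters $l,\alpha,j$.
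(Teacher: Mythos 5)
The statement you were asked to prove is an \emph{open conjecture} in the paper: the author offers no proof, only the remark that it holds for $\alpha = 1,2,3$ or $j=1$, and the surrounding results (the ``stronger result'' following Theorem \ref{determanaiton of successive castling transform} and the conditional proposition after it) lean on it rather than establish it. So there is no proof in the paper to compare yours against; the only question is whether your attempt settles the conjecture, and --- as you concede in your final paragraph --- it does not. To give credit where due, your partial reductions are sound. Deleting coordinates equal to $1$ is compatible with $(\ast)$ (the all-ones tuple must be checked separately: it forces $(l-\alpha)\alpha = l-1$, impossible for $\alpha\ge 2$ under $\alpha\le l-\alpha$); the factorization $k_1^2 - lk_1 + (l-\alpha)\alpha = (k_1-\alpha)\bigl(k_1-(l-\alpha)\bigr)$ disposes of $j=1$; and the coordinatewise monotonicity of $f$ on $[2,\alpha-1]^j$ (your one-step difference $(2k_m-1)-l\prod_{i\ne m}k_i$ is the change under an \emph{increase} of $k_m$, not a decrease, but the direction of your conclusion is right) places the maximum at $(2,\dots,2)$, where $f=(l-\alpha)\alpha+1+3j-l\,2^j$, which is negative once $2^j > l/4 + (3j+1)/l$. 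That eliminates all sufficiently large $j$, uniformly in $\alpha$.

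The genuine gap is exactly the balanced small-$j$ regime, and the mechanism you propose for it cannot close it as stated. Proposition \ref{fund prop of cast 2} yields the descent $0 < lk_1\cdots k_{j-1}-k_j < k_j$ only under the hypothesis $k_j\ge\alpha$, which is precisely what fails inside the cube $C_\alpha^j$; for a hypothetical cube solution the jump at the largest coordinate therefore goes \emph{up} (your bound $k_j'\ge 3\alpha+1$ is correct, since $l\ge 2\alpha$ and $\prod_{i\le j-1}k_i\ge 2$), and the jumped tuple, having its remaining coordinates still $\le\alpha-1$, is not subject to Proposition \ref{fund prop of cast 1}, which constrains only tuples obtained by castling from $\alpha$. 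There is no contradiction to extract: a cube solution would simply generate a sporadic castling orbit disjoint from the tree rooted at $\alpha$, Vieta jumping moves within an orbit and never between orbits, and minimality of $\prod k_i$ among cube solutions is untouched because the jump exits the cube rather than descending inside it. Your own diagnosis --- that one needs either a congruence obstruction special to $(\ast)$ or a Sato--Kimura-type estimate genuinely uniform in $(l,\alpha,j)$ --- is accurate, but neither is supplied. What you have is a correct proof for $j=1$ and for $2^j\gtrsim l/4$, together with an honest research program for the remaining cases; the conjecture itself remains open, exactly as the paper says.
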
 

\medskip
This conjecture is true for $\alpha = 1,2,3$ or $j = 1$. 

\medskip 
Now let $(Q, \omega)$ be a Grassmannian Cartan connection of type $(\beta, \alpha)$ over a manifold $M$, where we 
assume $l = \alpha + \beta \geq  3$ and $\alpha \leq \beta$.  
Denote by $S(l, \alpha)$ the set of positive natural number solutions of the equation ($\ast$).   
We consider the tree $T$ of Cartan connections obtained by  
successive castling transformations from $(Q, \omega)$. 
Each node is written as 
$(Q \times \prod_{i=1}^j PL(k_i), \omega' \times \prod_{i=1}^j (\Lambda_1)_i)$, where $\omega' = \omega$ or 
$*\omega$ and  $(\Lambda_1)_i = \Lambda_1$ or $(\Lambda_1)_i = *\Lambda_1$. 
The model space of $(Q \times \prod_{i=}^j PL(k_i), \omega' \times \prod_{i=1}^j (\Lambda_1)_i)$ is 
$PL(l) \times \prod_{i=1}^j PL(k_i)/PL(l) \times \prod_{i=1}^j PL(k_i)_{w}$, where $w$ is a point of 
$P(\R^l \otimes \bigotimes_{i=1}^j \R^{k_i})$. 
We define the map $\Phi: T \to S(l, \alpha)$ by 
$(Q \times \prod_{i=1}^j PL(k_i), \omega' \times \prod_{i=1}^j (\Lambda_1)_i) \mapsto (k_1, \ldots, k_j)$. 
Each node of $T$ induces a base space $M_{k_1 \times \cdots \times k_j}$ by Proposition 
\ref{base space of successive Cartan conne}. 
Thus we obtain the map $\overline{\Phi}$ from the set of the base spaces of nodes in $T$  
to $S(l, \alpha)$ defined by 
$\overline{\Phi}: M_{k_1 \times \cdots \times k_j} \mapsto (k_1, \ldots, k_j)$. 
Moreover $\overline{\Phi}$ induces the map $\Psi$ from the set of the fibers 
$\prod_{i=1}^j PL(k_i)$ of base spaces of nodes in $T$ to 
$S(l, \alpha)$.  The map $\Psi$ is bijective from Theorem \ref{determanaiton of successive castling transform}.   
Thus we obtain the following.   

\medskip 
\begin{thm}\label{lastthm}
There is a one-to-one correspondence between the set of 
structure groups $\prod_{i=1}^j PL(k_i)$ of the base spaces obtained by a finite number of castling transformations from $(Q, \omega)$  
and the set of solutions 
$(k_1, \ldots, k_j)$  of  the equation 
\[(\ast) \ \alpha \beta + k_1^2 +  \cdots + k_j^2 - (j-1) - (\alpha + \beta) k_1 \cdots k_j = 0.\]   
satisfying $k_i \geq \alpha$ \mbox{\rm(}$1\leq  i \leq j$\mbox{\rm)} and  $j\geq 1$. 

Each solution $(k_1, \ldots, k_j)$ corresponds to a manifold 
equipped with a projective structure, which is projectively flat if 
$(Q, \omega)$ is flat.  
\end{thm}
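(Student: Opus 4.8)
The plan is to reduce this statement to the purely combinatorial bijection of Theorem~\ref{determanaiton of successive castling transform} by matching the tree of Cartan connections with the tree of integer tuples. First I would fix the dictionary between the geometric and combinatorial castling operations. Since $(Q,\omega)$ has type $(\beta,\alpha)$, its model is $Gr_{\alpha,l}=PL(l)/PL(l)_{<v>}$ with $l=\alpha+\beta$ and $<v>$ an $\alpha$-plane, and this seed corresponds to the tuple $(\alpha)$, which is exactly the $j=1$ solution of $(\ast)$. Every node of the tree $T$ has the form $(Q\times\prod_{i=1}^j PL(k_i),\,\omega'\times\prod_{i=1}^j(\Lambda_1)_i)$, and by Propositions~\ref{cast of geo} and \ref{successive castling transf of Cartan} each geometric step---a genuine castling, a permutation of the factors, or the insertion of a $PL(1)$ factor---is mirrored by the corresponding combinatorial castling transform of $(k_1,\ldots,k_j)$; in particular the new integer produced by a castling at the $s$-th factor is exactly $lk_1\cdots k_{s-1}k_{s+1}\cdots k_j-k_s$, matching the definition of a castling transform. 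Thus $\Phi\colon T\to S(l,\alpha)$, sending a node to $(k_1,\ldots,k_j)$, is well defined and intertwines the two tree structures.

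Next I would pass from nodes to base spaces and then to structure groups. By Proposition~\ref{base space of successive Cartan conne} each node induces a base space $M_{k_1\times\cdots\times k_j}$ that is a principal bundle over $M$ with structure group $\prod_{i=1}^j PL(k_i)$, so $\Phi$ descends to a map $\overline{\Phi}$ on base spaces and then to $\Psi$ on structure groups, and it remains to show $\Psi$ is a bijection onto $S(l,\alpha)$. Surjectivity is the deep input: by Theorem~\ref{determanaiton of successive castling transform} every solution $(k_1,\ldots,k_j)$ of $(\ast)$ with $k_i\geq\alpha$ arises from $\alpha$ by finitely many combinatorial castling transforms, and by the dictionary above each such combinatorial sequence lifts to a path in $T$, so the solution is realized by a node and its base space. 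Injectivity is elementary: since $PL(k)\cong PL(k')$ if and only if $k=k'$ for $k,k'\geq 2$ (the Lie algebras $\mathfrak{sl}(k)$ being pairwise non-isomorphic) and since a tuple is identified with the one obtained by deleting its $1$'s, the isomorphism type of $\prod_{i=1}^j PL(k_i)$ recovers the multiset $\{k_i\}$; using the symmetry of $(\ast)$ together with Proposition~\ref{successive castling transf of rep}(1) to absorb reorderings, this determines the solution uniquely. Hence $\Psi$ is bijective.

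Finally I would settle the structural and flatness assertions. By Proposition~\ref{successive castling transf of Cartan} each node is a subgeometry of a projective space $P(\R^l\otimes\otimes_{i=1}^j\R^{k_i})$ (after adjoining a $PL(1)$ factor if necessary), so the analysis of \S\ref{review of geo} and \S3 equips $M_{k_1\times\cdots\times k_j}$ with the induced projective structure $[\widetilde{\chi}]$ of Proposition~\ref{expression of conn}. For flatness I would chain Propositions~\ref{flatness of subgeometry} and \ref{cast of geo}: flatness is preserved both under each castling step and under passage to a subgeometry, so if $(Q,\omega)$ is flat then every node is flat and the induced projective structure is projectively flat.

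The genuine difficulty is not in this assembly but is already discharged in Theorem~\ref{determanaiton of successive castling transform}, which rests on the estimate $0<lk_1\cdots k_{j-1}-k_j<k_j$ of Proposition~\ref{fund prop of cast 2}. Within the present argument the only point requiring real care is to verify that $\Phi$ is an isomorphism of trees---that the geometric castling tree and the combinatorial castling tree coincide---so that the surjectivity established combinatorially transports faithfully to the geometric side.
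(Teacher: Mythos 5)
Your proposal is correct and follows essentially the same route as the paper: the paper likewise defines the map $\Phi$ from the tree $T$ of Cartan connections to $S(l,\alpha)$, descends to $\overline{\Phi}$ on base spaces and $\Psi$ on structure groups via Proposition~\ref{base space of successive Cartan conne}, and deduces bijectivity from Theorem~\ref{determanaiton of successive castling transform}, with flatness handled by Propositions~\ref{flatness of subgeometry} and \ref{cast of geo}. The only difference is that you spell out the injectivity of $\Psi$ (via non-isomorphism of the $\mathfrak{sl}(k)$) and the geometric-to-combinatorial dictionary more explicitly than the paper does, which is a refinement rather than a different argument.
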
 

\medskip
From this theorem we obtain Theorem \ref{cor of thm}.  

\medskip
\begin{rem} 
For a principal fiber bundle $N$ over $M$ corresponding to the solution $(k_1, \ldots, k_j)$, 
a bundle $L= N/{PL(k_j)}$ is equipped with a projective structure again if 
$l k_1 \cdots  k_{j-1} - k_j = 1$ and this manifold corresponds to a solution $(k_1, \cdots, k_{j-1})$ of 
the equation $(\ast)$: $\alpha \beta -(j-2) + k_1^2 + \cdots + k_{j-1}^2 - l k_1 \cdots k_{j-1} = 0$. 
Indeed the set of numbers $(k_1, \ldots, k_{j-1}, l k_1 \cdots k_{j-1} - k_j)$ also gives a solution of $(\ast)$.  
If we have $l k_1 \cdots k_{j-1} - k_j = 1$, then $(k_1, \cdots, k_{j-1}, 1)$ is a solution of $(\ast)$. 
Thus $(k_1, \cdots, k_{j-1})$ is a solution of $(\ast)$.  
If we have $l k_1 \cdots  k_{j-1} - k_j \neq 1$, then $L$ admits a Grassmannian structure of type  
$(l k_1 \cdots  k_{j-1} - k_j, k_j)$, whose corresponding Grassmannian Cartan connection is flat if 
$(Q, \omega)$ is flat.  
\end{rem}
\medskip
\begin{rem} 
The equation ($\ast$) $\alpha \beta + k_1^2 +  \cdots + k_j^2 - (j-1) - (\alpha + \beta) k_1 \cdots k_j  = 0$ is 
a generalization of the equation ($\ast \ast$) 
$a^2 + k_1^2 + \cdots k_j^2 - j - 2 a k_1 \cdots k_j = 0$ ($a = 2$, $3$ or $5$) 
which we obtained in our preceding paper \cite[Theorem1.1]{hkato}.  
Indeed put $\alpha := a-1$ and  $\beta := a + 1$ in ($\ast$). Then we obtain ($\ast \ast$).     
\end{rem}

\section{Description of flat projective structures} 
Let $\{(U_\alpha, \varphi_\alpha)\}_{\alpha \in A}$ be a flat Grassmannian structure on $M$, and 
$(Q, \omega)$ be the corresponding flat Grassmannian Cartan connection on $M$. 
Then by a finite number of castling transformations, we obtain a manifold $N$ 
corresponding to the solution  $(k_1, \ldots, k_j)$ in Theorem \ref{lastthm}. 
The manifold $N$ is equipped with a Cartan connection $(Q \times \prod_{i=1}^j PL(k_i), \omega' \times \prod_{i=1}^j (\Lambda_1)_i)$ of type 
$PL(l) \times \prod_{i=1}^j PL(k_i)/PL(l) \times \prod_{i=1}^j PL(k_i)_w$, where $w$ is an element of 
$V_{k_j, l k_1 \cdots k_{j-1}}$ $\subset$ $P(\R^l \otimes \bigotimes_{i=1}^j \R^{k_i})$ obtained in the process 
of successive castling transformations.      
Then $N$ admits a flat projective structure.   
We also showed that $N$ is a principal fiber bundle over $M$ with group 
$\prod_{i=1}^j PL(k_i)$. 
Furthermore in Proposition \ref{base space of successive Cartan conne} 
we described the relation of the base spaces corresponding to the solutions of $(\ast)$.  
 
Now finally we shall describe the flat projective structure on $N$ by using the flat Grassmannian structure 
$\{(U_\alpha, \varphi_\alpha)\}_{\alpha \in A}$ on $M$.  
For each connected component $C$ of the nonempty intersection $U_\alpha \cap U_\beta$, 
the coordinate change $\varphi_\beta \circ \varphi_\alpha^{-1}|_C$ is given by an element 
$\tau(C; \beta, \alpha)$ of $PL(l)$. 
Let $\pi_Q: Q \to M$ and $\pi: PL(l) \to Gr_{m,l}$ be the projections.
Denote by $\widetilde{U}_\alpha$ the open subset $\pi^{-1}(\phi_\alpha(U_\alpha))$ of $PL(l)$.    
Then $\widetilde{U}_\alpha$ is naturally regarded as a principal fiber bundle over $U_\alpha$ and we 
denote the projection $\widetilde{U}_\alpha \to U_\alpha$ by $\pi_\alpha$. 
Put $\widetilde{U}_\alpha' := \widetilde{U}_\alpha$, $\pi_\alpha' := \pi_\alpha$ and 
$\tau(C; \beta, \alpha)' = \tau(C; \beta, \alpha)$ 
if $\omega' = \omega$ or 
$\widetilde{U}_\alpha' := *\widetilde{U}_\alpha$, $\pi_\alpha' := \pi_\alpha \circ *$ and 
$\tau(C; \beta, \alpha)' = *\tau(C; \beta, \alpha)$ if $\omega' = *\omega$.  

\medskip 
\begin{thm}\label{proj sr on N}
The manifold $N$ is diffeomorphic to a patchwork of the open submanifolds $\widetilde{U}'_\alpha \otimes \bigotimes_{i=1}^j PL(k_i).w$  of the projective space $P(\R^l \otimes \bigotimes_{i=1}^j \R^{k_i})$: 
\vspace{-2.5mm}
\[N \simeq \bigsqcup_{\alpha \in A}\widetilde{U}'_\alpha \otimes \bigotimes_{i=1}^j PL(k_i).w/_\sim. \vspace{-2mm} \]   
The elements $\tilde{g}_{\alpha} = g \otimes A_1 \otimes \cdots \otimes A_j.w$ and 
$\tilde{h}_{\beta} = h \otimes B_1 \otimes \cdots \otimes B_j.w$ of the open submanifolds 
$\widetilde{U}'_\gamma \otimes \bigotimes_{i=1}^j PL(k_i).w$ \mbox{\rm(}$\gamma = \alpha$, $\beta$\mbox{\rm)} 
are identified iff 
\vspace{0mm}
\begin{eqnarray*}
&\mbox{\rm (i)}& \ \pi_\alpha'(g) = \pi_\beta'(h), \\ 
&\mbox{\rm (ii)}& \  \tilde{h}_\beta = 
\tau(C; \beta, \alpha)' \otimes id \otimes \cdots \otimes id \cdot \tilde{g}_\alpha, \ \ \ \ \ 
(\pi_\alpha'(g) \in C).   
\vspace{-1mm}
\end{eqnarray*} 
Thus $N$ admits an atlas inducing a flat projective structure, whose coordinate changes are the same as ones of the flat Grassmannian structure on $M$. 
\end{thm} 
\begin{proof} 
Let $\pi_Q: Q \to M$ and $\pi: PL(l) \to Gr_{m,l}$ be the projections.  We denote by $\omega_G$ the 
Maurer-Cartan form of $PL(l)$.  
Since the Cartan connection $(Q, \omega)$ is constructed from the atlas 
$\{(U_\alpha, \varphi_\alpha)\}_{\alpha \in A}$, 
the Cartan connection $(\pi_Q^{-1}(U_\alpha), \omega)$ is isomorphic to 
the one $(\widetilde{U}_\alpha, \omega_G)$ via an isomorphism 
$\widetilde{\varphi}_\alpha$. 
Let $\widetilde{\varphi}_\beta$ be an isomorphism between $(\pi_Q^{-1}(U_\beta), \omega)$ and 
$(\widetilde{U}_\beta, \omega)$. 
Then the transition function between $\widetilde{\varphi}_\alpha$ and $\widetilde{\varphi}_\beta$ over a 
connected component $C$ of $U_\alpha \cap U_\beta$ is  
given as follows: for an element $z$ of $\pi_Q^{-1}(C)$, 
$\widetilde{\varphi}_\beta(z) = \tau(C; \beta, \alpha) \widetilde{\varphi}_\alpha(z)$.  
  
The manifold $\pi_Q^{-1}(U_\alpha) \times \prod_{i=1}^j PL(k_i)$ itself is an open submanifold of 
$Q \times \prod_{i=1}^j PL(k_i)$ and moreover is the bundle over the open submanifold $\pi_N^{-1}(U_\alpha)$ of $N$, where $\pi_N$ is the projection $N \to M$.  
we define a map $\psi_\alpha: \pi_Q^{-1}(U_\alpha) \times \prod_{i=1}^j PL(k_i) \to PL(l) \times \prod_{i=1}^j PL(k_i)$ by 
$(z, A_1, \ldots, A_j) \mapsto (\widetilde{\varphi}_\alpha{}'(z), A_1', \ldots, A_j')$, 
where $\widetilde{\varphi}_\alpha{}'(z) := \widetilde{\varphi}_\alpha(z)$ and $A_i' = A_i$ if 
$\omega' = \omega$ 
or $\widetilde{\varphi}_\alpha{}'(z) := *\widetilde{\varphi}_\alpha(z)$ and $A_i' = *A_i$ 
if $\omega' = *\omega$. 
Then since ${\widetilde{\varphi}_\alpha}^*\omega_G = \omega_\alpha$,  
the pullback of the Maurer-Cartan form of $PL(l) \times \prod_{i=1}^j PL(k_i)$ by $\psi_\alpha$ 
is equal to 
$\omega'_\alpha \times \prod_{i=1}^j (\Lambda_1)_i$. 
The map $\psi_\alpha$ is compatible with the action $PL(l) \times \prod_{i=1}^j PL(k_i)_w$. 
Since the homogeneous space $PL(l) \times \prod_{i=1}^j PL(k_i)/PL(l) \times \prod_{i=1}^j PL(k_i)_w$  
is identified with the orbit $PL(l) \otimes \bigotimes_{i=1}^j PL(k_i).w$,  
$\psi_\alpha$ induces a map $\bar{\psi}_\alpha$ of the base spaces, which is a diffeomorphism between 
the open submanifold $\pi_N^{-1}(U_\alpha)$ of $N$ and the open submanifold $\widetilde{U}'_\alpha \otimes \bigotimes_{i=1}^j PL(k_i).w$ 
of $P(\R^l \otimes \bigotimes_{i=1}^j \R^{k_i})$. 

On the intersection $\pi_Q^{-1}(C) \times \prod_{i=1}^j PL(k_i)$ the transition function between 
$\psi_\alpha$ and $\psi_\beta$ is given as follows: 
$\psi_\beta \circ \psi_\alpha^{-1} = \tau(C; \beta, \alpha)' \times id \times \cdots \times id$. 
Thus concerning the base spaces the coordinate change 
$\bar{\psi}_\beta \circ \bar{\psi}_\alpha^{-1}$ over 
$\pi_N^{-1}(C)$ is given by $\tau(C; \beta, \alpha)' \otimes id \otimes \cdots \otimes id$.   
Hence the atlas $\{(\pi_N^{-1}(U_\alpha), \bar{\psi}_\alpha)\}_{\alpha \in A}$ of $N$, 
which is induced form the Cartan connection 
$(Q \times \prod_{i=1}^j PL(k_i), \omega' \times \prod_{i=1}^j (\Lambda_1)_i)$, 
naturally gives an atlas of a flat projective structure on $N$ 
via the inclusion $PL(l) \times \prod_{i=1}^j PL(k_i) \to PL(\R^l \otimes \bigotimes_{i=1}^j \R^{k_i})$.   
\end{proof} 

We constructed a flat projective structure $\mathscr{A}$ on $N$ in Theorem \ref{proj sr on N}, which is induced from 
the flat Cartan connection $(Q \times \prod_{i=1}^j PL(k_i), \omega' \times \prod_{i=1}^j (\Lambda_1)_i)$. 
On the other hand $(Q \times \prod_{i=1}^j PL(k_i), \omega' \times \prod_{i=1}^j (\Lambda_1)_i)$ induces 
a flat projective Cartan connection $(P, \xi)$ by Proposition \ref{inducing cartan connection}.  
The Cartan connection $(P, \xi)$ induces a flat projective structure $\mathscr{B}$, which is the same as 
the one stated in Theorem \ref{lastthm} and thus Theorem \ref{cor of thm}. We say that two atlases are equivalent if they are compatible. Finally we shall prove the following:  
\medskip  
\begin{prop} 
The flat projective structures $\mathscr{A}$ and $\mathscr{B}$ on $N$ are equivalent.  
\end{prop} 
\begin{proof} 
Firstly we note that the model space 
$PL(l) \times \prod_{i=1}^j PL(k_i)/PL(l) \times \prod_{i=1}^j PL(k_i)_w$ 
is a subgeometry of 
$PL(\R^l \otimes \bigotimes_{i=1}^j \R^{k_i})/PL(\R^l \otimes \bigotimes_{i=1}^j \R^{k_i})_w$. 
We now generally consider a homogeneous space 
$A/B$ which is a subgeometry of $A'/B'$.  Thus there is the inclusion $F$ of $A$ into $A'$ and 
$F$ induces a diffeomorphism $\hat{F}$ of $A/B$ onto an open subset of $A'/B'$. 
Denote by $\omega_A$ the Maurer-Cartan form of $A$, then $(\pi:A \to A/B, \omega_A)$ gives the standard flat Cartan connection on $A/B$. Likewise we obtain the flat Cartan connection $(\pi':A' \to A'/B', \omega_{A'})$. 
Then we have $dF \circ \omega_A = F^* \omega_{A'}$. 
Let $(Q, \omega)$ (resp. $(Q', \omega')$) be a flat Cartan connection of type $A/B$ (resp. $A'/B'$) on a manifold $N$, and 
assume that $(Q, \omega)$ is a subgeometry of $(Q', \omega')$ defined by a bundle homomorphism 
$\tilde{F}: Q \to Q'$.  We consider an atlas $\mathscr{C}$ of 
$(A, A/B)$-structure on $N$ induced from $(Q, \omega)$ and an atlas $\mathscr{D}$ of $(A', A'/B')$-structure on $N$ induced from $(Q', \omega')$ (See \cite{hkato} for the terminology).  
Hence any chart of $\mathscr{C}$ is constructed as follows: 
For arbitrary point $p \in N$,  
there exists a neighbourhood $U$ of $p$ in $N$ and a bundle isomorphism $f: \pi_Q^{-1}(U) \to V$, where 
$V$ is an open subset of $A$, such that $f^* \omega_A = \omega$. Then $f$ induces a diffeomorphism 
$\bar{f}: U \to \pi(V)$ and we obtain the chart $(U, \bar{f})$ belonging to $\mathscr{C}$. 
Likewise there exists an isomorphism $f': \pi_{Q'}^{-1}(U') \to V'$ around $p$,  where $V'$ is an open subset of $A'$, and $f'$ induces a diffeomorphism $\bar{f'}: U' \to \pi'(V')$. 
Note that $\hat{F} \circ \bar{f}$ gives a diffeomorphism of $U$ onto 
the open subset $\hat{F} \circ \pi(V)$ of $A'/B'$. 
Thus we obtain two charts $(U, \hat{F} \circ \bar{f})$ and $(U', \bar{f'})$ of type $A'/B'$.  

Now we compare the composite $f' \circ \widetilde{F} \circ f^{-1}$ and $F$. 
Since $(Q, \omega)$ is a subgeometry of $(Q', \omega')$, we have the equality 
\begin{equation}\label{eq-form}
(f' \circ \widetilde{F} \circ f^{-1})^* \omega_{A'} = dF \circ \omega_A = F^* \omega_{A'}.
\end{equation}  
Let $C$ be a connected component of $U \cap U'$.  Then $\bar{f}(C)$ is a connected open subset of $A/B$. 
We consider the inverse image $\pi^{-1}(\bar{f}(C))$ and decompose it into the connected components 
$\{D_\gamma \}_{\gamma \in \Gamma}$. 
Since 
there exists a connection in the principal fiber bundle $\pi:A \to A/B$,  
it can be shown that each $D_\gamma$ is mapped onto $\bar{f}(C)$ by $\pi$. 
Moreover from the equality (\ref{eq-form}) there exists a unique element $a_\gamma$ of $A'$ for each $\gamma$  
such that $f' \circ \widetilde{F} \circ f^{-1} = a_\gamma \cdot F$ on $D_\gamma$ 
(cf. Theorem 1.2.4 of \cite{cap-slovak}). 
We can prove that 
all the elements of $\{a_\gamma\}_{\gamma \in \Gamma}$ are the same, and we put $a' := a_\gamma$.  
Thus on $\pi^{-1}(\bar{f}(C))$ we have $f' \circ \widetilde{F} \circ f^{-1} = a' \cdot F$. 
Hence concerning two coordinates $\hat{F} \circ \bar{f}$ and $\bar{f'}$ of type $A'/B'$,  
the coordinate change 
$\bar{f'} \circ (\hat{F} \circ \bar{f})^{-1}$ on $C$ is given by the translation of $a'$. 
Let $\mathscr{C}'$ be an atlas of $(A', A'/B')$-structure on $N$ given rise to by $\mathscr{C}$ via $F$.   
Then from the above discussion it follows that $\mathscr{C}'$ is equivalent to $\mathscr{D}$.  
  
By applying this result to the case that $PL(l) \times \prod_{i=1}^j PL(k_i)/PL(l) \times \prod_{i=1}^j PL(k_i)_w$ 
is a subgeometry of   
$PL(\R^l \otimes \bigotimes_{i=1}^j \R^{k_i})/PL(\R^l \otimes \bigotimes_{i=1}^j \R^{k_i})_w$,  
we obtain the assertion of this proposition.  
\end{proof}

\medskip 
\begin{center}
\bf Acknowledgments
\end{center}

The author expresses his 
gratitude to Thomas Bruun Madsen for his encouragement. 
Thanks are due to the referee for the helpful comments, especially Theorem \ref{proj sr on N} 
was made from the referee's suggestion.  
The author wishes to thank the Osaka City University Advanced Mathematical Institute and 
the King's College London for financial support and hospitality. 


\begin{small}
  \parindent0pt\parskip\baselineskip

  Hironao Kato \\
  Osaka City University Advanced Mathematical Institute, 3-3-138 Sugimoto, Sumiyoshi-ku,  
  Osaka 558-8585 Japan \\
  Department of Mathematics, King's College London, 
  Strand, London WC2R 2LS, United Kingdom. 
  
  \textit{E-mail}: katohiro@sci.osaka-cu.ac.jp
\end{small}
\end{document}